\documentclass[10pt,reqno]{amsart}
\usepackage{floatflt}
\usepackage{xypic}
\usepackage{rotating}
\usepackage{bbm}
\usepackage{mathrsfs}
\usepackage{diagbox}
\usepackage{dutchcal}
\usepackage{extarrows}
\usepackage{cite}
\usepackage{amsfonts} 
\usepackage[dvipsnames,usenames]{color}
\textwidth=13.5cm 
\baselineskip=17pt 
\usepackage{graphicx,latexsym,bm,amsmath,amssymb,verbatim,multicol,lscape}
\makeatletter
\@namedef{subjclassname@2020}{
\textup{2020} Mathematics Subject Classification}
\makeatother
\vfuzz2pt %
\hfuzz2pt %
\newtheorem{theorem}{Theorem} [section]

\newtheorem{conjecture}[theorem]{Conjecture}
\newtheorem{lemma}[theorem]{Lemma}

\numberwithin{equation}{section}

\def\bew{\begin{widetext}}
\def\eew{\end{widetext}}
\def\be{\begin{equation}}
\def\ee{\end{equation}}
\def\bea{\begin{eqnarray}}
\def\eea{\end{eqnarray}}

\allowdisplaybreaks
\begin{document}
\title[Factorization of power GCD and power LCM matrices]
{Factorization of power GCD matrices and power LCM matrices
on certain gcd-closed sets}
\begin{abstract}
For integers $x$ and $y$, $(x, y)$ and $[x, y]$ stand
for the greatest common divisor and the least common multiple
of $x$ and $y$ respectively. Denote by $|T|$ the number
of elements of a finite set $T$. Let $a,b$ and $n$ be
positive integers and let $S=\{x_1, \cdots, x_n\}$ be a set of $n$
distinct positive integers. We denote by $(S^a)$ (resp. $[S^a]$)
the $n\times n$ matrix having the $a$th power of $(x_i,x_j)$
(resp. $[x_i,x_j]$) as its $(i,j)$-entry. For any $x\in S$, define
$G_{S}(x):=\{d\in S: d<x, d|x \ {\rm and} \ (d|y|x, y\in S)
\Rightarrow y\in \{d,x\}\}$. In this paper, we show that if $a|b$
and $S$ is gcd closed (namely, $(x_i, x_j)\in S$ for all integers
$i$ and $j$ with $1\le i, j\le n$) and $\max_{x\in S}\{|G_S (x)|\}=3$
such that any elements $y_1,y_2\in G_S(x)$ satisfy that $[y_1,y_2]=x$
and $(y_1,y_2)\in G_S(y_1)\cap G_S(y_2)$), then $(S^a)|(S^b)$, $(S^a)|[S^b]$
and $[S^a]|[S^b]$ hold in the ring $M_{n}({\mathbb Z})$. This extends
the Chen-Hong-Zhao theorem gotten in 2022. This also partially confirms
a conjecture of Hong raised in [S.F. Hong, Divisibility among power
GCD matrices and power LCM matrices, {\it Bull. Aust. Math. Soc.},
doi:10.1017/S0004972725100361].
\end{abstract}
\author[G.Y. Zhu]{Guangyan Zhu$^*$}
\address{School of Mathematics and Statistics, Hubei Minzu
University, Enshi 445000, P.R. China}
\email{2009043@hbmzu.edu.cn}
\author[Y.Y. Luo]{Yuanyuan Luo}
\address{College of Mathematics and Statistics, Guizhou University of Finance and Economics,
Guiyang 550025, P.R. China}
\email{yuanyuanluoluo@163.com}
\author[J.X. Wan]{Jixiang Wan}
\address{College of Mathematics and Physics, Mianyang Teachers' College,
Mianyang, 621000, P.R. China}
\email{xiangxiangwanli@163.com; 2873555@qq.com}
\thanks{$^*$ G.Y. Zhu is the corresponding author.
This work was supported in part by the Startup Research Fund of Hubei Minzu University for Doctoral Scholars (Grant No. BS25008),
in part by the National Science Foundation of China (Grant No. 12161012),
and in part by the Scientific Research Foundation for the Introduction of Talent in GUFE (Grant No. 2018YJ85).}
\keywords{Divisibility, power GCD matrix, power LCM matrix,
greatest-type divisor, gcd-closed set, condition $\mathcal{G}$.}
\subjclass[2020]{Primary 11C20; Secondary 11A05, 15B36}
\maketitle

\section{Introduction}
Let $(x, y)$ and $[x, y]$ denote the greatest common divisor
and the least common multiple of the integers $x$ and $y$,
respectively. Let $a, b$ and $n$ be positive integers.
Let $|T|$ stand for the cardinality of a finite set $T$ of integers.
Let $f$ be an arithmetic function and let $S=\{x_1, \cdots, x_n\}$ be a
set of $n$ distinct positive integers. Let $(f(S))$ and $(f[S])$
stand for the $n\times n$ matrices having $f((x_i, x_j))$
and $(f[x_i, x_j])$ as its $(i,j)$-entry, respectively.
A set $S$ is called {\it factor closed} if the conditions
$x\in S$ and $d|x$ together with $d>0$ imply that $d\in S$.
We say that $S$ is {\it gcd closed} if $(x_i, x_j)\in S$
for all $i$ and $j$ with $1\le i,j\le n$. Evidently,
any factor closed set is gcd closed but not conversely. In 1875, Smith
\cite{[S-PLMS1875]} published his famous theorem stating that
if $S$ is factor closed, then $\det(f(x_i, x_j))=\prod_{k=1}^n(f*\mu)(x_k)$,
where $f*\mu$ is the {\it Dirichlet convolution} of $f$
and the M\"{o}bius function $\mu$
and is defined for any positive integer $x$ by
$(f*\mu)(x):=\sum_{d|x}f(d)\mu\big(\frac{x}{d}\big).$
Since then lots of generalizations of Smith's determinant
and related results had published (see, for example,
\cite{[AYK-LAA17]}-\cite{[LH-BAMS18]} and \cite{[TL-LAA13]}-
\cite{[ZLX-AIMS2022]}). For any positive integer $x$,
let $\xi_a$ be the arithmetic function defined by $\xi_a(x):=x^a$.
The $n\times n$ matrix $(\xi_a(x_i, x_j))$ (abbreviated
by $(S^a)$) and $(\xi_a[x_i, x_j])$ (abbreviated by
$[S^a]$) are called {\it $a$th power GCD matrix} on
$S$ and {\it $a$th power LCM matrix} on $S$, respectively.
In 1993, Bourque and Ligh \cite{[BL-LMA1993]} extended the
Beslin-Ligh result \cite{[BL-BAMS89]} and Smith's
determinant by proving that if $S$ is gcd closed,
then $\det(S^a)=\prod_{k=1} ^n \alpha_{\xi_a}(x_k)$, where
\begin{align}\label{eq1.1}
\alpha_{\xi_a}(x_k):=\sum_{d|x_k\atop d\nmid x_t,x_t<x_k}
(\xi_a*\mu)(d).
\end{align}
An integer $x\ge 1$ is called {\it square free} if $x$
is not divisible by the square of any prime number.
In 2018, Lin and Hong \cite{[LH-BAMS18]} generalized
Smith's theorem \cite{[S-PLMS1875]} and the Hong-Hu-Lin
theorem \cite{[HHL-AMH16]} by showing that for any integer
$t$ with $1\le t\le n$, if $S$ is factor closed, then
$$\text {det}{(f(x_i, x_j))}_{1\le i,j\le n\atop i\neq t,j\neq t}
=\overset{n}{\underset{l=1\atop x_t|x_l,
\frac{x_l}{x_t} \text {square free}}{\sum}}
\prod\limits_{k=1\atop k\neq l}^n\left(f*\mu\right)(x_k).$$
Besides, the eigen structures of power GCD and power LCM
matrices were explored by Wintner \cite{W}, Bourque and Ligh
\cite{[BL-JNT93]}, Hong and Loewy \cite{[HLo-GMJ04],[HLo-IJNT11]},
Hong and Lee \cite{{[HLe-GMJ08]}}, Altinisik \cite{[A-LAA09]},
Mattila and Haukkanen \cite{[MH]} and Kaarnioja \cite{Ka-JCTA21}.

Let $x,y \in S$ with $x<y$. If $x|y$ and the conditions
$x|d|y$ and $d\in S$ imply that $d\in \{x,y\}$, then
we say that $x$ is a {\it greatest-type divisor} of
$y$ in $S$. For $x\in S$, $G_S(x)$ stands for the set
of all greatest-type divisors of $x$ in $S$. The concept
of greatest-type divisor was introduced by Hong and
played a key role in his solution \cite{[H-JA99]} of
the Bourque-Ligh conjecture \cite{[BL-LAA92]}. Bourque
and Ligh \cite{[BL-LAA95]} showed that
if $S$ is factor closed and $a\ge 1$ is an integer, then the $a$th
power GCD matrix $(S^a)$ divides the $a$th power LCM matrix
$[S^a]$ in the ring $M_n(\mathbb Z)$ of $n\times n$ matrices
over the integers. That is, there exists an $A\in M_n(\mathbb Z)$
such that $[S^a]=(S^a)A$ or $[S^a]=A(S^a)$. Hong \cite{[H-LAA02]}
showed that such a factorization is no longer true in general
if $S$ is gcd closed and $\max_{x \in S}\{|{G_S(x)}|\}=2$.
Korkee and Haukkanen \cite{[KH-LAA08]} and Chen et al.
\cite{[CFHQ-PMD22]} extended the result of Bourque and Ligh
and that of Hong. A characterization on the gcd-closed set $S$
with $\max_{x\in S}\{|G_S(x)|\}=2$ (resp. $\max_{x\in S}\{|G_S(x)|\}=3$)
such that $(S^a)|[S^a]$ holds in the ring $M_{|S|}(\mathbb Z)$ was
given in \cite{[FHZ-DM2009]} (resp. \cite{[ZCH-JCTA2022]}).

On the other hand, Hong \cite{[H-LAA08]} initially investigated
the divisibility properties among power GCD matrices and among
power LCM matrices. It was proved in \cite{[H-LAA08]} that
$(S^a)|(S^b), (S^a)|[S^b]$ and $[S^a]|[S^b]$ hold in the ring
$M_{n}(\mathbb Z)$ if $a|b$ and $S$ is a divisor chain (that is,
$x_{\sigma(1)}|\cdots|x_{\sigma(n)}$ for a permutation $\sigma$ of
$\{1,\cdots, n\}$), and such factorizations are no longer true if
$a\nmid b$ and $|S|\ge 2$. Evidently, a divisor chain is gcd
closed but the converse is not true. Recently, Hong \cite{[H-BAMS25]}
established the same divisibility result when $S$ is factor closed.
As in \cite{[H-BAMS25]}, for any set $S$ of positive integers and for
any $x\in S$ with $|G_S(x)|\ge 2$, we say that the two distinct greatest-type
divisors $y_1$ and $y_2$ of $x$ in $S$ {\it satisfy the condition
$\mathcal{G}$} if $[y_1,y_2]=x$ and $(y_1,y_2)\in G_S(y_1)\cap G_S(y_2)$.
We say that $x$ {\it satisfies the condition $\mathcal{G}$} if any two
distinct greatest-type divisors of $x$ in $S$ satisfy the condition
$\mathcal{G}$. Moreover, we say that a set $S$ of positive integers
{\it satisfies the condition $\mathcal{G}$} if any element $x$ in $S$
satisfies that either $|G_S(x)|\le 1$, or $|G_S(x)|\ge 2$ and $x$ satisfies
the condition $\mathcal{G}$. In \cite{[H-BAMS25]}, Hong showed if
$a|b$ and $S$ is a factor closed set, then $(S^a)|(S^b), (S^a)|[S^b]$ and
$[S^a]|[S^b]$ hold in the ring $M_{n}(\mathbb Z)$. Meanwhile, Hong
\cite{[H-BAMS25]} proved that any factor-closed set is a gcd-closed
set satisfying the condition $\mathcal{G}$. Furthermore, Hong proposed
the following conjecture.

\begin{conjecture}{\rm {\cite[Conjecture 3.4]{[H-BAMS25]}}} \label{p1.1}
Let $a$ and $b$ be positive integers with $a|b$
and let $S$ be a gcd-closed set satisfying the condition $\mathcal G$. Then
in the ring $M_{|S|}(\mathbb Z)$, we have $(S^a)|(S^b)$, $(S^a)|[S^b]$ and
$[S^a]|[S^b]$.
\end{conjecture}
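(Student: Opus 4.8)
The plan is to reduce all three divisibilities to the multiplicative structure of a single, $a$-independent unimodular factorization. Order $S=\{x_1,\dots,x_n\}$ by magnitude, so that $x_i\mid x_j$ with $i\ne j$ forces $i<j$ and each initial segment $\{x_1,\dots,x_i\}$ is again gcd closed, and let $E=(E_{kj})$ be the zeta matrix of the poset $(S,\mid)$, i.e.\ $E_{kj}=1$ if $x_k\mid x_j$ and $0$ otherwise. Then $E$ is unit upper triangular, so $E,E^{-1}\in M_n(\mathbb Z)$ with $(E^{-1})_{kj}=\mu_S(x_k,x_j)$ the M\"obius function of $(S,\mid)$. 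M\"obius inversion on $(S,\mid)$ gives the meet-matrix factorization $(S^a)=E^{\mathrm T}\Lambda_a E$, where $\Lambda_a=\mathrm{diag}\big(\alpha_{\xi_a}(x_1),\dots,\alpha_{\xi_a}(x_n)\big)$ and the pivots coincide with the Bourque--Ligh quantities in \eqref{eq1.1}; crucially $E$ does not depend on $a$. Since $(S^a)^{-1}(S^b)=E^{-1}\big(\Lambda_a^{-1}\Lambda_b\big)E$ with $E,E^{-1}$ integral, the divisibility $(S^a)\mid(S^b)$ follows once $\alpha_{\xi_a}(x_k)\mid\alpha_{\xi_b}(x_k)$ for every $k$. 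The least common multiple matrix is handled through the reciprocal meet matrix: writing $D^{(a)}=\mathrm{diag}(x_1^a,\dots,x_n^a)$ and $(S^{-a})$ for the matrix with $(i,j)$-entry $(x_i,x_j)^{-a}$, one has $[S^a]=D^{(a)}(S^{-a})D^{(a)}$ and $(S^{-a})=E^{\mathrm T}\Lambda_{-a}E$ with $\Lambda_{-a}=\mathrm{diag}\big(\alpha_{\xi_{-a}}(x_k)\big)$.

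The combinatorial heart is an exact product formula for the pivots under the condition $\mathcal G$. For any gcd-closed $S$ and any $x_k$, a divisor $d\mid x_k$ divides some element of $S$ smaller than $x_k$ if and only if $d\mid y$ for some $y\in G_S(x_k)$ (since then $d\mid(x_k,x_t)\in S$, a proper divisor of $x_k$, hence $d\mid y$ for a greatest-type divisor $y$); inclusion--exclusion over the sets $\{d:d\mid y\}$ then yields, for every exponent $s$,
\[ \alpha_{\xi_s}(x_k)=\sum_{T\subseteq G_S(x_k)}(-1)^{|T|}\Big(\textstyle\bigwedge_{y\in T}y\Big)^{s}, \]
where $\bigwedge_{y\in T}y$ is the greatest common divisor of the chosen greatest-type divisors and the empty meet is $x_k$. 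The first clause of condition $\mathcal G$, namely $[y_i,y_j]=x_k$ for all distinct $y_i,y_j\in G_S(x_k)$, forces the complementary parts $m_{k,i}:=x_k/y_i$ to be pairwise coprime (comparing $p$-adic valuations, $\min(v_p(m_{k,i}),v_p(m_{k,j}))=0$). Coprimality gives $\bigwedge_{i\in T}y_i=x_k/\prod_{i\in T}m_{k,i}$, so the sum telescopes into
\[ \alpha_{\xi_s}(x_k)=x_k^{s}\prod_i\big(1-m_{k,i}^{-s}\big)=w_k^{\,s}\prod_i\big(m_{k,i}^{\,s}-1\big),\qquad w_k:=x_k\big/\!\prod_i m_{k,i}. \]
For $a\mid b$ we have $w_k^a\mid w_k^b$ and $(m_{k,i}^a-1)\mid(m_{k,i}^b-1)$, whence $\alpha_{\xi_a}(x_k)\mid\alpha_{\xi_b}(x_k)$ for every $k$. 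This settles $(S^a)\mid(S^b)$ in full generality, with no restriction on $\max_{x\in S}|G_S(x)|$.

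For the two remaining factorizations I would feed the same product formula (also at $s=-a$, giving $\alpha_{\xi_{-a}}(x_k)=x_k^{-a}\prod_i(1-m_{k,i}^a)$) into the reciprocal description of $[S^a]$. Expanding $(S^a)^{-1}[S^b]=E^{-1}\Lambda_a^{-1}(E^{\mathrm T})^{-1}D^{(b)}E^{\mathrm T}\Lambda_{-b}E D^{(b)}$, and $[S^a]^{-1}[S^b]$ analogously, one checks that several blocks are automatically integral: $(E^{\mathrm T})^{-1}D^{(b)}E^{\mathrm T}$ and $(E^{\mathrm T})^{-1}D^{(b-a)}E^{\mathrm T}$ are integer conjugates of integer diagonal matrices, while $\Lambda_{-b}E D^{(b)}$ is integral because its $(k,j)$-entry is nonzero only when $x_k\mid x_j$, in which case the factor $x_k^{-b}$ in $\alpha_{\xi_{-b}}(x_k)$ is cancelled by $(x_j/x_k)^b\in\mathbb Z$. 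After these cancellations, the integrality of the cofactor collapses to divisibilities of the form $\prod_i(m_{k,i}^a-1)=\alpha_{\xi_a}(x_k)/w_k^a$ dividing an explicit integer combination indexed by the chains of greatest-type divisors below $x_k$.

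The main obstacle is exactly this last divisibility in the LCM cases: unlike the GCD case, the $a$-dependent diagonal $D^{(a)}$ sits \emph{outside} the unimodular factor $E$, so the clean conjugation-by-a-unimodular-integer-matrix argument no longer applies and the pivot denominators $\prod_i(m_{k,i}^a-1)$ do not visibly clear. Here I expect to need the full strength of condition $\mathcal G$, in particular its second clause that $(y_i,y_j)\in G_S(y_i)\cap G_S(y_j)$, which propagates the coprime-complement structure to every level of the poset. Concretely, I would argue by induction on $|S|$, removing a maximal element $x_n$ (which leaves a gcd-closed set still satisfying $\mathcal G$) and using a Schur-complement step whose pivot at $x_n$ equals $\pm\,x_n^a\prod_i(m_{n,i}^a-1)$ and hence divides its $b$-analogue; the delicate point is to verify that the last column of $[S^b]$ is an integer combination of the columns of $[S^a]$. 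The book-keeping for this is precisely what the hypothesis $\max_{x\in S}|G_S(x)|\le 3$ made tractable in the earlier work, and what must now be carried out uniformly for arbitrarily many greatest-type divisors.
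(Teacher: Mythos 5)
You have proved one third of the conjecture, and honestly flagged that the other two thirds are missing; let me separate the two. The first half of your proposal is correct and proceeds by a genuinely different route from the paper. The factorization $(S^a)=E^{\mathrm T}\Lambda_a E$ over the zeta matrix of $(S,\mid)$ is the classical Beslin--Ligh/Bourque--Ligh structure theorem for gcd-closed sets, your inclusion--exclusion identity for $\alpha_{\xi_s}(x_k)$ is the paper's Lemma~\ref{Lemma 2.5}, and your decisive observation checks out: the clause $[y_i,y_j]=x_k$ of condition $\mathcal G$ forces the complements $m_{k,i}=x_k/y_i$ to be pairwise coprime, so subset meets multiply and $\alpha_{\xi_s}(x_k)=w_k^s\prod_i\big(m_{k,i}^s-1\big)$ with $w_k=x_k/\prod_i m_{k,i}\in\mathbb Z$; for $a\mid b$ this yields $\alpha_{\xi_a}(x_k)\mid\alpha_{\xi_b}(x_k)$, and conjugating the integral diagonal quotient by the unimodular $E$ gives $(S^a)\mid(S^b)$ with \emph{no} bound on $\max_{x\in S}|G_S(x)|$. (For $|G_S(x_m)|=3$ your product formula is exactly the factorization obtained inside the proof of Lemma~\ref{Lemma 2.14}, with $w_k=x_{m_4}$.) This is stronger, for the GCD--GCD divisibility, than what the paper proves: the paper handles only $\max_{x\in S}|G_S(x)|=3$ (Theorems~\ref{Theorem 1.2} and~\ref{Theorem 1.3}) and does so entrywise, via $f(l,m)$ in \eqref{eq3.1} and the four-case analysis of Lemma~\ref{Lemma 2.17}; your global conjugation argument bypasses that case analysis entirely. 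Bear in mind the statement you were given is an open conjecture which the paper itself only partially confirms, so a complete blind proof was not to be expected.

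The genuine gap is in the two LCM divisibilities, and it is exactly where you say it is. Splitting $(S^a)^{-1}[S^b]=E^{-1}\Lambda_a^{-1}(E^{\mathrm T})^{-1}D^{(b)}E^{\mathrm T}\Lambda_{-b}ED^{(b)}$ into individually integral blocks proves nothing, because $\Lambda_a^{-1}$ sits between them and commutes with neither factor: once the diagonals $D^{(b)}$ lie outside the unimodular conjugation, pivot divisibility no longer implies matrix divisibility. What must actually be established is the entrywise integrality of the cross terms --- the paper's $g(l,m)$ in \eqref{eq3.2} and $h(l,m)$ in \eqref{eq3.3} --- for \emph{every} $x_l\in S$, and this is where the paper spends essentially all of its effort: the case analysis on the position of $(x_l,x_m)$ relative to the $x_{m_i}$ and $x_{m_{ij}}$ in Lemmas~\ref{Lemma 2.17} and~\ref{Lemma 2.18}, driven by Lemma~\ref{Lemma 2.10} and Lemma~\ref{Lemma 2.16}, which need the second clause $(y_i,y_j)\in G_S(y_i)\cap G_S(y_j)$ of condition $\mathcal G$ to produce lcm identities such as $[x_l,x_{m_1}]=[x_l,x_{m_{13}}]$. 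Your Schur-complement/induction sketch supplies no substitute: knowing that the pivot $\pm\,x_n^a\prod_i\big(m_{n,i}^a-1\big)$ divides its $b$-analogue does not show that the relevant column of $[S^b]$ is an integral combination of columns of $(S^a)$ or $[S^a]$, and the ``book-keeping \dots for arbitrarily many greatest-type divisors'' you defer is precisely the unresolved content of the conjecture. Net assessment: a correct and more general proof of $(S^a)\mid(S^b)$, but $(S^a)\mid[S^b]$ and $[S^a]\mid[S^b]$ remain a programme, not a proof.
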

By Zhu's theorem \cite{[Z-IJNT22]} and the Zhu-Li result \cite{[ZL-BAMS22]},
one knows that this conjecture is true when $a|b$ and $S$ is a gcd-closed set
with $\max_{x\in S}\{|G_S (x)|\}=1$. Also Conjecture \ref{p1.1} is proved by
Wan and Zhu \cite{[WZ]} when $a|b$ and $S$ is a gcd-closed set satisfying the
condition $\mathcal{G}$ and $\max_{x\in S}\{|G_S (x)|\}=2$. But Conjecture
\ref{p1.1} is still kept open when $S$ is a gcd-closed set satisfying the
condition $\mathcal{G}$ and $\max_{x\in S}\{|G_S (x)|\}\ge 3$. One remarks
that Wan and Zhu \cite{[WZ]} showed the existences of gcd-closed sets $S$
with $\max_{x\in S}\{|G_{S}(x)|\}=2$ and the condition $\mathcal{G}$ not
being satisfied and infinitely many integers $b\ge 2$ such that $(S)\mid (S^b)$
(resp. $(S)\mid [S^b]$ and $[S]\mid [S^b]$).

In this paper, our main goal is to explore Conjecture \ref{p1.1} for the case
$\max_{x\in S} \{|G_S(x)|\}=3$. Actually, we will show that Conjecture \ref{p1.1}
is true if $a|b$ and $S$ is a gcd-closed set with $\max_{x\in S}\{|G_S(x)|\}=3$
and the condition $\mathcal{G}$ being satisfied. That is, we have the following
main results of this paper.

\begin{theorem}\label{Theorem 1.2}
Let $S$ be a gcd-closed set satisfying the condition $\mathcal G$ and $\max_{x\in S}
\{|G_S(x)|\}=3$ and let $a$ and $b$ be positive integers with $a|b$. Then the $a$-th
power GCD matrix $(S^a)$ divides each of the $b$-th power GCD matrix $(S^b)$ and the
$b$-th power LCM matrix $[S^b]$ in the ring $M_{|S|}({\mathbb Z})$.
\end{theorem}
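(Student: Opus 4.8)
The plan is to reduce both asserted divisibilities to integrality statements about quotient matrices. Every power GCD matrix on a set of distinct positive integers is positive definite, so in particular $(S^a)$ is invertible over $\mathbb{Q}$; hence $(S^a)\mid(S^b)$ and $(S^a)\mid[S^b]$ in $M_{|S|}(\mathbb{Z})$ are equivalent to the statements that the rational matrices $(S^a)^{-1}(S^b)$ and $(S^a)^{-1}[S^b]$ have all their entries in $\mathbb{Z}$. I would fix a linear extension $x_1,\dots,x_n$ of the divisibility order on $S$ and work with these two quotients throughout.

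The engine of the argument is a triangular factorization of the power matrices whose pivots are the Bourque--Ligh quantities $\alpha_{\xi_c}(x_k)$ of \eqref{eq1.1}, read off against the poset of greatest-type divisors, together with a single arithmetic input that encodes the hypothesis $a\mid b$. Writing $J_c:=\xi_c*\mu$ for the Jordan-type totient, a computation on prime powers gives $J_a(p^m)\mid J_b(p^m)$ because $p^{a}-1\mid p^{b}-1$ and $p^{a}\mid p^{b}$, whence $J_a(d)\mid J_b(d)$ for every $d$ by multiplicativity; this is the only point at which $a\mid b$ is used. Expressing $(S^c)^{-1}$ through this factorization, the $(i,j)$-entry of $(S^a)^{-1}(S^b)$ (and likewise of $(S^a)^{-1}[S^b]$) is a rational combination of the numbers $\alpha_{\xi_a}(x_k)$ in the denominators and of the values $J_b(d)$ and gcd-powers in the numerators. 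The role of the condition $\mathcal{G}$ with $\max_{x\in S}|G_S(x)|=3$ is to force the local shape of $S$ around each $x$ to be Boolean-lattice-like: the greatest-type divisors of $x$ behave as its coatoms, their pairwise gcds again lie in $S$ as shared greatest-type divisors, and (applying $\mathcal{G}$ one level down) the lcm of any two such pairwise gcds recovers the coatom between them. This is precisely what makes the factorization align across the two levels $a$ and $b$, so that the comparison reduces to the clean factor-closed situation treated by Hong in \cite{[H-BAMS25]}.

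With this reduction in hand, the integrality of the two quotients splits into a local verification at each $x\in S$ according to the value of $|G_S(x)|\in\{1,2,3\}$. The cases $|G_S(x)|\le1$ and $|G_S(x)|=2$ are covered by Zhu's theorem \cite{[Z-IJNT22]}, the Zhu--Li result \cite{[ZL-BAMS22]}, and the Wan--Zhu theorem \cite{[WZ]}, which I would invoke for the corresponding blocks. The genuinely new input is the block attached to an element $x$ with three greatest-type divisors $y_1,y_2,y_3$ satisfying the condition $\mathcal{G}$, that is, $[y_i,y_j]=x$ and $z_{ij}:=(y_i,y_j)\in G_S(y_i)\cap G_S(y_j)$ for all $i\neq j$. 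Since $S$ is gcd closed, the element $(z_{12},z_{13})=(y_1,y_2,y_3)$ also lies in $S$, so the eight elements $x$, $y_1,y_2,y_3$, $z_{12},z_{13},z_{23}$ and $(y_1,y_2,y_3)$ form a complete Boolean sublattice inside $S$. For this configuration I would compute $\alpha_{\xi_a}$ and $\alpha_{\xi_b}$ on these eight elements and show that the corresponding entries of both quotient matrices are integer combinations of the values $J_a(d)$ and $J_b(d)$, with integrality delivered by the divisibilities $J_a(d)\mid J_b(d)$ recorded above.

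The main obstacle is exactly this size-three block. Because each pairwise gcd $z_{ij}$ is simultaneously a greatest-type divisor of two of the $y_i$, there are cross terms among $y_1,y_2,y_3$ with no analogue in the divisor-chain or size-two settings, and the off-diagonal entries of the quotients in this block take the form of alternating sums over the Boolean sublattice generated by $x$ and its three coatoms. Proving that these alternating sums are honest integers, rather than rationals with merely bounded denominators, is the heart of the matter; it is here that the full force of the condition $\mathcal{G}$ enters, guaranteeing that the entire Boolean sublattice is present in $S$ so that the telescoping over it is exact, which together with $J_a(d)\mid J_b(d)$ yields integrality. Once the size-three block is settled, reassembling the local integrality statements along the chosen linear extension establishes that $(S^a)^{-1}(S^b)$ and $(S^a)^{-1}[S^b]$ lie in $M_{|S|}(\mathbb{Z})$, completing the proof.
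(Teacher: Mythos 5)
Your high-level skeleton does agree with the paper's: both reduce the divisibilities to integrality of the quotient matrices via the Bourque--Ligh inverse formula for gcd-closed sets, split the verification according to $|G_S(x_m)|\in\{1,2,3\}$, and invoke Zhu, Zhu--Li and Wan--Zhu for the sizes $1$ and $2$. But at the only place where new work is required --- the size-three block --- your proposal has a genuine gap: the mechanism you name, term-wise divisibility $(\xi_a*\mu)(d)\mid(\xi_b*\mu)(d)$, cannot deliver the integrality. In the gcd-closed setting the pivot is $\alpha_{\xi_a}(x_m)=\sum_{d\mid x_m,\, d\nmid x_t,\, x_t<x_m}(\xi_a*\mu)(d)$, a sum over possibly many divisors $d$, and term-wise divisibility of summands does not pass to sums: already $(\xi_1*\mu)(2)+(\xi_1*\mu)(3)=3$ does not divide $(\xi_2*\mu)(2)+(\xi_2*\mu)(3)=11$, even though $1\mid 3$ and $2\mid 8$. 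This is exactly the difference between the factor-closed case, where $\alpha_{\xi_a}(x_k)=(\xi_a*\mu)(x_k)$ is a single Jordan-totient value and Hong's term-wise argument in \cite{[H-BAMS25]} works, and the present case; for the same reason your claimed ``reduction to the clean factor-closed situation'' is unjustified, since the local eight-element Boolean configuration need not be factor closed (take $S=\{7k: k\mid 30\}$, where $\alpha_{\xi_a}(210)=(\xi_a*\mu)(30)+(\xi_a*\mu)(210)$ is a genuine two-term sum), and no such reduction exists in the paper.

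What is actually needed, and what the paper proves (its Lemma \ref{Lemma 2.14}), is a multiplicative factorization of the alternating sum: condition $\mathcal G$ forces the identities $x_{m_{12}}=x_{m_1}x_{m_2}/x_m$, $x_{m_{13}}=x_{m_4}x_m/x_{m_2}$, $x_{m_{23}}=x_{m_4}x_m/x_{m_1}$ and $x_{m_3}=x_{m_4}x_m^2/(x_{m_1}x_{m_2})$, whence $\alpha_{\xi_a}(x_m)=\big((x_m/x_{m_2})^a-1\big)\big((x_m/x_{m_1})^a-1\big)\big((x_{m_1}x_{m_2}/x_m)^a-x_{m_4}^a\big)$, so that $\alpha_{\xi_b}(x_m)/\alpha_{\xi_a}(x_m)$ is a product of three quotients of the shape $(u^b-v^b)/(u^a-v^a)$, each an integer because $a\mid b$. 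Moreover, integrality of an entry indexed by $(l,m)$ requires controlling the numerators $\sum_{x_r\mid x_m}c_{rm}(x_l,x_r)^b$ and $\sum_{x_r\mid x_m}c_{rm}[x_l,x_r]^b$ for \emph{every} $x_l$, not just recognizing the Boolean sublattice; in the paper this takes the explicit values of $c_{rm}$ (Lemma \ref{Lemma 2.7}) plus a four-case analysis on where $(x_l,x_m)$ sits in the local lattice (dividing $x_{m_4}$; dividing exactly one $x_{m_{ij}}$; dividing exactly one $x_{m_i}$; or $x_m\mid x_l$), with the lcm identities supplied by Lemma \ref{Lemma 2.10} forcing those sums either to vanish or to factor through the displayed product. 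None of this appears in your sketch; the step you yourself call the heart of the matter is asserted, not proved, and the tool you propose for it is demonstrably too weak.
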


\begin{theorem}\label{Theorem 1.3}
Let $S$ be a gcd-closed set satisfying the condition $\mathcal G$ and $\max_{x\in S}
\{|G_S(x)|\}=3$ and let $a$ and $b$ be positive integers with $a|b$.
Then the $a$-th power LCM matrix $[S^a]$ divides the $b$-th power LCM
matrix $[S^b]$ in the ring $M_{|S|}({\mathbb Z})$.
\end{theorem}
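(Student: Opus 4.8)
The plan is to reduce the general statement $[S^a]\mid[S^b]$ to the case $a=1$ and then to analyse a power LCM quotient matrix whose integrality is controlled, element by element, by the local structure imposed by condition $\mathcal{G}$. First I would pass from $S$ to $S^{(a)}:=\{x_1^a,\dots,x_n^a\}$. Because $(x_i^a,x_j^a)=(x_i,x_j)^a$ and $[x_i^a,x_j^a]=[x_i,x_j]^a$, the set $S^{(a)}$ is again gcd closed, its divisibility poset is order-isomorphic to that of $S$, and it inherits condition $\mathcal{G}$ together with $\max_{x}\{|G_{S^{(a)}}(x)|\}=3$. Writing $c=b/a$, one checks $[S^a]=[S^{(a)}]$ and $[S^b]=[(S^{(a)})^{c}]$, so it suffices to prove $[T]\mid[T^c]$ for an arbitrary gcd-closed $T$ satisfying $\mathcal{G}$ with $\max_x\{|G_T(x)|\}=3$ and every integer $c\ge1$.

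For orientation I would first record the GCD analogue, since it fixes the key local computation. The Bourque--Ligh factorization behind \eqref{eq1.1} reads $(T^c)=E\,\Delta_c\,E^{\mathsf T}$, where $E=([x_j\mid x_i])_{i,j}$ is the unit lower-triangular (hence unimodular) divisor-incidence matrix of $T$ and $\Delta_c=\mathrm{diag}(\alpha_{\xi_c}(x_k))$. An inclusion-exclusion over $G_T(x)$ gives $\alpha_{\xi_c}(x)=x^c+\sum_{\emptyset\neq I\subseteq G_T(x)}(-1)^{|I|}\big(\gcd_{y\in I}y\big)^c$; condition $\mathcal{G}$ forces $[y_i,y_j]=x$, so the cofactors $a_i:=x/y_i$ are pairwise coprime and this collapses to $\alpha_{\xi_c}(x)=w^c\prod_i(a_i^c-1)$ with $w:=x/\prod_i a_i$. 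Hence $(T^c)(T)^{-1}=E\,\Lambda\,E^{-1}$ with $\Lambda=\mathrm{diag}(\lambda_{x_k})$ and $\lambda_x=\alpha_{\xi_c}(x)/\alpha_{\xi_1}(x)=w^{c-1}\prod_i(1+a_i+\dots+a_i^{c-1})\in\mathbb{Z}$; as $E,E^{-1}$ are integral, this already yields $(T)\mid(T^c)$.

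For the LCM matrix I would start from $[T^c]=D_c\,G_c\,D_c$, where $D_c=\mathrm{diag}(x_k^c)$ and $G_c=((x_i,x_j)^{-c})$, and observe that $G_c$ admits the same kind of factorization $G_c=E\,\Gamma_c\,E^{\mathsf T}$ with a rational diagonal $\Gamma_c$ built from $\xi_{-c}$. Substituting into $[T]^{-1}[T^c]$ reduces $[T]\mid[T^c]$ to the integrality of an explicit matrix of the shape $D_1^{-1}(E^{\mathsf T})^{-1}\Gamma_1^{-1}\big(E^{-1}D_{c-1}E\big)\Gamma_c E^{\mathsf T}D_c$, where $D_{c-1}=\mathrm{diag}(x_k^{c-1})$. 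The essential difficulty, and the place where I expect the real work to concentrate, is that $D_{c-1}$ does not commute with $E$: because $T$ is meet closed but in general not join closed, the power LCM matrix does not diagonalise over $\mathbb{Z}$ through $E$, so unlike the GCD quotient this product cannot be rewritten as a conjugated diagonal. One is therefore forced to prove integrality of the quotient entry by entry, and each entry localises to an interval $[x_j,x_i]$ of the poset as a Möbius-weighted combination of the local data of the elements lying in it.

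The hardest entries are those attached to an element $x$ with $|G_T(x)|=3$, and this is exactly where condition $\mathcal{G}$ and the hypothesis $\max_x\{|G_T(x)|\}=3$ are used. Condition $\mathcal{G}$ forces the three greatest-type divisors $y_1,y_2,y_3$ to have pairwise coprime cofactors, pairwise meets $(y_i,y_j)\in G_T(y_i)\cap G_T(y_j)$, and a common meet $w$, so that the divisors of $x$ lying in $T$ form a scaled Boolean lattice $B_3$ with completely explicit Möbius function. Feeding this configuration into the entry formulas and carrying out the same cyclotomic-type factorisations as above, now while tracking the diagonal factors $D_c$, should reduce every entry to a product of integers of the forms $(t^c-1)/(t-1)$ and $w^{c-1}$, yielding integrality and hence $[T]\mid[T^c]$; the companion statement $(T)\mid[T^c]$ of Theorem \ref{Theorem 1.2} is obtained along the same lines. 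The bound $\max_x\{|G_T(x)|\}=3$ keeps the list of local $B_3$-configurations finite, so the main obstacle is the combinatorial bookkeeping of these configurations for the LCM quotients rather than any single hard inequality.
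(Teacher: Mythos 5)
Your reduction to $a=1$ via $S^{(a)}=\{x_1^a,\dots,x_n^a\}$ is valid (the map $u\mapsto u^a$ preserves divisibility, gcds and lcms, hence greatest-type divisors, condition $\mathcal G$, and $\max_x|G_S(x)|=3$), and your conjugation identity $(T^c)(T)^{-1}=E\,\Delta_c\Delta_1^{-1}E^{-1}$ is a genuinely cleaner route to the GCD-divides-GCD statement than the paper's entry-wise argument; your local formula $\alpha_{\xi_c}(x)=w^c\prod_i(a_i^c-1)$ is correct under condition $\mathcal G$. Your setup for the LCM quotient is precisely the paper's framework: the inverse formula the paper uses (its Lemma 2.9) is exactly your factorization $[T]=D_1E\Gamma_1E^{\mathsf T}D_1$, with the paper's coefficients $c_{ij}$ playing the role of the entries of your $E^{-1}$, and the paper likewise concludes that integrality must be proved entry by entry.

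The problem is that your proposal stops exactly where the proof of Theorem 1.3 begins. Up to the integer weights $c_{sm}$, the $(l,s)$ entry of the quotient is a sum of the quantities
\begin{equation*}
h(l,m)=\frac{1}{x_s\,\alpha_{\frac{1}{\xi_1}}(x_m)}\sum_{x_r\mid x_m}\frac{c_{rm}\,[x_l,x_r]^{c}}{x_r},
\end{equation*}
an eight-term sum over the scaled $B_3$ lattice under $x_m$. The crux is that the eight lcms $[x_l,x_r]$ are \emph{not} determined by the local $B_3$ data at all: they depend on how the external element $x_l$ meets that lattice, and the sum factors only after one establishes collapse identities such as $[x_l,x_m]=[x_l,x_{m_3}]$, $[x_l,x_{m_1}]=[x_l,x_{m_{13}}]$, $[x_l,x_{m_2}]=[x_l,x_{m_{23}}]$, $[x_l,x_{m_{12}}]=[x_l,x_{m_4}]$ (these hold when $(x_l,x_m)\mid x_{m_{12}}$ but $(x_l,x_m)\nmid x_{m_{13}},x_{m_{23}}$), with different identities in the other configurations. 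Proving them requires a case analysis on the position of $(x_l,x_m)$ among the eight nodes (the paper needs four cases, two with sub-cases) and uses condition $\mathcal G$ not only at $x_m$ but on sub-intervals, through the statement that $y\in G_S(x)$, $z\mid x$, $z\neq x$, $z\nmid y$ and $A_S(z,x)$ satisfying $\mathcal G$ force $[y,z]=x$ (the paper's Lemmas 2.10 and 2.16). Your sentence that the factorisations ``should reduce every entry to a product of integers'' is a hope, not an argument: without the collapse identities the eight-term sum does not factor. You also assume silently that $[T]$ is invertible, i.e.\ $\alpha_{\frac{1}{\xi_1}}(x_k)\neq 0$ for all $k$ (the paper invokes Hong's nonsingularity theorem, valid for $\max_x|G_S(x)|\le 3$), and you never address why each $h(l,m)$ absorbs the extra factor $x_s$ coming from your left-hand $D_1^{-1}$ — in the paper this is the factor $(x_m/x_s)^a$ tracked through Lemma 2.14(ii) and every case of Lemma 2.18. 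In short, the plan coincides with the paper's, but the mathematical core of the theorem — the content of the paper's Lemmas 2.14(ii), 2.16 and 2.18 — is missing.
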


We define $S_\sigma:=\{x_{\sigma (1)},\cdots, x_{\sigma (n)}\}$
for any permutation $\sigma$ on the set $\{1, \cdots, n \}$.
It is easy to check that for any permutation $\sigma$
on $\{1, \cdots, n \}$, a necessary and sufficient condition
for $(S^a)|(S^b)$ to hold in the ring $M_n({\mathbb Z})$
is $(S_{\sigma}^a)|(S_{\sigma}^b)$ being true in the
ring $M_n({\mathbb Z})$, and a necessary and sufficient
condition for $(S^a)|[S^b]$ to hold in the ring
$M_n({\mathbb Z})$ is $(S_{\sigma}^a)|[S_{\sigma}^b]$
being true in the ring $M_n({\mathbb Z})$. So, without
loss of any generality, we always assume that the set
$S=\{x_1,\cdots, x_n \}$ satisfies that $x_1<\cdots <x_n$.

This paper is organized as follows. We present in Section 2
several preliminary lemmas that are needed in the proofs of
our main results. Subsequently, we give the proof of Theorem
\ref{Theorem 1.2} in Section 3. Finally, Section 4 is devoted to
the proof of Theorem \ref{Theorem 1.3}.

\section{Preliminary lemmas}
In this section, we present some lemmas that are needed in
the proofs of Theorems \ref{Theorem 1.2} and \ref{Theorem 1.3}.
We begin with a known result due to Bourque and Ligh
\cite{[BL-LMA1993]}.

\begin{lemma} {\rm \cite[Theorem 3]{[BL-LMA1993]}} \label{Lemma 2.1}
If $S$ is gcd closed and $(f(S))$ is
nonsingular, then for any integers $i$ and
$j$ with $1\le i, j\le n$, we have
$$((f(S))^{-1})_{ij}:={\underset{x_i|x_k\atop x_j|x_k}
{\sum}}\frac{c_{ik}c_{jk}}{\alpha_f(x_k)}$$
with
\begin{align*}
\alpha_f(x_k):={\underset{d|x_k\atop d\nmid x_t, x_t<x_k}{\sum}}(f*\mu)(d)
\end{align*}
and
\begin{align}\label{eq2.1}
c_{ij}:=\sum _{dx_i|x_j\atop dx_i\nmid x_t, x_t<x_j}\mu(d).
\end{align}
\end{lemma}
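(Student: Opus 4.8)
The plan is to establish an explicit $LDL^{T}$-type factorization of $(f(S))$ and then invert it entrywise. I would start from the Möbius-inversion identity $f((x_i,x_j))=\sum_{d\mid(x_i,x_j)}(f*\mu)(d)$ and note that $d\mid(x_i,x_j)$ is equivalent to $d\mid x_i$ and $d\mid x_j$. The essential use of the gcd-closed hypothesis enters here: for any $d$ dividing some element of $S$, the set $T_d:=\{x\in S:d\mid x\}$ is nonempty and closed under taking gcd's, since $d\mid x$ and $d\mid y$ force $d\mid(x,y)\in S$. Hence $T_d$ has a least element $m(d):=\gcd(T_d)\in S$, and every member of $T_d$ is a multiple of $m(d)$; consequently $d\mid x_i$ holds if and only if $m(d)\mid x_i$. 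Observe also that writing $x_k=m(d)$ amounts exactly to requiring $d\mid x_k$ together with $d\nmid x_t$ for all $x_t<x_k$.

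Regrouping the double sum according to the value of $m(d)$ then gives
$$f((x_i,x_j))=\sum_{\substack{x_k\mid x_i\\ x_k\mid x_j}}\ \sum_{\substack{d\mid x_k\\ d\nmid x_t\,(x_t<x_k)}}(f*\mu)(d)=\sum_{\substack{x_k\mid x_i\\ x_k\mid x_j}}\alpha_f(x_k).$$
This is precisely the matrix identity $(f(S))=EDE^{T}$, where $E=(\epsilon_{ik})$ is given by $\epsilon_{ik}=1$ if $x_k\mid x_i$ and $\epsilon_{ik}=0$ otherwise, and $D=\mathrm{diag}(\alpha_f(x_1),\dots,\alpha_f(x_n))$. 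Because we order $x_1<\cdots<x_n$, the matrix $E$ is lower triangular with $1$'s on the diagonal, hence invertible over $\mathbb Z$; and nonsingularity of $(f(S))$ forces every $\alpha_f(x_k)\neq0$, so $D^{-1}$ exists. Inverting yields $(f(S))^{-1}=(E^{-1})^{T}D^{-1}E^{-1}$, whence $((f(S))^{-1})_{ij}=\sum_k (E^{-1})_{ki}(E^{-1})_{kj}\big/\alpha_f(x_k)$.

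It then remains to identify $(E^{-1})_{ki}$ with the coefficient $c_{ik}$, and this is the step that carries the real content. I would verify that the matrix $C$ with $C_{ki}=c_{ik}$ satisfies $EC=I$ by a direct computation. Using $C_{kj}=c_{jk}=\sum_{d:\,dx_j\mid x_k,\ dx_j\nmid x_t(x_t<x_k)}\mu(d)$, the inner condition again says exactly $x_k=m(dx_j)$, so for each admissible $d$ there is a unique $k$, and it satisfies $x_k\mid x_m$ precisely when $dx_j\mid x_m$. Therefore $(EC)_{mj}=\sum_{x_k\mid x_m}c_{jk}=\sum_{d:\,dx_j\mid x_m}\mu(d)$, which (writing $x_m=x_jq$ when $x_j\mid x_m$) collapses via $\sum_{d\mid q}\mu(d)=\delta_{q,1}$ to $\delta_{mj}$, and vanishes when $x_j\nmid x_m$. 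This confirms $C=E^{-1}$, so $(E^{-1})_{ki}=c_{ik}$; substituting gives the stated formula, the summation range $x_i\mid x_k,\ x_j\mid x_k$ arising from the support of $c_{ik}$ and $c_{jk}$. The main obstacle is organizing this final Möbius inversion so that the set-dependent condition ``$d\nmid x_t$ for $x_t<x_k$'' is faithfully translated into the clean minimality statement $x_k=m(dx_j)$; once that dictionary is in place, the collapse to $\delta_{mj}$ is routine.
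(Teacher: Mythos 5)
Your proposal is correct. Note that the paper offers no proof of Lemma 2.1 at all: it is quoted verbatim as a known result, namely Theorem 3 of Bourque and Ligh \cite{[BL-LMA1993]}. Your argument is essentially a faithful reconstruction of that original proof: the gcd-closed hypothesis yields the incidence-matrix factorization $(f(S))=EDE^{T}$ with $D=\mathrm{diag}(\alpha_f(x_1),\dots,\alpha_f(x_n))$, nonsingularity forces each $\alpha_f(x_k)\neq 0$, and the entries of $E^{-1}$ are identified with the coefficients $c_{ik}$ via M\"obius inversion, using the dictionary between the minimality condition ``$d\mid x_k$, $d\nmid x_t$ for $x_t<x_k$'' and the least element of $\{x\in S: d\mid x\}$. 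All steps check out, including the delicate verification $EC=I$ and the observation that $c_{ik}=0$ unless $x_i\mid x_k$, which produces the stated summation range; so your write-up would serve as a self-contained proof of the cited result.
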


\begin{lemma}\label{Lemma 2.2}
If $S$ is gcd closed, then the power GCD matrix $(S^a)$
is nonsingular and for arbitrary integers $i$ and
$j$ with $1\le i, j\le n$, one has
\begin{align*}
((S^a)^{-1})_{ij}:=\sum_{x_i|x_k\atop x_j|x_k}
\frac{c_{i k}c_{j k}}{\alpha _{\xi_a}(x_k)}
\end{align*}
with $c_{ij}$ being defined as in {\rm(\ref{eq2.1})}
and $\alpha_{\xi_a}(x_k)$ being defined as in {\rm(\ref{eq1.1})}.
\end{lemma}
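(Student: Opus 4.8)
The plan is to obtain Lemma~\ref{Lemma 2.2} as the specialization $f=\xi_a$ of the Bourque--Ligh inversion formula recorded in Lemma~\ref{Lemma 2.1}. Since Lemma~\ref{Lemma 2.1} already supplies the entries of $(f(S))^{-1}$ once we know that $(f(S))$ is invertible, the only genuine task is to verify that $(\xi_a(S))=(S^a)$ is nonsingular whenever $S$ is gcd closed; after that the inverse formula follows by mere substitution.

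To establish nonsingularity I would invoke the Bourque--Ligh determinant evaluation quoted in the introduction, namely $\det(S^a)=\prod_{k=1}^n\alpha_{\xi_a}(x_k)$ with $\alpha_{\xi_a}(x_k)$ as in~(\ref{eq1.1}), so that invertibility is equivalent to $\alpha_{\xi_a}(x_k)\neq 0$ for every $k$. To control these quantities I would analyze the Dirichlet convolution $\xi_a*\mu$. As $\xi_a$ and $\mu$ are both multiplicative, so is $\xi_a*\mu$, and on a prime power $p^e$ one computes $(\xi_a*\mu)(p^e)=p^{ae}-p^{a(e-1)}=p^{a(e-1)}(p^a-1)>0$, while $(\xi_a*\mu)(1)=1$; hence $(\xi_a*\mu)(d)>0$ for every positive integer $d$ (this convolution is just the Jordan totient $J_a$). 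Returning to~(\ref{eq1.1}), the defining sum for $\alpha_{\xi_a}(x_k)$ ranges over those divisors $d$ of $x_k$ dividing no smaller element of $S$; every summand $(\xi_a*\mu)(d)$ is positive and the divisor $d=x_k$ always occurs, since $x_k\nmid x_t$ for $x_t<x_k$. Therefore $\alpha_{\xi_a}(x_k)\ge(\xi_a*\mu)(x_k)>0$, giving $\det(S^a)>0$ and in particular the nonsingularity of $(S^a)$.

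With nonsingularity in hand I would simply apply Lemma~\ref{Lemma 2.1} with $f=\xi_a$: the general inverse entry $((f(S))^{-1})_{ij}=\sum_{x_i|x_k,\,x_j|x_k}c_{ik}c_{jk}/\alpha_f(x_k)$ becomes precisely the asserted formula, because $\alpha_f$ specializes to $\alpha_{\xi_a}$ while the coefficients $c_{ij}$ of~(\ref{eq2.1}) depend only on $S$ and are unchanged. The only step demanding any thought is the strict positivity of each $\alpha_{\xi_a}(x_k)$, and even this is elementary once $\xi_a*\mu$ is recognized as a positive arithmetic function; all remaining assertions are direct substitutions into the already-established Lemma~\ref{Lemma 2.1}, so I anticipate no substantive obstacle.
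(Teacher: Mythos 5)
Your proposal is correct, and its overall skeleton matches the paper's proof: establish nonsingularity of $(S^a)$, then apply Lemma~\ref{Lemma 2.1} with $f=\xi_a$ and read off the formula. The difference lies entirely in how nonsingularity is obtained. The paper disposes of it in one line by citing Example 1(ii) of \cite{[BL-LMA1993]}, which asserts that $(S^a)$ is positive definite, hence nonsingular. You instead combine the Bourque--Ligh determinant evaluation $\det(S^a)=\prod_{k=1}^n\alpha_{\xi_a}(x_k)$ (already quoted in the introduction alongside (\ref{eq1.1})) with the observation that $\xi_a*\mu$ is the Jordan totient $J_a$, which is strictly positive, and that the divisor $d=x_k$ always survives the condition $d\nmid x_t$ for $x_t<x_k$, so that $\alpha_{\xi_a}(x_k)\ge (\xi_a*\mu)(x_k)>0$ for every $k$. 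Your route is marginally longer but more self-contained, and it has a small side benefit the paper's citation leaves implicit: it shows directly that every denominator $\alpha_{\xi_a}(x_k)$ appearing in the inverse formula is nonzero (indeed positive), so the right-hand side of the asserted identity is manifestly well defined. Conversely, the paper's appeal to positive definiteness is shorter and yields a stronger spectral fact, but it outsources all of the work to the cited example. Both arguments lean on results from the same Bourque--Ligh paper, so neither is more elementary in an essential sense; the choice is one of economy versus transparency.
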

\begin{proof} By \cite[Example 1 (ii)]{[BL-LMA1993]},
one can deduce that the power GCD matrix $(S^a)$
is positive definite, and so is nonsingular. Then
Lemma \ref{Lemma 2.1} applied to $f=\xi_a$ gives us
the expected result. Lemma \ref{Lemma 2.2} is proved.
\end{proof}

For any positive integer $x$, $\frac{1}{\xi_a}$ is
the arithmetic function defined by $\frac{1}{\xi_a}(x)=\frac{1}{x^a}$.

\begin{lemma}{\rm\cite[Lemma 2.1]{[H-AA2004]}}\label{Lemma 2.4}
If $S$ is gcd closed, then
\begin{align}\label{eq2.2}
\det[S^a]=\prod\limits_{k=1}^n x_k^{2a}\alpha_{\frac{1}{\xi_a}}(x_k),
\end{align}
where
\begin{align}\label{eq2.3}
\alpha_{\frac{1}{\xi_a}}(x_k)={\underset{{d|x_k\atop d\nmid x_t,x_t<x_k}}{\sum}}\Big(\frac{1}{\xi_a}*\mu\Big)(d).
\end{align}
\end{lemma}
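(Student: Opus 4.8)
The plan is to prove the determinant formula for the power LCM matrix by relating it to a power GCD matrix via a diagonal scaling, then invoking the determinant formula for GCD matrices that is implicit in Lemma~\ref{Lemma 2.1} (applied to the reciprocal function $\frac{1}{\xi_a}$). The key observation is the standard factorization identity $[x_i,x_j]=\frac{x_ix_j}{(x_i,x_j)}$, which lets me write the $(i,j)$-entry of $[S^a]$ as $x_i^a x_j^a \cdot \frac{1}{(x_i,x_j)^a}$. Since $\frac{1}{\xi_a}(x)=\frac{1}{x^a}$, this means $[x_i,x_j]^a = x_i^a\,\frac{1}{\xi_a}(x_i,x_j)\,x_j^a$ in the sense of entrywise products.

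First I would set $D:=\mathrm{diag}(x_1^a,\dots,x_n^a)$ and observe that the entrywise identity above is exactly the matrix equation $[S^a]=D\,\big(\tfrac{1}{\xi_a}(S)\big)\,D$, where $\big(\tfrac{1}{\xi_a}(S)\big)$ denotes the GCD-type matrix with $(i,j)$-entry $\frac{1}{\xi_a}(x_i,x_j)=\frac{1}{(x_i,x_j)^a}$. Taking determinants and using multiplicativity gives
\begin{align*}
\det[S^a]=(\det D)^2\,\det\Big(\tfrac{1}{\xi_a}(S)\Big)=\Big(\prod_{k=1}^n x_k^a\Big)^2 \det\Big(\tfrac{1}{\xi_a}(S)\Big)=\prod_{k=1}^n x_k^{2a}\cdot\det\Big(\tfrac{1}{\xi_a}(S)\Big).
\end{align*}
The remaining task is to show $\det\big(\tfrac{1}{\xi_a}(S)\big)=\prod_{k=1}^n \alpha_{\frac{1}{\xi_a}}(x_k)$, with $\alpha_{\frac{1}{\xi_a}}$ as defined in~(\ref{eq2.3}).

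Next I would obtain this determinant formula for the GCD matrix of the reciprocal function. Since $S$ is gcd closed and the function is $f=\frac{1}{\xi_a}$, the Bourque--Ligh determinant formula (the same formula underlying Lemma~\ref{Lemma 2.1}, namely $\det(f(S))=\prod_{k=1}^n\alpha_f(x_k)$ for gcd-closed $S$) applies directly with $f=\frac{1}{\xi_a}$, yielding $\det\big(\tfrac{1}{\xi_a}(S)\big)=\prod_{k=1}^n\alpha_{\frac{1}{\xi_a}}(x_k)$. Combining this with the displayed determinant computation produces exactly $\det[S^a]=\prod_{k=1}^n x_k^{2a}\,\alpha_{\frac{1}{\xi_a}}(x_k)$, which is~(\ref{eq2.2}).

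The main obstacle I anticipate is justifying cleanly that the Bourque--Ligh gcd-closed determinant formula is valid for the function $\frac{1}{\xi_a}$, which takes rational rather than integer values; one must check that the derivation of the formula only uses gcd-closedness of $S$ and the structure of Dirichlet convolution, not integrality or positivity of $f$. This is the case, since the formula is a purely algebraic identity over $\mathbb{Q}$ for any arithmetic function on a gcd-closed set, so the reciprocal-valued function is admissible. The only other point to handle carefully is the entrywise-versus-matrix translation of $[x_i,x_j]^a = x_i^a x_j^a/(x_i,x_j)^a$; this is routine once one recognizes that pre- and post-multiplication by the diagonal matrix $D$ scales the $(i,j)$-entry by $x_i^a x_j^a$. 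Since this result is cited from Hong's paper~\cite{[H-AA2004]}, I expect the actual proof in the text to simply reference that source rather than reprove the Bourque--Ligh formula from scratch.
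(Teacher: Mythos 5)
Your proposal is correct, and it is the standard proof of this determinant formula. Note, however, that the paper itself gives no proof of Lemma \ref{Lemma 2.4}: it is quoted verbatim from Hong's 2004 Acta Arithmetica paper (cited as \cite[Lemma 2.1]{[H-AA2004]}), exactly as you anticipated in your closing remark. Your argument --- the diagonal factorization $[S^a]=D\,\big(\tfrac{1}{\xi_a}(x_i,x_j)\big)\,D$ with $D=\mathrm{diag}(x_1^a,\dots,x_n^a)$, followed by the Bourque--Ligh gcd-closed determinant formula $\det(f(S))=\prod_{k=1}^n\alpha_f(x_k)$ applied to the rational-valued function $f=\tfrac{1}{\xi_a}$ --- is precisely the route Hong takes in the cited source, and your point that the Bourque--Ligh identity is purely algebraic (so integrality and positivity of $f$ are irrelevant) is the right justification for admitting the reciprocal function. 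It is also worth observing that this paper runs the same computation in the opposite direction: in the proof of Lemma \ref{Lemma 2.9}, the identities \eqref{eq2.5} and \eqref{eq2.6} are combined with the cited formula \eqref{eq2.2} to \emph{deduce} $\det\big(\tfrac{1}{\xi_a}(x_i,x_j)\big)=\prod_{k=1}^n\alpha_{\frac{1}{\xi_a}}(x_k)$, whereas you establish that product formula independently from Bourque--Ligh and then deduce \eqref{eq2.2}; your direction is self-contained and introduces no circularity, so it would serve as a valid replacement for the external citation.
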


\begin{lemma}{\rm{\cite{[H-AA2004]}}} \label{Lemma 2.5}
Let $S$ be a gcd-closed set and let $G_S(x_k)=\{y_{k1},\cdots,y_{k,l_k}\}$
be the set of the greatest-type divisors of $x_k(1\le k\le n)$ in $S$.
Then
$$
\alpha_{\xi_a}(x_k)=x_k^a+\sum_{t=1}^{l_k}(-1)^t\sum\limits_{1\le i_1<\cdots<i_t\le l_k}(x_k,y_{k,i_1},\cdots,y_{k,i_t})^a
$$
and
$$
\alpha_{\frac{1}{\xi_a}}(x_k)=x_k^{-a}+\sum_{t=1}^{l_k}(-1)^t\sum
\limits_{1\le i_1<\cdots<i_t\le l_k}(x_k,y_{k,i_1},\cdots,y_{k,i_t})^{-a}
$$
with $\alpha_{\xi_a}(x_k)$ being determined as in \rm(\ref{eq1.1}) and
$\alpha_{\frac{1}{\xi_a}}(x_k)$ being determined as in \rm(\ref{eq2.3}).
\end{lemma}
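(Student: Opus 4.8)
The plan is to prove both identities at once, since they are the specializations $f=\xi_a$ and $f=\frac{1}{\xi_a}$ of a single inclusion-exclusion formula valid for an arbitrary arithmetic function $f$. Write $\alpha_f(x_k):=\sum_{d\mid x_k,\, d\nmid x_t,\, x_t<x_k}(f*\mu)(d)$ as in Lemma~\ref{Lemma 2.1}, so that $\alpha_{\xi_a}$ and $\alpha_{\frac{1}{\xi_a}}$ from {\rm(\ref{eq1.1})} and {\rm(\ref{eq2.3})} are the cases $f=\xi_a$ and $f=\frac{1}{\xi_a}$. The crux is to replace the indexing condition ``$d\nmid x_t$ for every $x_t<x_k$ in $S$'' by the more tractable ``$d\nmid y_{k,i}$ for every $i\in\{1,\dots,l_k\}$''.

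First I would establish the set-theoretic equivalence: for a divisor $d$ of $x_k$, one has $d\mid x_t$ for some $x_t\in S$ with $x_t<x_k$ \emph{if and only if} $d\mid y_{k,i}$ for some $i$. The backward direction is immediate, since each greatest-type divisor $y_{k,i}$ is itself such an $x_t$. For the forward direction, suppose $d\mid x_t$ with $x_t<x_k$; then $d\mid(x_k,x_t)$, and because $S$ is gcd closed the integer $w:=(x_k,x_t)$ lies in $S$, satisfies $w\mid x_k$, and obeys $w<x_k$. Choosing a divisibility-maximal element $y$ of the nonempty set $\{z\in S:\ w\mid z\mid x_k,\ z<x_k\}$ (which contains $w$) produces a greatest-type divisor $y=y_{k,i}$ of $x_k$ with $w\mid y_{k,i}$, whence $d\mid y_{k,i}$. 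This step is where gcd-closedness is essential, and it is the main obstacle.

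With this equivalence in hand, set $A_i:=\{d:\ d\mid y_{k,i}\}$; since $y_{k,i}\mid x_k$, $A_i$ is exactly the set of divisors of $x_k$ dividing $y_{k,i}$. Then $\alpha_f(x_k)=\sum_{d\mid x_k}(f*\mu)(d)-\sum_{d\in\bigcup_{i}A_i}(f*\mu)(d)$. The first sum collapses to $f(x_k)$ by the convolution identity $\sum_{d\mid m}(f*\mu)(d)=\big((f*\mu)*\mathbf{1}\big)(m)=\big(f*(\mu*\mathbf{1})\big)(m)=f(m)$, where $\mathbf{1}$ is the constant function $1$. For the second sum, inclusion-exclusion gives $\sum_{d\in\bigcup_{i}A_i}(f*\mu)(d)=\sum_{\emptyset\ne I\subseteq\{1,\dots,l_k\}}(-1)^{|I|+1}\sum_{d\in\bigcap_{i\in I}A_i}(f*\mu)(d)$, and for $I=\{i_1<\cdots<i_t\}$ the set $\bigcap_{i\in I}A_i$ is precisely the set of divisors of $(y_{k,i_1},\dots,y_{k,i_t})=(x_k,y_{k,i_1},\dots,y_{k,i_t})$; applying the same convolution identity evaluates each inner sum as $f\big((x_k,y_{k,i_1},\dots,y_{k,i_t})\big)$.

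Finally I would assemble the pieces. The sign $(-1)^{|I|+1}$ carried by each $t$-element subset becomes $-(-1)^{t}$ after accounting for the leading minus in front of the union-sum, giving $\alpha_f(x_k)=f(x_k)+\sum_{t=1}^{l_k}(-1)^{t}\sum_{1\le i_1<\cdots<i_t\le l_k}f\big((x_k,y_{k,i_1},\dots,y_{k,i_t})\big)$. Specializing $f=\xi_a$, so that $f(m)=m^a$, yields the first claimed formula, and $f=\frac{1}{\xi_a}$, so that $f(m)=m^{-a}$, yields the second. Since every step after the equivalence is independent of the particular $f$, the two formulas follow together.
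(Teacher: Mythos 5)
Your proof is correct. The paper itself gives no proof of Lemma \ref{Lemma 2.5} (it is quoted from \cite{[H-AA2004]}), and your argument --- using gcd-closedness and a divisibility-maximal element to replace the condition ``$d\nmid x_t$ for all $x_t\in S$ with $x_t<x_k$'' by ``$d\nmid y_{k,i}$ for all $i$'', then applying M\"obius inversion ($\sum_{d\mid m}(f*\mu)(d)=f(m)$) together with inclusion-exclusion over the sets of divisors of the $y_{k,i}$ --- is exactly the standard proof appearing in that reference, carried out uniformly in $f$ so that both specializations $f=\xi_a$ and $f=\frac{1}{\xi_a}$ follow at once.
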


\begin{lemma}{\rm\cite[Lemma 2.3]{[FHZ-DM2009]}}
\label{Lemma 2.6}
Let S be a gcd-closed set of $n\ge 2$ distinct
positive integers and let $c_{ij}$ be defined
as in {\rm(\ref{eq2.1})}. Then
$$
c_{r1}=\left\{\begin{aligned}
{1}& \quad if\ r=1,\\
{0}&\quad otherwise.
\end{aligned}
\right.
$$
If $2\le m\le n$ and $G_S(x_m)=\{x_{m_0}\}$, then
$$
c_{rm}=\left\{
\begin{aligned}
{-1}&\quad if\ r=m_0,\\
{1}&\quad if\ r=m,\\
{0}&\quad otherwise.
\end{aligned}
\right.
$$
If $3\le m\le n$ and $G_S(x_m)=\{x_{m_{01}},\ x_{m_{02}}\}$
and $x_{m_{03}}=(x_{m_{01}},\ x_{m_{02}})$, then
$$
c_{rm}=\left\{
\begin{aligned}
{-1}&\quad if \ r=m_{01}\ or\ r=m_{02},\\
{1}&\quad if\ r=m\ or\ m_{03},\\
{0}&\quad otherwise.
\end{aligned}
\right.
$$
\end{lemma}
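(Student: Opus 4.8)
The plan is to evaluate the integer $c_{rm}$ directly from its definition in {\rm(\ref{eq2.1})}, by regrouping the summation and combining the gcd-closedness of $S$ with the elementary M\"{o}bius identity $\sum_{d\mid k}\mu(d)=[k=1]$ (where $[\,\cdot\,]$ denotes the Iverson bracket). First I would put $e:=dx_r$, so that $c_{rm}=\sum \mu(e/x_r)$, the sum being over all $e$ with $x_r\mid e\mid x_m$ and $e\nmid x_t$ for every $x_t\in S$ with $x_t<x_m$. In particular this sum is empty unless $x_r\mid x_m$, which already yields $c_{rm}=0$ when $x_r\nmid x_m$; so from now on assume $x_r\mid x_m$ and set $N:=x_m/x_r$.

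The main technical point is to replace the condition ``$e\nmid x_t$ for all $x_t<x_m$'', which ranges over every smaller element of $S$, by the far more manageable condition ``$e\nmid x_{mj}$ for every greatest-type divisor $x_{mj}$ of $x_m$''. This is where gcd-closedness enters: if $e\mid x_m$ and $e\mid x_t$ for some $x_t<x_m$, then $e\mid(x_t,x_m)$ and $(x_t,x_m)\in S$ is a proper divisor of $x_m$; choosing a divisibility-maximal $y\in S$ with $(x_t,x_m)\mid y\mid x_m$ and $y\neq x_m$ produces a greatest-type divisor of $x_m$ divisible by $e$. The reverse implication is trivial, since each greatest-type divisor already lies in $S$ and is smaller than $x_m$.

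With this reduction I would expand the indicator of ``$e\nmid x_{mj}$ for all $j$'' by inclusion--exclusion over the set $G_S(x_m)=\{x_{mj}\}$, using that $e$ divides $x_{mj}$ for all $j$ in a subset $T$ exactly when $e$ divides $\gcd_{j\in T}x_{mj}$, again an element of $S$ by gcd-closedness. Every inner sum then has the form $\sum_{d\mid N}\mu(d)\,[\,x_rd\mid g\,]$ with $g\mid x_m$; since $g\mid x_m$ forces $g/x_r\mid N$ whenever $x_r\mid g$, the M\"{o}bius identity collapses this sum to the single Iverson bracket $[\,x_r=g\,]$. Thus the empty subset contributes $\sum_{d\mid N}\mu(d)=[x_r=x_m]$, and each nonempty $T$ contributes $(-1)^{|T|}[\,x_r=\gcd_{j\in T}x_{mj}\,]$.

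It then remains to read off the three stated cases. When $m=1$ there are no greatest-type divisors, only the empty-subset term survives, and $c_{r1}=[r=1]$. When $G_S(x_m)=\{x_{m_0}\}$, the two surviving terms give $c_{rm}=[r=m]-[r=m_0]$. When $G_S(x_m)=\{x_{m_{01}},x_{m_{02}}\}$, the four subsets give $c_{rm}=[r=m]-[r=m_{01}]-[r=m_{02}]+[r=m_{03}]$ with $x_{m_{03}}=(x_{m_{01}},x_{m_{02}})$; here I would note that two distinct greatest-type divisors of $x_m$ are incomparable, so $x_{m_{03}}$ is a proper common divisor and the four indices $m,m_{01},m_{02},m_{03}$ are pairwise distinct, ruling out any cancellation. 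These values coincide with the three displayed formulas, which completes the proof.
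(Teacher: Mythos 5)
Your proof is correct. For the comparison: the paper does not actually prove this lemma --- it is quoted from \cite[Lemma 2.3]{[FHZ-DM2009]} --- so the relevant benchmark is the paper's own proof of the analogous Lemma \ref{Lemma 2.7} (the case $|G_S(x_m)|=3$), which runs by case analysis on $r$: for $x_r$ equal to $x_m$, to a greatest-type divisor, to a pairwise gcd, to the triple gcd, and finally for the remaining divisors (three further sub-cases), it evaluates each constrained M\"obius sum separately as an ad hoc inclusion-exclusion of complete sums $\sum_{d\mid k}\mu(d)$. You instead prove one uniform identity: after substituting $e=dx_r$, gcd-closedness lets you replace the condition ``$e\nmid x_t$ for every $x_t\in S$ with $x_t<x_m$'' by ``$e$ divides no element of $G_S(x_m)$'' (your maximality argument for this reduction is sound: a divisibility-maximal proper divisor of $x_m$ in $S$ lying above $(x_t,x_m)$ is by definition a greatest-type divisor), and inclusion-exclusion over subsets $T\subseteq G_S(x_m)$ collapses the definition to
$c_{rm}=\sum_{T\subseteq G_S(x_m)}(-1)^{|T|}\,[\,x_r=\gcd_{y\in T}y\,]$,
with the empty gcd read as $x_m$; the three displayed formulas then follow at once, the only residual check being that the relevant gcds are pairwise distinct, which you settle correctly via the incomparability of distinct greatest-type divisors. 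What your route buys is uniformity and generality: the same closed formula would also yield the paper's Lemma \ref{Lemma 2.7} immediately, modulo the distinctness claim $x_{m_4}\neq x_{m_{ij}}$ that the paper establishes under condition $\mathcal G$ inside the proof of Lemma \ref{Lemma 2.17}; what the case-by-case style of the paper buys is that each individual verification stays completely elementary, at the cost of having to be redone for every value of $\max_{x\in S}\{|G_S(x)|\}$.
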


\begin{lemma}\label{Lemma 2.7}
Let $S$ be a gcd-closed set satisfying the condition
$\mathcal G$. Let $x_m \in S$ with $G_s(x_m)=\{ x_{m_1},x_{m_2},x_{m_3}\}$,
$x_{m_4}=(x_{m_1},x_{m_2},x_{m_3})$ and
$x_{m_{ij}}=(x_{m_i},x_{m_j})$ for $1\le i<j\le 3$. Then
\begin{align*}
c_{rm}=\left\{\begin{array}{cl}
1, &r=m\ \hbox{or}\ r=m_{ij}(1\le i<j\le 3),\\
-1, &r=m_i(1\le i\le 4),\\
0, &\hbox{otherwise}.
\end{array}\right.
\end{align*}
\end{lemma}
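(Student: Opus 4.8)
The plan is to compute the sum defining $c_{rm}$ in (\ref{eq2.1}) explicitly, reducing it to a signed count of gcd's by an inclusion--exclusion over the three greatest-type divisors; this is the natural extension of the computation behind Lemma \ref{Lemma 2.6}. First I would set $z=dx_r$, rewriting
\begin{align*}
c_{rm}=\sum_{\substack{x_r\mid z\mid x_m\\ z\nmid x_t\ (x_t\in S,\ x_t<x_m)}}\mu(z/x_r),
\end{align*}
so in particular $c_{rm}=0$ unless $x_r\mid x_m$. The structural point is that, because $S$ is gcd closed, a divisor $z$ of $x_m$ divides some $x_t\in S$ with $x_t<x_m$ if and only if $z$ divides one of $x_{m_1},x_{m_2},x_{m_3}$: if $z\mid x_t$ with $x_t<x_m$, then $z\mid(x_t,x_m)\in S$, and $(x_t,x_m)$ is a proper divisor of $x_m$ in $S$, hence divides some greatest-type divisor of $x_m$. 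Expanding the condition ``$z\nmid x_{m_i}$ for $i=1,2,3$'' by inclusion--exclusion and using $z\mid x_{m_i},\,z\mid x_{m_j}\iff z\mid(x_{m_i},x_{m_j})$, I would obtain
\begin{align*}
c_{rm}=\sum_{I\subseteq\{1,2,3\}}(-1)^{|I|}\sum_{x_r\mid z\mid g_I}\mu(z/x_r),
\end{align*}
where $g_\emptyset:=x_m$ and $g_I:=(x_{m_i}:i\in I)$ for $I\neq\emptyset$. Each inner M\"obius sum collapses to the Kronecker delta $\delta_{x_r,g_I}$, giving
\begin{align*}
c_{rm}=\sum_{I\subseteq\{1,2,3\}}(-1)^{|I|}\delta_{x_r,g_I}.
\end{align*}
Since $g_{\{i\}}=x_{m_i}$, $g_{\{i,j\}}=x_{m_{ij}}$ and $g_{\{1,2,3\}}=x_{m_4}$, this already yields the claimed signs, provided the eight integers $x_m,x_{m_1},x_{m_2},x_{m_3},x_{m_{12}},x_{m_{13}},x_{m_{23}},x_{m_4}$ are pairwise distinct and each lies in $S$.

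These eight integers all lie in $S$ because $S$ is gcd closed. The core of the argument is thus their pairwise distinctness, and this is exactly where the condition $\mathcal G$ is used. The comparisons among $x_m,x_{m_1},x_{m_2},x_{m_3}$ and those pitting a pairwise or total gcd against a single $x_{m_i}$ are immediate from the inequalities $x_{m_{ij}}\le x_{m_i}<x_m$ and $x_{m_4}\le x_{m_i}$ together with the fact that distinct greatest-type divisors of $x_m$ are incomparable under divisibility. The delicate comparisons are those among $x_{m_{12}},x_{m_{13}},x_{m_{23}}$ and against $x_{m_4}$, which I would settle $p$-adically. For a prime $p$ set $a_i=v_p(x_{m_i})$ and $A=v_p(x_m)$; the relations $[x_{m_i},x_{m_j}]=x_m$ supplied by $\mathcal G$ give $\max(a_i,a_j)=A$ for every pair, which forces at least two of $a_1,a_2,a_3$ to equal $A$ at each prime. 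A short case check then shows that $v_p(x_{m_{12}})=v_p(x_{m_{13}})$ at every prime, i.e. $\min(a_1,a_2)=\min(a_1,a_3)$, forces $a_2=a_3$ at every prime, hence $x_{m_2}=x_{m_3}$, contradicting the distinctness of the greatest-type divisors; so the three pairwise gcd's are distinct. Similarly $x_{m_{12}}=x_{m_4}$, i.e. $\min(a_1,a_2)\le a_3$ at every prime, forces $a_3\ge a_1$ and $a_3\ge a_2$ everywhere, that is $x_{m_1}\mid x_{m_3}$ and $x_{m_2}\mid x_{m_3}$, again contradicting incomparability; hence $x_{m_{ij}}\neq x_{m_4}$ for all pairs.

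Once pairwise distinctness is in hand, exactly one subset $I$ makes each delta $\delta_{x_r,g_I}$ nonzero, so $c_{rm}=+1$ for $x_r\in\{x_m,x_{m_{12}},x_{m_{13}},x_{m_{23}}\}$, $c_{rm}=-1$ for $x_r\in\{x_{m_1},x_{m_2},x_{m_3},x_{m_4}\}$, and $c_{rm}=0$ otherwise, which is precisely the asserted formula. I expect the distinctness step, rather than the bookkeeping, to be the real obstacle: the inclusion--exclusion identity for $c_{rm}$ holds on any gcd-closed set, but it produces the clean $\pm1/0$ pattern only because $\mathcal G$ --- through the equalities $[x_{m_i},x_{m_j}]=x_m$ and the incomparability of greatest-type divisors --- prevents two distinct subsets $I$ from sharing a common value $g_I$. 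Were such a coincidence allowed, the corresponding signs would add and the stated values would fail.
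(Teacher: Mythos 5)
Your proof is correct, and it takes a route that differs from the paper's in a worthwhile way. The paper evaluates $c_{rm}$ case by case ($r=m$, $r=m_i$, $r=m_{ij}$, $r=m_4$, then three ``otherwise'' configurations), each time expanding the defining M\"obius sum by inclusion--exclusion over those $x_{m_k}$ that $x_r$ divides and evaluating the resulting sums as $0$ or $1$; you instead prove once the identity $c_{rm}=\sum_{I\subseteq\{1,2,3\}}(-1)^{|I|}\delta_{x_r,g_I}$, valid on \emph{any} gcd-closed set, and then reduce the lemma to the pairwise distinctness of the eight numbers $g_I$, which is exactly where the condition $\mathcal G$ enters. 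The comparison is instructive because the paper's computation uses that distinctness tacitly: in its case $r=m_4$ the evaluations $\sum_{d\mid x_{m_{ij}}/x_{m_4}}\mu(d)=0$ presuppose $x_{m_{ij}}\neq x_{m_4}$, and in its case $r=m_{ij}$ the four-term expansion, which silently drops the third greatest-type divisor $x_{m_k}$, presupposes $x_{m_{ij}}\nmid x_{m_k}$ --- the same statement, since $x_{m_{ij}}\mid x_{m_k}$ would force $x_{m_{ij}}=x_{m_4}$. The paper only establishes $x_{m_{ij}}\neq x_{m_4}$ later, as the claim at the start of the proof of Lemma \ref{Lemma 2.17}, via the identity $[y_1,y_2,y_3]=\frac{y_1y_2y_3\,(y_1,y_2,y_3)}{(y_1,y_2)(y_1,y_3)(y_2,y_3)}$, whereas your $p$-adic argument (from $[x_{m_i},x_{m_j}]=x_m$ for all pairs, at each prime at least two of the valuations of $x_{m_1},x_{m_2},x_{m_3}$ equal $v_p(x_m)$; hence $x_{m_{ij}}=x_{m_{ik}}$ forces $x_{m_j}=x_{m_k}$, and $x_{m_{ij}}=x_{m_4}$ forces $x_{m_i}\mid x_{m_k}$, both contradicting the incomparability of distinct greatest-type divisors) proves it on the spot. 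So your write-up is self-contained precisely where the paper's proof leans on a forward reference; what the paper's organization buys is brevity in the rows where no coincidence can occur, while yours makes the one genuinely delicate point --- that $\mathcal G$ is what rules out coincidences among the $g_I$ and hence among the signs --- explicit rather than absorbed into the evaluation of the M\"obius sums.
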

\begin{proof}
From the definition of $c_{ij}$ as in $(\ref{eq2.1})$,
if $x_r \nmid x_m$, then we have $c_{rm} = 0$.
In what follows we let $x_r \mid x_m$. First, let $r=m$.
Then we can easily deduce that
\begin{align*}
c_{mm}=\sum\limits_{dx_m \mid x_m \atop dx_m
\nmid x_t, x_t< x_m}\mu(d) = \mu(1) = 1.
\end{align*}

Let $x_r=x_{m_i}(i=1,2,3)$. We can compute and get that
\begin{align*}
c_{m_im}=\sum\limits_{dx_{m_i} \mid x_m \atop dx_{m_i}
\nmid x_t, x_t< x_m}\mu(d)=\sum\limits_{d \mid \frac{x_m}{x_{m_i}} \atop d\nmid \frac{x_t}{x_{m_i}}, x_t< x_m}\mu(d)
=\sum\limits_{d \mid \frac{x_m}{x_{m_i}}}\mu(d)-\sum\limits_{d \mid \frac{x_{m_i}}{x_{m_i}}}\mu(d)=0-1=-1.
\end{align*}
			
Let $x_r=x_{m_{ij}}(1\le i<j\le 3)$. We have
\begin{align*}
c_{m_{ij}m}=&\sum\limits_{dx_{m_{ij}} \mid x_m \atop dx_{m_{ij}}
\nmid x_t, x_t< x_m}\mu(d)\\
=&\sum\limits_{d \mid \frac{x_m}{x_{m_{ij}}} \atop d\nmid \frac{x_t}{x_{m_{ij}}}, x_t< x_m}\mu(d)\\
=&\sum\limits_{d \mid \frac{x_m}{x_{m_{ij}}}}\mu(d)-\sum\limits_{d \mid \frac{x_{m_i}}{x_{m_{ij}}}}\mu(d)
-\sum\limits_{d \mid \frac{x_{m_j}}{x_{m_{ij}}}}\mu(d)+\sum\limits_{d \mid \frac{x_{m_{ij}}}{x_{m_{ij}}}}\mu(d)\\
=&0-0-0+1
=1.
\end{align*}

Let $x_r=x_{m_4}$. One has
\begin{align*}
c_{m_4m}=&\sum\limits_{dx_{m_4}\mid x_m \atop dx_{m_4}
\nmid x_t, x_t< x_m}\mu(d)\\
=&\sum\limits_{d \mid \frac{x_m}{x_{m_4}} \atop d\nmid \frac{x_t}{x_{m_4}}, x_t< x_m}\mu(d)\\
=&\sum\limits_{d \mid \frac{x_m}{x_{m_4}}}\mu(d)-\sum\limits_{d \mid \frac{x_{m_1}}{x_{m_4}}}\mu(d)
-\sum\limits_{d \mid \frac{x_{m_2}}{x_{m_4}}}\mu(d)-\sum\limits_{d \mid \frac{x_{m_3}}{x_{m_4}}}\mu(d)\\
&+\sum\limits_{d \mid \frac{x_{m_{12}}}{x_{m_4}}}\mu(d)+\sum\limits_{d \mid \frac{x_{m_{13}}}{x_{m_4}}}\mu(d)
+\sum\limits_{d \mid \frac{x_{m_{23}}}{x_{m_4}}}\mu(d)-\sum\limits_{d \mid \frac{x_{m_4}}{x_{m_4}}}\mu(d)\\
=&0-0-0-0+0+0+0-1
=-1.
\end{align*}

Now we treat with the case that $x_r|x_m$ with $r \neq m, m_i(i=1,2,3),m_{ij}(1\le i<j\le 3),m_4$.
Since $S$ is a gcd-closed set satisfying the condition $\mathcal G$, $x_m \in S$ and $|G_s(x_m)|=3$,
At this moment, without loss of generality,  we only need to consider the following three cases:

{\sc Case 1}. $x_r|x_{m_1}$, $x_r|x_{m_2}$ and $x_r|x_{m_3}$. This means $x_r|x_{m_4}$.
Note that $x_r\ne x_{m_4}$. So $\frac{x_{m_4}}{x_r}\ge2$. Thus
\begin{align*}
c_{rm}=&\sum\limits_{dx_r|x_m \atop dx_r
\nmid x_t, x_t< x_m}\mu(d)\\
=&\sum\limits_{d \mid \frac{x_m}{x_r} \atop d\nmid \frac{x_t}{x_r}, x_t< x_m}\mu(d)\\
=&\sum\limits_{d \mid \frac{x_m}{x_r}}\mu(d)-\sum\limits_{d \mid \frac{x_{m_1}}{x_r}}\mu(d)
-\sum\limits_{d \mid \frac{x_{m_2}}{x_r}}\mu(d)-\sum\limits_{d \mid \frac{x_{m_3}}{x_r}}\mu(d)\\
&+\sum\limits_{d \mid \frac{x_{m_{12}}}{x_r}}\mu(d)+\sum\limits_{d \mid \frac{x_{m_{13}}}{x_r}}\mu(d)
+\sum\limits_{d \mid \frac{x_{m_{23}}}{x_r}}\mu(d)-\sum\limits_{d \mid \frac{x_{m_4}}{x_r}}\mu(d)\\
=&0-0-0-0+0+0+0-0
=0.
\end{align*}

{\sc Case 2}. $x_r|x_{m_1}$, $x_r|x_{m_2}$ and $x_r\nmid x_{m_3}$. We can deduce that $x_r|x_{m_{12}}$.
Notice that $\frac{x_{m_{12}}}{x_r}\ge 2$. Therefore
\begin{align*}
c_{rm}=&\sum\limits_{dx_r|x_m \atop dx_r
\nmid x_t, x_t< x_m}\mu(d)\\
=&\sum\limits_{d \mid \frac{x_m}{x_r} \atop d\nmid \frac{x_t}{x_r}, x_t< x_m}\mu(d)\\
=&\sum\limits_{d \mid \frac{x_m}{x_r}}\mu(d)-\sum\limits_{d \mid \frac{x_{m_1}}{x_r}}\mu(d)
-\sum\limits_{d \mid \frac{x_{m_2}}{x_r}}\mu(d)+\sum\limits_{d \mid \frac{x_{m_{12}}}{x_r}}\mu(d)\\
=&0-0-0+0
=0.
\end{align*}

{\sc Case 3}. $x_r|x_{m_1}$, $x_r\nmid x_{m_2}$ and $x_r\nmid x_{m_3}$. Notice that $\frac{x_{m_1}}{x_r}\ge 2$.
Hence
\begin{align*}
c_{rm}=&\sum\limits_{dx_r|x_m \atop dx_r
\nmid x_t, x_t< x_m}\mu(d)=\sum\limits_{d \mid \frac{x_m}{x_r} \atop d\nmid \frac{x_t}{x_r}, x_t< x_m}\mu(d)
=\sum\limits_{d \mid \frac{x_m}{x_r}}\mu(d)-\sum\limits_{d \mid \frac{x_{m_1}}{x_r}}\mu(d)
=0-0=0.
\end{align*}

This finished the proof of Lemma \ref{Lemma 2.7}.
\end{proof}

\begin{lemma}{\rm\cite{[H-AC06]}}\label{Lemma 2.8}
Let $S$ be gcd closed such that $\max_{x\in S}\{|G_S(x)|\}\le 3$
and $|S|=n$. Let $\alpha_{\frac{1}{\xi_a}}(x_k)$ be defined as in {\rm (\ref{eq2.3})}.
Then $\alpha_{\frac{1}{\xi_a}}(x_k)\neq0$ for any integer $k$ with $1\le k\le n$.
\end{lemma}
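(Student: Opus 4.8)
The plan is to feed the explicit alternating sum of Lemma~\ref{Lemma 2.5} into the expression for $\alpha_{\frac{1}{\xi_a}}(x_k)$ and then to rule out vanishing by a case analysis on $l_k:=|G_S(x_k)|$, which by hypothesis is at most $3$. Write $G_S(x_k)=\{y_1,\dots,y_{l_k}\}$. When $l_k=0$, Lemma~\ref{Lemma 2.5} gives $\alpha_{\frac{1}{\xi_a}}(x_k)=x_k^{-a}>0$; when $l_k=1$, since $(x_k,y_1)=y_1<x_k$ we get $\alpha_{\frac{1}{\xi_a}}(x_k)=x_k^{-a}-y_1^{-a}<0$. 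Both are immediate.

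For $l_k=2$ I would divide out the common factor. Setting $g=(y_1,y_2)$ and $y_i=ga_i$, one has $(a_1,a_2)=1$ and $a_1,a_2\ge2$ (neither $y_i$ divides the other, as both are greatest-type divisors of $x_k$), so writing $m=x_k/g$, Lemma~\ref{Lemma 2.5} yields
\[
\alpha_{\frac{1}{\xi_a}}(x_k)=g^{-a}\bigl(1-a_1^{-a}-a_2^{-a}+m^{-a}\bigr).
\]
Assuming $a_1\ge2$, $a_2\ge3$, we get $a_1^{-a}+a_2^{-a}\le\tfrac12+\tfrac13<1$, so the bracket exceeds $\tfrac16>0$ and $\alpha_{\frac{1}{\xi_a}}(x_k)\neq0$.

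The case $l_k=3$ is the heart of the proof. With $g=(y_1,y_2,y_3)$, $Y_i=y_i/g$, $G_{ij}=(y_i,y_j)/g$ and $M=x_k/g$, Lemma~\ref{Lemma 2.5} gives $\alpha_{\frac{1}{\xi_a}}(x_k)=g^{-a}E$, where
\[
E=M^{-a}-\sum_{i}Y_i^{-a}+\sum_{i<j}G_{ij}^{-a}-1.
\]
The structural fact I would exploit is that $(Y_1,Y_2,Y_3)=1$ forces $G_{12},G_{13},G_{23}$ to be pairwise coprime, whence $G_{ij}G_{ik}\mid Y_i$ and $Y_i\ge\max\{2,G_{ij}G_{ik}\}$, while $M$ is a common multiple of the $Y_i$. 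Splitting on how many $G_{ij}$ exceed $1$: if at most two do, the terms $G_{ij}^{-a}$ equal to $1$ dominate and the lower bounds on the $Y_i$ (with their distinctness keeping inequalities strict) yield $E>0$ by elementary estimates.

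The delicate situation is when all of $G_{12},G_{13},G_{23}$ exceed $1$; being pairwise coprime and $\ge2$ they form, up to order, a coprime triple such as $\{2,3,5\}$ or $\{2,3,7\}$. Here $E$ is \emph{not} sign-definite across configurations, so the crux is to exclude an exact cancellation $E=0$. For $a\ge2$ a crude estimate settles it: discarding the negative terms, $E\le M^{-a}+\sum_{i<j}G_{ij}^{-a}-1\le 2\cdot2^{-a}+3^{-a}+5^{-a}-1<0$, since $\{2,3,5\}$ maximizes $\sum G_{ij}^{-a}$ over coprime triples. For $a=1$ the same bound disposes of every coprime triple except $\{2,3,5\}$, because only there does $\sum_{i<j}G_{ij}^{-1}$ exceed $1$ (already $\{2,3,7\}$ gives $\tfrac{41}{42}<1$), the strictness coming from $-\sum_iY_i^{-1}<0$. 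Thus the whole difficulty concentrates in the single configuration $a=1$ with $\{G_{12},G_{13},G_{23}\}=\{2,3,5\}$, where $E=M^{-1}-\tfrac1{15p_1}-\tfrac1{10p_2}-\tfrac1{6p_3}+\tfrac1{30}$ for positive integers $p_1,p_2,p_3$ and a common multiple $M$ of the $Y_i$; I would finish by clearing denominators and checking that the exact-gcd and common-multiple constraints admit no solution with $E=0$. I expect this Diophantine step to be the main obstacle, and it is precisely here that the hypothesis $\max_{x\in S}\{|G_S(x)|\}\le3$ is indispensable: with more greatest-type divisors, $\alpha_{\frac{1}{\xi_a}}(x_k)$ can genuinely vanish, i.e.\ the power LCM matrix can be singular.
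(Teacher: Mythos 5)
Your reduction via Lemma~\ref{Lemma 2.5}, the disposal of $l_k\le 2$, and the isolation of the single dangerous configuration ($a=1$ with all three of $G_{12},G_{13},G_{23}$ exceeding $1$, necessarily the coprime triple $\{2,3,5\}$) are all correct. But the ``Diophantine step'' you defer is not just the main obstacle --- it is impossible, because that equation \emph{has} a solution meeting every one of your constraints. Take $Y_1=36$, $Y_2=230$, $Y_3=825$, so that $(Y_1,Y_2)=2$, $(Y_1,Y_3)=3$, $(Y_2,Y_3)=5$, $(Y_1,Y_2,Y_3)=1$, no $Y_i$ divides another, and let $M=\mathrm{lcm}(36,230,825)=227700$ (in your notation $p_3=6$, $p_2=23$, $p_1=55$). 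Then
\begin{equation*}
E=\frac{1}{227700}-\frac{1}{36}-\frac{1}{230}-\frac{1}{825}+\frac12+\frac13+\frac15-1
=\frac{1-6325-990-276+113850+75900+45540-227700}{227700}=0.
\end{equation*}
This configuration is realized inside the gcd-closed set $S=\{1,2,3,5,36,230,825,227700\}$, which satisfies $\max_{x\in S}\{|G_S(x)|\}=3$ (only $x=227700$ has three greatest-type divisors, namely $36,230,825$); it is exactly Hong's classical counterexample to the Bourque--Ligh conjecture. So $\alpha_{\frac{1}{\xi_1}}(227700)=0$, and the statement you were asked to prove is \emph{false} for $a=1$ with the bare hypotheses ``gcd closed and $\max_{x\in S}\{|G_S(x)|\}\le 3$''.

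There is accordingly no proof in the paper to compare against: Lemma~\ref{Lemma 2.8} is quoted from \cite{[H-AC06]} without proof, and the quotation must have dropped a hypothesis, since the result cannot hold as stated. What your analysis actually establishes, correctly, is the true dichotomy: nonvanishing holds for every $a\ge 1$ when $\max_{x\in S}\{|G_S(x)|\}\le 2$ (your $l_k\le 2$ argument), and for every $a\ge 2$ when $\max_{x\in S}\{|G_S(x)|\}\le 3$ (your bound $2\cdot 2^{-a}+3^{-a}+5^{-a}-1<0$, together with your positivity estimates when at most two $G_{ij}$ exceed $1$ --- note that there you need pairwise \emph{non-divisibility} of the $Y_i$, not mere distinctness). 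The paper's own use of the lemma is nevertheless safe, because Lemma~\ref{Lemma 2.9} is only ever applied to sets satisfying the condition $\mathcal G$, which excludes the fatal configuration: condition $\mathcal G$ forces $[y_i,y_j]=x_k$, whereas here $[36,230]=4140\ne 227700$; indeed, under condition $\mathcal G$ the nonvanishing of $\alpha_{\frac{1}{\xi_a}}(x_k)$ follows from the explicit factorization in the proof of Lemma~\ref{Lemma 2.14}(ii) together with the fact $x_{m_4}\ne x_{m_{ij}}$. So the correct fix is not to finish your case $\{2,3,5\}$ but to add to the statement one of the hypotheses ``$a\ge 2$'', ``$\max_{x\in S}\{|G_S(x)|\}\le 2$'', or ``$S$ satisfies the condition $\mathcal G$''.
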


\begin{lemma}\label{Lemma 2.9}
Let $S$ be a gcd-closed set satisfying $\max_{x\in S}
\{|G_S(x)|\}\le 3$. Then the $a$-th power LCM matrix $[S^a]$
is nonsingular and for all integers $i$ and $j$ with
$1\le i, j\le n$, one has
$$([S^a]^{-1})_{ij}:=\frac{1}{x_i^a x_j^a}{\underset{x_i|x_k\atop x_j|x_k}{\sum}}\frac{c_{ik}c_{jk}}{\alpha_{\frac{1}{\xi_a}}(x_k)}$$
with $c_{ij}$
being defined as in {\rm (\ref{eq2.1})} and $\alpha_{\frac{1}{\xi_a}}(x_k)$
being defined as in {\rm (\ref{eq2.3})}.
\end{lemma}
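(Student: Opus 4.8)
The plan is to strip off a diagonal factor so that the power LCM matrix becomes a power-GCD-type matrix for the reciprocal power function, after which the conclusion follows by assembling the preceding lemmas. First I would record the elementary identity $[x_i,x_j]^a=\frac{x_i^a x_j^a}{(x_i,x_j)^a}$, which comes from $[x_i,x_j](x_i,x_j)=x_ix_j$. Writing $D:=\mathrm{diag}(x_1^a,\dots,x_n^a)$ and letting $M$ be the matrix whose $(i,j)$-entry is $\frac{1}{\xi_a}((x_i,x_j))=(x_i,x_j)^{-a}$, this identity is precisely the factorization $[S^a]=DMD$. Thus $M=\big(\frac{1}{\xi_a}(S)\big)$ is the GCD matrix attached to the arithmetic function $\frac{1}{\xi_a}$, and the whole problem is transferred to $M$.

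Second, I would prove nonsingularity. Since $S$ is gcd closed, Lemma~\ref{Lemma 2.4} gives $\det[S^a]=\prod_{k=1}^n x_k^{2a}\,\alpha_{\frac{1}{\xi_a}}(x_k)$. The hypothesis $\max_{x\in S}\{|G_S(x)|\}\le 3$ lets me invoke Lemma~\ref{Lemma 2.8}, which guarantees $\alpha_{\frac{1}{\xi_a}}(x_k)\neq 0$ for each $k$; since also $x_k\ge 1$, it follows that $\det[S^a]\neq 0$, so $[S^a]$ is nonsingular. Because $D$ is obviously nonsingular, $M=D^{-1}[S^a]D^{-1}$ is nonsingular too, which is exactly what I need in order to apply Lemma~\ref{Lemma 2.1}.

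Third, I would compute the inverse. Applying Lemma~\ref{Lemma 2.1} to the arithmetic function $f=\frac{1}{\xi_a}$ (so that $\alpha_f=\alpha_{\frac{1}{\xi_a}}$, while the quantities $c_{ij}$ in \eqref{eq2.1} depend only on $S$) yields
$$(M^{-1})_{ij}=\sum_{\substack{x_i\mid x_k\\ x_j\mid x_k}}\frac{c_{ik}c_{jk}}{\alpha_{\frac{1}{\xi_a}}(x_k)}.$$
Finally, from $[S^a]^{-1}=D^{-1}M^{-1}D^{-1}$ and $D^{-1}=\mathrm{diag}(x_1^{-a},\dots,x_n^{-a})$ I read off $([S^a]^{-1})_{ij}=x_i^{-a}(M^{-1})_{ij}x_j^{-a}$, which is exactly the asserted formula.

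Every step is routine once the decomposition $[S^a]=DMD$ is in place; the sole genuine idea is this factorization, which converts an LCM matrix into a GCD matrix for $\frac{1}{\xi_a}$. The only role played by the hypothesis $\max_{x\in S}\{|G_S(x)|\}\le 3$ is to feed Lemma~\ref{Lemma 2.8} and thereby secure the nonvanishing of the $\alpha_{\frac{1}{\xi_a}}(x_k)$, so I do not anticipate any serious obstacle beyond checking these bookkeeping points carefully.
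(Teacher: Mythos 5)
Your proposal is correct and follows essentially the same route as the paper: the factorization $[S^a]=\mathrm{diag}(x_1^a,\dots,x_n^a)\cdot\big(\tfrac{1}{\xi_a}(x_i,x_j)\big)\cdot\mathrm{diag}(x_1^a,\dots,x_n^a)$, nonsingularity via Lemmas \ref{Lemma 2.4} and \ref{Lemma 2.8}, and then Lemma \ref{Lemma 2.1} applied to $f=\tfrac{1}{\xi_a}$. The only cosmetic difference is that the paper deduces $\det\big(\tfrac{1}{\xi_a}(x_i,x_j)\big)=\prod_{k=1}^n\alpha_{\frac{1}{\xi_a}}(x_k)\neq 0$ directly and then inverts, whereas you first conclude $[S^a]$ is nonsingular and transfer that to the reciprocal-power GCD matrix; both orderings are valid and rest on the same facts.
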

\begin{proof}
Since ${[x_i,x_j]}^a=x_i^ax_j^a/{(x_i,x_j)}^a$, we have
\begin{align}\label{eq2.5}
[S^a]={\rm diag}(x_1^a,\cdots,x_n^a)\cdot
\Big(\frac{1}{\xi_a}(x_i,x_j)\Big)\cdot{\rm diag}(x_1^a,\cdots,x_n^a).
\end{align}
It then follows that
\begin{align}\label{eq2.6}
\det[S^a]=\det\big(\frac{1}{\xi_a}(x_i,x_j)\big)\cdot
\prod\limits_{k=1}^nx_k^{2a}.
\end{align}
Then from (\ref{eq2.2}) and (\ref{eq2.6}), we can derive that $$\det\big(\frac{1}{\xi_a}(x_i,x_j)\big)
=\prod\limits_{k=1}^n\alpha_{\frac{1}{\xi_a}}(x_k).$$
Lemma \ref{Lemma 2.8} tells us that $\alpha_{\frac{1}{\xi_a}}(x_k)\neq0$
for all integers $k$ with $1\le k\le n$.
So the matrix $\big(\frac{1}{\xi_a}(x_i,x_j)\big)$
is nonsingular.

With Lemma \ref{Lemma 2.1} applied to $f=\frac{1}{\xi_a}$,
one gets that
\begin{align}\label{eq2.7}
\Big(\Big(\frac{1}{\xi_a}(x_i,x_j)\Big)^{-1}\Big)_{ij}
={\underset{x_i|x_k\atop x_j|x_k}
{\sum}}\frac{c_{ik}c_{jk}}{\alpha_{\frac{1}{\xi_a}}(x_k)}.
\end{align}
Therefore the required result follows immediately
from (\ref{eq2.5}) and (\ref{eq2.7}). So Lemma
\ref{Lemma 2.9} is proved.
\end{proof}

Now define the set $A_S(z,x):=\{u\in S: z|u|x, u\ne z\}$.

\begin{lemma}{\rm\cite{[ZCH-JCTA2022]}}\label{Lemma 2.10}
Let $S$ be a gcd-closed set and let $x\in S$ satisfy
$|G_S(x)|\ge 2$, $y\in G_S(x)$. Let $z\in S$ be such that
$z|x, z\neq x$ and $z\nmid y$. If $A_S(z,x)$ satisfies
the condition $\mathcal G$, then $[y,z]=x$.
\end{lemma}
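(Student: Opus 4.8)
The plan is to prove $[y,z]=x$ by combining greatest-type-divisor bookkeeping with an induction on the size of $A_S(z,x)$. Since $y\mid x$ and $z\mid x$, the integer $[y,z]$ automatically divides $x$, so only the reverse divisibility $x\mid[y,z]$ requires work; equivalently, writing $w:=[y,z]$, I must rule out that $w$ is a proper divisor of $x$. First I would record that $y$ and $z$ are incomparable: $z\nmid y$ is assumed, and $y\nmid z$ follows because $y\in G_S(x)$ together with $y\mid z\mid x$ and $z\ne x$ would force $z=y$, contradicting $z\nmid y$. Hence $w$ is a strict common multiple of both $y$ and $z$.

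Next I would locate a greatest-type divisor of $x$ lying above $z$. Among the elements $u\in S$ with $z\mid u\mid x$ and $u\ne x$ (a nonempty set, since it contains $z$), choose one, say $v$, that is maximal with respect to divisibility. Any $S$-element strictly between $v$ and $x$ is itself a multiple of $z$ and would violate the maximality of $v$, so $v\in G_S(x)$; moreover $z\mid v$ and, because $z\nmid y$, we have $v\ne y$. Thus $y$ and $v$ are two distinct greatest-type divisors of $x$. Invoking the condition $\mathcal G$ at $x$ then yields $[y,v]=x$ together with $g:=(y,v)\in G_S(y)\cap G_S(v)$; note in particular $g\mid y$ and $z\nmid g$, the latter because $z\mid g$ would give $z\mid y$.

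The heart of the argument is to transport the factor $v$ into $[y,z]$, which I would carry out by induction on $|A_S(z,x)|$ (equivalently, on the number of divisors of $x$ in $S$). If $z=v$, then $[y,z]=[y,v]=x$ and we are done; this also covers the base case $A_S(z,x)=\{x\}$, in which $z=v\in G_S(x)$. If $z\ne v$, then $z\mid v$, $z\ne v$, $z\nmid g$ and $g\in G_S(v)$, while $A_S(z,v)\subsetneq A_S(z,x)$ is a divisor-down-set of $A_S(z,x)$ and therefore inherits the condition $\mathcal G$. Applying the maximality step inside $v$ produces a greatest-type divisor of $v$ that is a multiple of $z$ and hence distinct from $g$, so $|G_S(v)|\ge 2$ and the inductive hypothesis applies to the triple $(v,g,z)$, giving $[g,z]=v$. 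Assembling the pieces, $g\mid y$ gives $v=[g,z]\mid[y,z]$, and combined with $y\mid[y,z]$ this yields $x=[y,v]\mid[y,z]$; together with the trivial $[y,z]\mid x$ this forces $[y,z]=x$.

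The step I expect to be the main obstacle is precisely this inductive bridging when $z$ is a proper divisor of the chosen greatest-type divisor $v$: one must check that the smaller configuration $(v,g,z)$ genuinely satisfies all hypotheses of the lemma — in particular that $g$ is a greatest-type divisor of $v$ with $z\nmid g$, that $|G_S(v)|\ge 2$, and that the condition $\mathcal G$ descends to $A_S(z,v)$ — so that the recursion is legitimate and terminates. The remaining bookkeeping (that every $S$-element between $v$ and $x$ is a multiple of $z$, and the bounds $g\mid y$, $z\nmid g$) is routine once this descent is set up correctly.
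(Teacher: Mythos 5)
The paper itself contains no proof of this lemma --- it is imported verbatim from \cite{[ZCH-JCTA2022]} --- so your argument has to be judged on its own terms, and on those terms the skeleton is sound and complete: the maximal element $v$ of $\{u\in S: z\mid u\mid x,\ u\neq x\}$ is indeed a greatest-type divisor of $x$ with $z\mid v$, hence $v\neq y$; granting $[y,v]=x$ and $g:=(y,v)\in G_S(y)\cap G_S(v)$, the descent to the triple $(v,g,z)$ is legitimate ($z\mid v$, $z\neq v$, $z\nmid g$, and $|G_S(v)|\ge 2$ by the same maximality device applied inside $v$), the quantity $|A_S(z,v)|$ strictly decreases, and the assembly $v=[g,z]\mid[y,z]$, hence $x=[y,v]\mid[y,z]\mid x$, closes the induction correctly.

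The step you must not pass over silently is the one you label ``invoking the condition $\mathcal G$ at $x$.'' With the definition of condition $\mathcal G$ used in this paper, the hypothesis that the \emph{set} $A_S(z,x)$ satisfies $\mathcal G$ constrains only pairs of greatest-type divisors computed \emph{inside} $A_S(z,x)$; since $z\nmid y$, the element $y$ does not belong to $A_S(z,x)$ at all, so the literal hypothesis says nothing about the pair of greatest-type divisors $y,v\in G_S(x)$ that you use. Read literally, the lemma is in fact false: take $S=\{1,2,3,12\}$, $x=12$, $y=2$, $z=3$; then $S$ is gcd closed, $G_S(12)=\{2,3\}$, all hypotheses hold, and $A_S(3,12)=\{12\}$ satisfies condition $\mathcal G$ vacuously, yet $[y,z]=6\neq 12$. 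What your proof actually establishes --- and what the source \cite{[ZCH-JCTA2022]} and the applications in Lemma \ref{Lemma 2.17} and Lemma \ref{Lemma 2.18} require, since there $S$ itself satisfies $\mathcal G$ --- is the lemma under the reading that every element of $A_S(z,x)$, in particular $x$ itself, satisfies condition $\mathcal G$ with its greatest-type divisors taken in $S$; this reading is also what passes to $A_S(z,v)\subseteq A_S(z,x)$ in your recursion. So your argument is correct once that interpretation is stated explicitly, but as written the appeal to $\mathcal G$ at $x$ is not licensed by the hypothesis in the form quoted above, and no argument could be, since that form of the statement is false.
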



\begin{lemma}\label{Lemma 2.12}
Let $a$ and $b$ be positive integers such that $a|b$. Let $S$
be a gcd-closed set and $x, y, z\in S$ with $G_S(x)=\{y\}$.

{\rm (i).} {\rm\cite[Lemma 2.5]{[Z-IJNT22]}} The integer $x^a-y^a$ divides each of $(x, z)^b-(y, z)^b$
and $[x,z]^b-[y,z]^b$.

{\rm (ii).} {\rm\cite[Lemma 2.8]{[ZL-BAMS22]}} If $r\in S$ and $r|x$, then $y^a[z,x]^b-x^a[z,y]^b$
is divisible by $r^a(y^a-x^a)$.
\end{lemma}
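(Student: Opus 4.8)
The plan is to reduce both parts to a single structural dichotomy for $(x,z)$ coming from $G_S(x)=\{y\}$ together with gcd-closedness, and then to apply elementary factorizations of integer powers. First I would record the key structural fact: if $G_S(x)=\{y\}$, then every $d\in S$ with $d\mid x$ and $d\neq x$ satisfies $d\mid y$. Indeed, among all $e\in S$ with $d\mid e\mid x$ and $e\neq x$, one that is maximal in the divisibility order is a greatest-type divisor of $x$ that is a multiple of $d$; since $y$ is the only such divisor, $d\mid y$. Applying this to $(x,z)$, which lies in $S$ because $S$ is gcd closed and which divides $x$, yields the dichotomy: either $(x,z)=x$ (equivalently $x\mid z$), or $(x,z)\neq x$, in which case $(x,z)\mid y$, so $(x,z)\mid(y,z)$; combined with the trivial $(y,z)\mid(x,z)$ this forces $(x,z)=(y,z)$. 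This dichotomy is the engine of the whole argument.

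For part (i) I would treat the two cases. If $x\mid z$, then $(y,z)=y$ and $(x,z)=x$, so $(x,z)^b-(y,z)^b=x^b-y^b$, while $[x,z]=[y,z]=z$ gives $[x,z]^b-[y,z]^b=0$; both are divisible by $x^a-y^a$ because $a\mid b$ implies $x^a-y^a\mid x^b-y^b$. If instead $(x,z)=(y,z)=:w$, then $(x,z)^b-(y,z)^b=0$, whereas $[x,z]^b-[y,z]^b=(z/w)^b(x^b-y^b)$ with $z/w$ a positive integer, again a multiple of $x^a-y^a$. This settles both divisibilities in (i).

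For part (ii) write $E:=y^a[z,x]^b-x^a[z,y]^b$; the goal is $r^a(x^a-y^a)\mid E$ (the sign of $y^a-x^a$ being irrelevant). In the case $x\mid z$ one computes $E=-z^b(x^a-y^a)$, and since $r\mid x\mid z$ gives $r^a\mid z^a\mid z^b$, the factor $r^a$ is supplied. In the case $(x,z)=(y,z)=w$ the crucial identity is
\begin{align*}
E=(z/w)^b\,\bigl(y^ax^b-x^ay^b\bigr)=(z/w)^b\,x^ay^a\bigl(x^{\,b-a}-y^{\,b-a}\bigr),
\end{align*}
and since $a\mid b$ one has $x^a-y^a\mid x^{\,b-a}-y^{\,b-a}$. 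Here the factor $x^a$ carries the needed $r^a$ because $r\mid x$, while the factor $x^{\,b-a}-y^{\,b-a}$ carries $x^a-y^a$; as these sit on distinct factors of the product, their product $r^a(x^a-y^a)$ divides $E$.

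I expect the main obstacle to be precisely this last point in (ii). After the reductions one naturally proves $r^a\mid E$ and $x^a-y^a\mid E$ separately, but $r^a$ and $x^a-y^a$ need not be coprime (for instance when $r$ and $y$ share a prime factor), so their least common multiple is strictly smaller than their product and the naive combination fails to give $r^a(x^a-y^a)\mid E$. The resolution is to retain the explicit factorization of $E$ in each case, so that $r^a$ and $x^a-y^a$ are exhibited as divisors of two \emph{separate} factors; this is why the dichotomy and the identity $y^ax^b-x^ay^b=x^ay^a(x^{\,b-a}-y^{\,b-a})$ must be set up in advance rather than invoked only after establishing the two divisibilities individually.
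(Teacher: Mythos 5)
Your proof is correct, but there is nothing in this paper to compare it against: Lemma 2.12 is stated here without proof, both parts being imported verbatim from the cited references (part (i) from \cite[Lemma 2.5]{[Z-IJNT22]}, part (ii) from \cite[Lemma 2.8]{[ZL-BAMS22]}), so your argument serves as a self-contained replacement rather than an alternative to an in-paper proof. The engine of your argument is sound: the claim that $G_S(x)=\{y\}$ forces every $d\in S$ with $d\mid x$, $d\ne x$ to divide $y$ is correctly justified by passing to a divisibility-maximal element of $\{e\in S: d\mid e\mid x,\ e\ne x\}$, which is then a greatest-type divisor of $x$ and hence equals $y$; applied to $(x,z)\in S$ (gcd-closedness is used exactly here) this gives the dichotomy $x\mid z$ or $(x,z)=(y,z)$, and the rest is the elementary divisibility $u^a-v^a\mid u^{ak}-v^{ak}$ together with the identity $y^ax^b-x^ay^b=x^ay^a\bigl(x^{b-a}-y^{b-a}\bigr)$ (with the degenerate case $b=a$ giving $0$, which is harmless). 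Your closing remark is also the right point of emphasis: since $r^a$ and $x^a-y^a$ need not be coprime, proving $r^a\mid E$ and $(x^a-y^a)\mid E$ separately would only yield divisibility by their least common multiple, so exhibiting them as divisors of distinct factors in an explicit factorization of $E$ is essential, and this is in the same spirit as the arguments in the cited sources, which rest on the same dichotomy for $(x,z)$.
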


\begin{lemma}{\rm\cite{[WZ]}}\label{Lemma 2.13}
Let $a$ and $b$ be positive integers with $a|b$ and let $S$ be a gcd-closed set.
For $x,z\in S$ with $|G_S(x)|=2$, let
$G_S(x)=\{y_1, y_2\}$ and $y_3:=(y_1, y_2)$.
Assume that the set $\{u\in S: (x,z)|u|x\}$ satisfies
the condition $\mathcal G$.
Then each of the following is true:

{\rm (i).} The integer $x^a+y_3^a-y_1^a-y_2^a$ divides
each of $(z,x)^b+(z,y_3)^b-(z,y_1)^b-(z,y_2)^b$ and
$[z,x]^b+[z,y_3]^b-[z,y_1]^b-[z,y_2]^b$.

{\rm (ii).} For any $r\in S$ with $r|x$,
$x^a[z,y_3]^b+y_3^a[z,x]^b-y_1^a[z,y_2]^b-y_2^a[z,y_1]^b$
is divisible by $r^a(x^a+y_3^a-y_1^a-y_2^a)$.
\end{lemma}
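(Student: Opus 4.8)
The plan is to recast every quantity in the statement into a single product shape and then read off the divisibilities from $a\mid b$. The starting point is that $x$ itself lies in $\{u\in S:(x,z)\mid u\mid x\}$, so the hypothesis forces $x$ to satisfy the condition $\mathcal G$ in $S$; hence $x=[y_1,y_2]$ and $y_3=(y_1,y_2)\in G_S(y_1)\cap G_S(y_2)$. Writing $y_1=y_3u$, $y_2=y_3v$ with $(u,v)=1$ gives $x=y_3uv$, and a direct computation yields the factorization $x^c+y_3^c-y_1^c-y_2^c=y_3^c(u^c-1)(v^c-1)$ for every $c\ge1$. In particular the divisor in the statement is $y_3^a(u^a-1)(v^a-1)$.

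Next I would record four elementary distributivity identities, each proved by comparing, for an arbitrary prime $p$, the exponent $v_p(\cdot)$ on both sides (using that $\min$ and $\max$ distribute over one another): $(z,x)=[(z,y_1),(z,y_2)]$, $(z,y_3)=((z,y_1),(z,y_2))$, $[z,x]=[[z,y_1],[z,y_2]]$ and $[z,y_3]=([z,y_1],[z,y_2])$. Setting $g=[z,y_3]$, $\sigma=[z,y_1]/[z,y_3]$, $\tau=[z,y_2]/[z,y_3]$ (and their gcd analogues), these identities turn both targets in (i) into the common shape $g^{b}(\sigma^{b}-1)(\tau^{b}-1)$, while the mixed expression in (ii), after extracting $y_3^a$, collapses to
\[
x^a[z,y_3]^b+y_3^a[z,x]^b-y_1^a[z,y_2]^b-y_2^a[z,y_1]^b=y_3^a\,g^{b}\,(u^a-\sigma^b)(v^a-\tau^b).
\]
Thus (i) reduces to $y_3^a(u^a-1)(v^a-1)\mid g^b(\sigma^b-1)(\tau^b-1)$ and (ii) to $r^a(u^a-1)(v^a-1)\mid g^b(u^a-\sigma^b)(v^a-\tau^b)$.

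The key simplification is that $\sigma,\tau$ (and their gcd versions) depend only on $w:=(z,x)\in S$, because at any prime with $v_p(z)\ge v_p(x)$ both $[z,y_1]$ and $[z,y_3]$ acquire the same exponent $v_p(z)$, so $z$ may be replaced by $w$; and $w\mid x$. Since $|G_S(x)|=2$, either $w=x$ (equivalently $x\mid z$) or $w$ is a proper divisor of $x$ in $S$ and hence divides one of the two greatest-type divisors $y_1,y_2$. This yields the exhaustive case split according to whether $w\mid y_1$ and/or $w\mid y_2$. When $w$ divides exactly one of them, say $w\mid y_1$ but $w\nmid y_2$, I would invoke Lemma \ref{Lemma 2.10} (its hypothesis holds because $A_S(w,x)\subseteq\{u\in S:(x,z)\mid u\mid x\}$ satisfies $\mathcal G$ and $|G_S(x)|\ge2$) to conclude $[w,y_2]=x$; reading this equality prime by prime pins the exponents of $w$ and forces each of $\sigma,\tau$ to equal $1$, $u$, or $v$. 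In every resulting case one of the factors $(\sigma^b-1),(\tau^b-1)$ vanishes, or the expression becomes $g^b(u^b-1)(v^b-1)$, and part (i) drops out from $u^a-1\mid u^b-1$, $v^a-1\mid v^b-1$ together with $y_3\mid g$.

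The main obstacle is part (ii), where the extra factors $r^a$ and $g^b$ must absorb the discrepancy between $(u^a-\sigma^b)(v^a-\tau^b)$ and $(u^a-1)(v^a-1)$. Using $a\mid b$ and $\sigma\in\{1,u\}$, $\tau\in\{1,v\}$, one writes each $u^a-\sigma^b$ as $(u^a-1)$ times an integer (via $u^a\equiv u^b\equiv1\pmod{u^a-1}$), after which the claim reduces to a purely local inequality of the form $a\,v_p(r)\le b\,v_p(g)+a\,v_p(uv)$ at each prime $p$. The naive estimate $v_p(r)\le v_p(x)=v_p(y_3)+v_p(u)+v_p(v)$ with $v_p(g)\ge v_p(y_3)$ just fails at primes where $y_3$ is a $p$-unit but $u$ (or $v$) is not; precisely there, the equality $[w,y_2]=x$ delivered by Lemma \ref{Lemma 2.10} forces $v_p(z)\ge v_p(y_1)$, whence $v_p(g)=v_p([z,y_3])\ge v_p(y_1)=v_p(x)$, supplying the missing exponent. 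Carrying out this valuation bookkeeping in each of the four cases is the technical heart of the argument, mirroring Lemma \ref{Lemma 2.12}(ii); the gcd part of (i) is the easiest instance and the mixed identity in (ii) the most delicate.
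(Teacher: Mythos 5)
The paper does not actually prove Lemma \ref{Lemma 2.13} (it is quoted from \cite{[WZ]}), but your argument is correct and runs along essentially the same lines as the paper's proofs of its directly analogous results (Lemma \ref{Lemma 2.14} and Lemmas \ref{Lemma 2.17} and \ref{Lemma 2.18}): extract $x=[y_1,y_2]$ and $y_3\in G_S(y_1)\cap G_S(y_2)$ from condition $\mathcal G$, factor the alternating sums into products of terms of the form $t^c-1$ (your $y_3^c(u^c-1)(v^c-1)$ matches the paper's $\lambda,\delta$-ratio computations), reduce to $w=(z,x)$, case-split according to whether $w\mid y_1$ and/or $w\mid y_2$, invoke Lemma \ref{Lemma 2.10} in the mixed case, and finish with the divisibilities supplied by $a\mid b$. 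The only point worth flagging is your opening inference, which tacitly reads the hypothesis as ``every element of $\{u\in S:(x,z)\mid u\mid x\}$ satisfies condition $\mathcal G$ with greatest-type divisors computed in $S$''; this is indeed the reading under which the lemma (and Lemma \ref{Lemma 2.10}) is true and under which the paper applies it in Theorems \ref{Theorem 1.2} and \ref{Theorem 1.3} (under the literal subset-intrinsic reading, $S=\{1,2,3,12,24\}$, $x=12$, $z=24$, $a=1$, $b=2$ would falsify part (i), since the set $\{12\}$ vacuously satisfies $\mathcal G$ yet $8\nmid 132$), so your proof is sound.
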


\begin{lemma}\label{Lemma 2.14}
Let $a|b$ and let $S$ be a gcd-closed set satisfying
the condition $\mathcal G$. Let $x_m\in S$ with $G_S(x_m)=\{x_{m_1},x_{m_2},x_{m_3}\}$.
Define $x_{m_4}:=(x_{m_1},x_{m_2},x_{m_3})$ and
$x_{m_{ij}}:=(x_{m_i},x_{m_j})$ for $1\le i<j\le 3$.

{\rm (i).} The integer $x_m^a-x_{m_1}^a-x_{m_2}^a-x_{m_3}^a+x_{m_{12}}^a
+x_{m_{13}}^a+x_{m_{23}}^a-x_{m_4}^a$
divides $x_m^b-x_{m_1}^b-x_{m_2}^b-x_{m_3}^b+x_{m_{12}}^b
+x_{m_{13}}^b+x_{m_{23}}^b-x_{m_4}^b$.

{\rm (ii).} For any $x_t\in S$ with $x_t|x_m$, the fraction
$$
\frac{x_m^{b-a}-x_{m_1}^{b-a}-x_{m_2}^{b-a}-x_{m_3}^{b-a}+x_{m_{12}}^{b-a}
+x_{m_{13}}^{b-a}+x_{m_{23}}^{b-a}-x_{m_4}^{b-a}}
{x_t^a(x_m^{-a}-x_{m_1}^{-a}-x_{m_2}^{-a}-x_{m_3}^{-a}+x_{m_{12}}^{-a}
+x_{m_{13}}^{-a}+x_{m_{23}}^{-a}-x_{m_4}^{-a})}
$$
is an integer.
\end{lemma}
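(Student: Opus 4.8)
The plan is to show that condition $\mathcal G$ pins the eight integers in the statement into a rigid multiplicative pattern, and then to read off both divisibilities from the elementary fact that $d^a-1\mid d^b-1$ whenever $a\mid b$. By Lemma \ref{Lemma 2.5} the two alternating sums occurring here are precisely $\alpha_{\xi_s}(x_m)$ and, at $s=-a$, $\alpha_{\frac{1}{\xi_a}}(x_m)$, so the factorization obtained below also re-derives their nonvanishing (Lemma \ref{Lemma 2.8}) in the present setting.

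First I would extract from condition $\mathcal G$ the key multiplicative relations. Since any two distinct greatest-type divisors of $x_m$ satisfy $[x_{m_i},x_{m_j}]=x_m$, the identity $[u,v](u,v)=uv$ gives at once $x_{m_{ij}}=x_{m_i}x_{m_j}/x_m$ for $1\le i<j\le 3$. The genuinely new relation is $x_{m_4}=x_{m_1}x_{m_2}x_{m_3}/x_m^2$, and proving it is the main obstacle. I would argue prime by prime: writing $M=v_p(x_m)$ and $a_i=v_p(x_{m_i})$, note $a_i\le M$ since $x_{m_i}\mid x_m$, while the conditions $[x_{m_i},x_{m_j}]=x_m$ say exactly that $\max(a_i,a_j)=M$ for every pair. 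This forces at most one of $a_1,a_2,a_3$ to lie strictly below $M$, so the two largest both equal $M$ and $\min(a_1,a_2,a_3)=a_1+a_2+a_3-2M$; that is, $v_p(x_{m_4})=v_p\!\big(x_{m_1}x_{m_2}x_{m_3}/x_m^2\big)$ at every prime. Only the lcm half of condition $\mathcal G$ is needed for this.

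With these relations in hand I would set $d_i:=x_m/x_{m_i}\in\mathbb Z_{\ge 2}$ and establish the factorization identity, valid for every exponent $s$,
\[
x_m^s-\sum_{i=1}^3 x_{m_i}^s+\sum_{1\le i<j\le 3}x_{m_{ij}}^s-x_{m_4}^s
=x_m^s\prod_{i=1}^3\bigl(1-d_i^{-s}\bigr)
=x_{m_4}^s\prod_{i=1}^3\bigl(d_i^s-1\bigr),
\]
obtained by substituting $x_{m_i}^s=x_m^s d_i^{-s}$, $x_{m_{ij}}^s=x_m^s(d_id_j)^{-s}$ and $x_{m_4}^s=x_m^s(d_1d_2d_3)^{-s}$, recognizing the expanded product $\prod_i(1-d_i^{-s})$, and using $d_1d_2d_3=x_m/x_{m_4}$ for the last equality. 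Specializing $s=a$ and $s=b$ produces the divisor and dividend of part (i); specializing $s=-a$ gives the denominator factor $x_m^{-a}\prod_i(1-d_i^{a})=-x_m^{-a}\prod_i(d_i^a-1)$; and $s=b-a$ gives the numerator of part (ii).

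Both claims then fall out formally. For (i), the ratio of the $s=b$ and $s=a$ forms is $x_{m_4}^{b-a}\prod_{i=1}^3\frac{d_i^b-1}{d_i^a-1}$, an integer because $a\mid b$ yields $d_i^a-1\mid d_i^b-1$ and $x_{m_4}^{b-a}\in\mathbb Z$. For (ii), combining the factorizations of numerator and denominator collapses the fraction to $-\dfrac{x_m^a\,x_{m_4}^{b-a}}{x_t^a}\prod_{i=1}^3\frac{d_i^{b-a}-1}{d_i^a-1}$; here $x_t\mid x_m$ makes $x_m^a/x_t^a$ an integer, $x_{m_4}^{b-a}$ is an integer, and $a\mid(b-a)$ (as $b-a=a(b/a-1)$) makes each $\frac{d_i^{b-a}-1}{d_i^a-1}$ an integer, so the whole expression is an integer as required. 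The only step demanding actual work is the prime-by-prime verification of $x_{m_4}=x_{m_1}x_{m_2}x_{m_3}/x_m^2$; once the factorization is in place, everything downstream is routine.
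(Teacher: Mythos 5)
Your proof is correct, and its endgame coincides with the paper's: factor the alternating sum into three binomial factors and conclude from $d^a-1\mid d^b-1$ (together with $a\mid b-a$ and $x_t^a\mid x_m^a$) for $a\mid b$. Where you genuinely diverge is in how the multiplicative rigidity is established. The paper invokes the full condition $\mathcal G$, including its gcd half: from $x_{m_{12}},x_{m_{13}}\in G_S(x_{m_1})$ (and likewise for $x_{m_2}$, $x_{m_3}$) and condition $\mathcal G$ applied to the $x_{m_i}$ themselves, it gets $[x_{m_{12}},x_{m_{13}}]=x_{m_1}$, $[x_{m_{12}},x_{m_{23}}]=x_{m_2}$, $[x_{m_{13}},x_{m_{23}}]=x_{m_3}$, extracts the proportionality relations, derives \eqref{eq2.8}--\eqref{eq2.11}, and then carries out the factorization algebra twice, once for positive and once for negative exponents. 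You use only the top-level lcm conditions $[x_{m_i},x_{m_j}]=x_m$: the identity $(u,v)[u,v]=uv$ gives $x_{m_{ij}}=x_{m_i}x_{m_j}/x_m$, and your prime-valuation argument (for each prime, $\max(a_i,a_j)=M$ for all three pairs forces at most one $a_i<M$, hence $\min(a_1,a_2,a_3)=a_1+a_2+a_3-2M$) correctly yields $x_{m_4}=x_{m_1}x_{m_2}x_{m_3}/x_m^2$; this recovers exactly the paper's relations but from a strictly weaker hypothesis, since the gcd half of $\mathcal G$ is never used. Your payoff is the single symmetric identity $x_m^s-\sum_{i}x_{m_i}^s+\sum_{i<j}x_{m_{ij}}^s-x_{m_4}^s=x_{m_4}^s\prod_{i=1}^3(d_i^s-1)$ with $d_i:=x_m/x_{m_i}\ge 2$, valid for every exponent $s$, so that parts (i) and (ii) --- and the nonvanishing of both denominators, i.e.\ the relevant case of Lemma \ref{Lemma 2.8} --- all follow by specializing $s=b,a,b-a,-a$; your closed forms for the two quotients agree with the paper's. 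What the paper's heavier derivation buys in return is that the proportionality relations it extracts are precisely the ones reused later (the ratios $\lambda$ and $\delta$ in the proofs of Lemmas \ref{Lemma 2.17} and \ref{Lemma 2.18}), so its bookkeeping serves the rest of the paper; as a standalone proof of this lemma, yours is leaner and marginally more general.
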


\begin{proof}
Since $S$ satisfies the condition $\mathcal G$,
we obtain that
$[x_{m_i}, x_{m_j}]=x_m$ for $1\le i<j\le 3$,
$\{x_{m_{12}}, x_{m_{13}}\}\subseteq G_S(x_{m_1})$,
$\{x_{m_{13}}, x_{m_{23}}\}\subseteq G_S(x_{m_3})$
and $\{x_{m_{12}}, x_{m_{23}}\}\subseteq G_S(x_{m_2})$.
Further, we can get that $[x_{m_{12}}, x_{m_{13}}]=x_{m_1}$,
$[x_{m_{13}}, x_{m_{23}}]=x_{m_3}$
and $[x_{m_{12}}, x_{m_{23}}]=x_{m_2}$.
This implies that
$$
\frac{x_m}{x_{m_2}}=\frac{x_{m_1}}{x_{m_{12}}}
=\frac{x_{m_{13}}}{x_{m_4}}=\frac{x_{m_3}}{x_{m_{23}}}
$$
and
$$
\frac{x_m}{x_{m_1}}=\frac{x_{m_2}}{x_{m_{12}}}
=\frac{x_{m_{23}}}{x_{m_4}}.
$$
We can derive that
\begin{align}
&x_{m_{12}}=\frac{x_{m_1}x_{m_2}}{x_m},\label{eq2.8}\\
&x_{m_{13}}=x_{m_4}\cdot \frac{x_m}{x_{m_2}},\label{eq2.9}\\
&x_{m_{23}}=x_{m_4}\cdot \frac{x_m}{x_{m_1}},\label{eq2.10}\\
&x_{m_3}=x_{m_{23}}\cdot\frac{x_m}{x_{m_2}}=x_{m_4}\cdot \frac{x_m}{x_{m_1}}\cdot\frac{x_m}{x_{m_2}}.\label{eq2.11}
\end{align}
It readily follows from \eqref{eq2.8} to \eqref{eq2.11} that
\begin{align*}
&x_m^a-x_{m_1}^a-x_{m_2}^a-x_{m_3}^a+x_{m_{12}}^a+x_{m_{13}}^a+x_{m_{23}}^a-x_{m_4}^a\\
=&x_m^a-x_{m_1}^a-x_{m_2}^a-\big(\frac{x_{m_4}x_m^2}{x_{m_1}x_{m_2}}\big)^a
+\big(\frac{x_{m_1}x_{m_2}}{x_m}\big)^a+\big(\frac{x_mx_{m_4}}{x_{m_2}}\big)^a
+\big(\frac{x_mx_{m_4}}{x_{m_1}}\big)^a-x_{m_4}^a\\
=&x_m^a-x_{m_1}^a-x_{m_2}^a+\big(\frac{x_{m_1}x_{m_2}}{x_m}\big)^a
+x_{m_4}^a\Big(\big(\frac{x_m}{x_{m_1}}\big)^a+\big(\frac{x_m}{x_{m_2}}\big)^a
-\big(\frac{x_m^2}{x_{m_1}x_{m_2}}\big)^a-1\Big)\\
=&\frac{x_m^{2a}-x_m^ax_{m_1}^a-x_m^ax_{m_2}^a+x_{m_1}^ax_{m_2}^a}{x_m^a}
-\frac{x_m^{2a}-x_m^ax_{m_1}^a-x_m^ax_{m_2}^a+x_{m_1}^ax_{m_2}^a}{x_{m_1}^ax_{m_2}^a}\cdot x_{m_4}^a\\
=&(x_m^{2a}-x_m^ax_{m_1}^a-x_m^ax_{m_2}^a+x_{m_1}^ax_{m_2}^a)\Big(\frac{1}{x_m^a}-\frac{x_{m_4}^a}{x_{m_1}^ax_{m_2}^a}\Big)\\
=&(x_m^a-x_{m_2}^a)(x_m^a-x_{m_1}^a)\frac{x_{m_1}^ax_{m_2}^a-x_m^ax_{m_4}^a}{x_m^ax_{m_1}^ax_{m_2}^a}\\
=&\Big(\Big(\frac{x_m}{x_{m_2}}\Big)^a-1\Big)\Big(\Big(\frac{x_m}{x_{m_1}}\Big)^a-1\Big)
\Big(\Big(\frac{x_{m_1}x_{m_2}}{x_m}\Big)^a-x_{m_4}^a\Big).
\end{align*}
Therefore
\begin{align*}
&\frac{x_m^b-x_{m_1}^b-x_{m_2}^b-x_{m_3}^b+x_{m_{12}}^b+x_{m_{13}}^b+x_{m_{23}}^b-x_{m_4}^b}
{x_m^a-x_{m_1}^a-x_{m_2}^a-x_{m_3}^a+x_{m_{12}}^a+x_{m_{13}}^a+x_{m_{23}}^a-x_{m_4}^a}\\
=&\frac{\big(\frac{x_m}{x_{m_2}}\big)^b-1}{\big(\frac{x_m}{x_{m_2}}\big)^a-1}\cdot
\frac{\big(\frac{x_m}{x_{m_1}}\big)^b-1}{\big(\frac{x_m}{x_{m_1}}\big)^a-1}
\cdot\frac{\big(\frac{x_{m_1}x_{m_2}}{x_m}\big)^b-x_{m_4}^b}
{\big(\frac{x_{m_1}x_{m_2}}{x_m}\big)^a-x_{m_4}^a}\in\mathbb Z.
\end{align*}
Part (i) is proved.

On the other hand, by \eqref{eq2.8} to \eqref{eq2.11}, we have
\begin{small}
\begin{align*}
&x_m^{-a}-x_{m_1}^{-a}-x_{m_2}^{-a}-x_{m_3}^{-a}+x_{m_{12}}^{-a}+x_{m_{13}}^{-a}
+x_{m_{23}}^{-a}-x_{m_4}^{-a}\\
=&\frac{1}{x_m^a}-\frac{1}{x_{m_1}^a}-\frac{1}{x_{m_2}^a}
-\big(\frac{x_{m_1}x_{m_2}}{x_{m_4}x_m^2}\big)^a
+\big(\frac{x_m}{x_{m_1}x_{m_2}}\big)^a
+\big(\frac{x_{m_2}}{x_{m_4}x_m}\big)^a+\big(\frac{x_{m_1}}
{x_{m_4}x_m}\big)^a-\frac{1}{x_{m_4}^a}\\
=&\frac{1}{x_m^a}-\frac{1}{x_{m_1}^a}-\frac{1}{x_{m_2}^a}
+\big(\frac{x_m}{x_{m_1}x_{m_2}}\big)^a
-\Big(\frac{x_{m_1}^ax_{m_2}^a}{x_m^{2a}}-\frac{x_{m_2}^a}
{x_m^a}-\frac{x_{m_1}^a}{x_m^a}+1\Big)
\cdot \frac{1}{x_{m_4}^a}\\
=&\frac{x_{m_1}^ax_{m_2}^a-x_m^ax_{m_2}^a-x_m^ax_{m_1}^a
+x_m^{2a}}{x_m^ax_{m_1}^ax_{m_2}^a}
-\Big(\frac{x_{m_1}^ax_{m_2}^a-x_m^ax_{m_2}^a-x_m^ax_{m_1}^a
+x_m^{2a}}{x_m^{2a}}\Big)\cdot \frac{1}{x_{m_4}^a}\\
=&(x_{m_1}^ax_{m_2}^a-x_m^ax_{m_2}^a-x_m^ax_{m_1}^a+x_m^{2a})
\Big(\frac{1}{x_m^ax_{m_1}^ax_{m_2}^a}-\frac{1}{x_m^{2a}x_{m_4}^a}\Big)\\
=&(x_m^a-x_{m_2}^a)(x_m^a-x_{m_1}^a)\frac{x_m^ax_{m_4}^a-x_{m_1}^ax_{m_2}^a}
{x_m^{2a}x_{m_1}^ax_{m_2}^ax_{m_4}^a}\\
=&\Big(\big(\frac{x_m}{x_{m_2}}\big)^a-1\Big)
\Big(\Big(\frac{x_m}{x_{m_1}}\Big)^a-1\Big)
\Big(x_{m_4}^a-\Big(\frac{x_{m_1}x_{m_2}}{x_m}\Big)^a\Big)
\frac{1}{x_m^ax_{m_4}^a}.
\end{align*}\end{small}
Hence
\begin{small}
\begin{align*}
&\frac{x_m^{b-a}-x_{m_1}^{b-a}-x_{m_2}^{b-a}-x_{m_3}^{b-a}+x_{m_{12}}^{b-a}
+x_{m_{13}}^{b-a}+x_{m_{23}}^{b-a}-x_{m_4}^{b-a}}
{x_t^a(x_m^{-a}-x_{m_1}^{-a}-x_{m_2}^{-a}-x_{m_3}^{-a}+x_{m_{12}}^{-a}
+x_{m_{13}}^{-a}+x_{m_{23}}^{-a}-x_{m_4}^{-a})}\\
=&-\frac{\big(\frac{x_m}{x_{m_2}}\big)^{b-a}-1}
{\big(\frac{x_m}{x_{m_2}}\big)^a-1}\cdot
\frac{\big(\frac{x_m}{x_{m_1}}\big)^{b-a}-1}
{\big(\frac{x_m}{x_{m_1}}\big)^a-1}
\cdot \frac{\big(\frac{x_{m_1}x_{m_2}}
{x_m}\big)^{b-a}-x_{m_4}^{b-a}}
{\big(\frac{x_{m_1}x_{m_2}}{x_m}\big)^a-x_{m_4}^a}
\cdot\Big(\frac{x_m}{x_t}\Big)^ax_{m_4}^a\in\mathbb Z.
\end{align*}
\end{small}
This finishes the proof of part (ii) and that of Lemma
\ref{Lemma 2.14}.
\end{proof}

The following fact is taken from the proof of Theorem 1.3
in \cite{[ZCH-JCTA2022]}.

\begin{lemma} \label{Lemma 2.16}
Let $S$ be a gcd-closed set satisfying the condition
$\mathcal{G}$ with $x_l, x_m\in S$. Let
$G_S(x_m):=\{x_{m_1},x_{m_2},x_{m_3}\}, x_{m_{ij}}
:=(x_{m_i},x_{m_j})\ \hbox{for}\ 1\le i<j\le 3$
and $x_{m_4}:=(x_{m_1},x_{m_2},x_{m_3})$. If
$(x_l, x_m)|x_{m_1}$, $(x_l, x_m)\ne x_{m_1}$, $(x_l, x_m)
\nmid x_{m_2}$ and $(x_l, x_m)\nmid x_{m_3}$, then
\begin{align}
&(x_l,x_{m_3})=(x_l, x_{m_{13}}),\label{eq2.12}\\
&[x_l,x_m]=[x_l,x_{m_2}],\label{eq2.13}\\
&[x_l,x_{m_1}]=[x_l,x_{m_{12}}],\label{eq2.14}\\
&[x_l, x_{m_3}]=[x_l, x_{m_{23}}],\label{eq2.15}\\
&[x_l, x_{m_{13}}]=[x_l, x_{m_4}].\label{eq2.16}
\end{align}

\end{lemma}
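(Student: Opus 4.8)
The plan is to verify each of the five identities \eqref{eq2.12}--\eqref{eq2.16} prime by prime, comparing $p$-adic valuations on the two sides. Throughout write $v_p$ for the $p$-adic valuation and abbreviate $\alpha:=v_p(x_m)$, $\beta_i:=v_p(x_{m_i})$ $(1\le i\le 3)$, $\phi:=v_p(x_{m_4})$ and $\gamma:=v_p(x_l)$. Since gcd's and lcm's act valuation-wise, each identity is equivalent to a family (over all primes $p$) of elementary $\min$/$\max$ relations among these exponents; for instance \eqref{eq2.13} reads $\max(\gamma,\alpha)=\max(\gamma,\beta_2)$, i.e. $\alpha\le\max(\gamma,\beta_2)$, while \eqref{eq2.12} reads $\min(\gamma,\beta_3)\le\beta_1$. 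The whole argument therefore reduces to producing, at each prime, enough information about the exponent pattern $(\alpha,\beta_1,\beta_2,\beta_3,\phi,\gamma)$.

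The structural input comes from the condition $\mathcal G$. As recorded in the proof of Lemma \ref{Lemma 2.14}, it forces $[x_{m_i},x_{m_j}]=x_m$ for all $1\le i<j\le 3$, i.e. $\max(\beta_i,\beta_j)=\alpha$ for every pair. Hence at each prime $p$ \emph{at most one} of $\beta_1,\beta_2,\beta_3$ is strictly smaller than $\alpha$, and when exactly one is, it coincides with $\phi=\min(\beta_1,\beta_2,\beta_3)$ while the other two equal $\alpha$. This splits the primes into four local types: a generic one with $\beta_1=\beta_2=\beta_3=\alpha$, and three types in which precisely $\beta_1$, $\beta_2$, or $\beta_3$ drops to $\phi<\alpha$. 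The hypothesis $(x_l,x_m)\mid x_{m_1}$ yields, at every prime, $\min(\gamma,\alpha)\le\beta_1$; in the type where $\beta_1=\phi<\alpha$ this already forces $\gamma\le\beta_1$.

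The one place where a global (not merely local) fact is needed is the type in which $\beta_2=\phi<\alpha=\beta_1=\beta_3$: there I must know that $\gamma\ge\alpha$, equivalently that $[(x_l,x_m),x_{m_2}]=x_m$. This is exactly the output of Lemma \ref{Lemma 2.10} applied with $x=x_m$, $y=x_{m_2}$ and $z:=(x_l,x_m)$. To invoke it I first note that $z\in S$ because $S$ is gcd closed, that $z\mid x_{m_1}\mid x_m$ with $z\ne x_m$, and that $z\nmid x_{m_2}$. The remaining, and main, obstacle is to verify that $A_S(z,x_m)$ satisfies the condition $\mathcal G$. Here I would use that, among the greatest-type divisors $x_{m_1},x_{m_2},x_{m_3}$ of $x_m$, only $x_{m_1}$ is divisible by $z$ (by $(x_l,x_m)\nmid x_{m_2}$ and $(x_l,x_m)\nmid x_{m_3}$), while $z\ne x_{m_1}$ keeps $x_{m_1}$ inside $A_S(z,x_m)$. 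Consequently $x_{m_1}$ is the \emph{unique} greatest-type divisor of $x_m$ in $A_S(z,x_m)$, so every proper divisor of $x_m$ lying in $A_S(z,x_m)$ in fact divides $x_{m_1}$; thus $A_S(z,x_m)=A_S(z,x_{m_1})\cup\{x_m\}$ and the condition $\mathcal G$ for $A_S(z,x_m)$ reduces to that for the divisor interval below $x_{m_1}$, which it inherits from $S$. With Lemma \ref{Lemma 2.10} in hand I obtain $[z,x_{m_2}]=x_m$, i.e. $\max(\min(\gamma,\alpha),\beta_2)=\alpha$ at every prime, which in the critical type ($\beta_2<\alpha$) is precisely $\gamma\ge\alpha$.

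It then remains to run the four local types through the five $\min$/$\max$ relations. In the generic type all of $\beta_1,\beta_2,\beta_3$ equal $\alpha$ and every relation is immediate; in the type $\beta_1=\phi$ one uses $\gamma\le\beta_1$; in the type $\beta_2=\phi$ one uses $\gamma\ge\alpha$ from the previous paragraph; and in the type $\beta_3=\phi$ each relation holds outright. Carrying this out confirms \eqref{eq2.12}--\eqref{eq2.16} simultaneously; notably \eqref{eq2.12} uses only $(x_l,x_m)\mid x_{m_1}$, whereas the four lcm identities \eqref{eq2.13}--\eqref{eq2.16} all rest on the consequence $\gamma\ge\alpha$ obtained in the $\beta_2=\phi$ type. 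The difficulty of the argument is thus concentrated in the verification that $A_S(z,x_m)$ satisfies $\mathcal G$ so that Lemma \ref{Lemma 2.10} applies; once $[z,x_{m_2}]=x_m$ is secured, the rest is a finite case check on exponents.
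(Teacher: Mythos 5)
Your overall architecture is sound, and most of it checks out: reducing \eqref{eq2.12}--\eqref{eq2.16} to prime-by-prime $\min/\max$ relations, splitting primes into four local types via $[x_{m_i},x_{m_j}]=x_m$ (so that at most one of $\beta_1,\beta_2,\beta_3$ drops below $\alpha$ at any prime), extracting $\gamma\le\beta_1$ in the type $\beta_1=\phi$ from $(x_l,x_m)\mid x_{m_1}$, and noting that the only remaining input is $\gamma\ge\alpha$ in the type $\beta_2=\phi$ — I verified the concluding case check and it closes. (For reference, the paper gives no proof of Lemma~\ref{Lemma 2.16}; it is quoted from the proof of Theorem 1.3 of \cite{[ZCH-JCTA2022]}, and your use of Lemma~\ref{Lemma 2.10} with $z=(x_l,x_m)$ parallels what the paper does inside Lemmas~\ref{Lemma 2.17} and \ref{Lemma 2.18}.) The proof therefore stands or falls with the step you yourself call the main obstacle: showing that $A_S(z,x_m)$ satisfies the condition $\mathcal G$ so that Lemma~\ref{Lemma 2.10} applies.

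That step is a genuine gap: condition $\mathcal G$ is \emph{not} inherited by sets of the form $A_S(z,x)$. Since $z$ itself is excluded from $A_S(z,x)$, the condition fails as soon as some $u\in A_S(z,x)$ has two greatest-type divisors $y_1,y_2$ lying in $A_S(z,x)$ with $(y_1,y_2)=z$, for then $(y_1,y_2)\notin G_{A_S(z,x)}(y_1)$. A toy instance is $S=\{1,z,zp,zq,zpq\}$ with $p,q$ distinct primes coprime to $z$: $S$ satisfies $\mathcal G$, yet $A_S(z,zpq)=\{zp,zq,zpq\}$ does not. Worse, this failure can occur under the exact hypotheses of Lemma~\ref{Lemma 2.16}: take distinct primes $p_1,p_2,p_3,p_4,q_1,q_2,r$ and let $S$ consist of all divisors of $p_1p_2p_3p_4$ together with $x_m=p_1p_2p_3p_4q_1q_2$, $x_{m_2}=x_m/p_1$, $x_{m_3}=x_m/p_2$, $w=p_3p_4q_1q_2$ and $x_l=p_1p_2r$. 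One checks that $S$ is gcd closed and satisfies $\mathcal G$, that $G_S(x_m)=\{x_{m_1},x_{m_2},x_{m_3}\}$ with $x_{m_1}=p_1p_2p_3p_4$, and that $z:=(x_l,x_m)=p_1p_2$ obeys $z\mid x_{m_1}$, $z\ne x_{m_1}$, $z\nmid x_{m_2}$, $z\nmid x_{m_3}$. Here $A_S(z,x_m)=\{p_1p_2p_3,\ p_1p_2p_4,\ x_{m_1},\ x_m\}$, and the two greatest-type divisors $p_1p_2p_3$, $p_1p_2p_4$ of $x_{m_1}$ in this set have gcd exactly $z$, so $A_S(z,x_m)$ violates $\mathcal G$ and Lemma~\ref{Lemma 2.10} cannot be invoked — even though all five conclusions of Lemma~\ref{Lemma 2.16} do hold in this example. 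What rescues the intended application is the global bound $\max_{x\in S}\{|G_S(x)|\}=3$, in force in Theorems~\ref{Theorem 1.2} and \ref{Theorem 1.3} but neither assumed in the lemma nor used anywhere in your argument (in my example $|G_S(x_{m_1})|=4$). Under that bound, $x_{m_{12}}$ and $x_{m_{13}}$ are two \emph{distinct} greatest-type divisors of $x_{m_1}$ not divisible by $z$ (distinct, since otherwise both would equal $x_{m_4}$, contradicting the claim proved in Lemma~\ref{Lemma 2.17}), so at most one greatest-type divisor of $x_{m_1}$ is a multiple of $z$; iterating this down the tower shows $A_S(z,x_m)$ is a divisor chain, which satisfies $\mathcal G$ vacuously. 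Some such argument must be supplied: ``it inherits $\mathcal G$ from $S$'' is not a proof, and without the bound it is false.
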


\section{Proof of Theorem \ref{Theorem 1.2}}

In the present section, we use the lemmas presented in Section 2
to show Theorem \ref{Theorem 1.2}. For arbitrary integers $l$ and $m$
with $1\le l, m\le n$, the functions $f(l,m)$ and $g(l,m)$ are defined by
\begin{align}
f(l,m):=\frac{1}{\alpha_{\xi_a}(x_m)}\sum\limits_{x_r|x_m}c_{rm}(x_l,x_r)^b\label{eq3.1}
\end{align}
and
\begin{align}
g(l,m):=\frac{1}{\alpha_{\xi_a}(x_m)}\sum\limits_{x_r|x_m}c_{rm}[x_l,x_r]^b,\label{eq3.2}
\end{align}
respectively.

\begin{lemma}\label{Lemma 2.17}
Let $S$ be a gcd-closed set satisfying the condition
$\mathcal G$ and let $x_l, x_m\in S$ with $|G_S(x_m)|=3$. Let
$a$ and $b$ be positive integers with $a|b$.
Then $f(l,m)\in \mathbb Z$ and $g(l,m)\in \mathbb Z$.
\end{lemma}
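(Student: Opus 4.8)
The plan is to prove that the two quantities $f(l,m)$ and $g(l,m)$ are integers by exploiting Lemma \ref{Lemma 2.7}, which tells us exactly which coefficients $c_{rm}$ are nonzero. Since $G_S(x_m)=\{x_{m_1},x_{m_2},x_{m_3}\}$ and $S$ satisfies the condition $\mathcal{G}$, Lemma \ref{Lemma 2.7} collapses the sums in \eqref{eq3.1} and \eqref{eq3.2} to eight terms indexed by $r\in\{m, m_1, m_2, m_3, m_{12}, m_{13}, m_{23}, m_4\}$, with sign pattern $+1$ at $m$ and the three $m_{ij}$, and $-1$ at the four $m_i$. Hence
\begin{align*}
\alpha_{\xi_a}(x_m)f(l,m)=&(x_l,x_m)^b-(x_l,x_{m_1})^b-(x_l,x_{m_2})^b-(x_l,x_{m_3})^b\\
&+(x_l,x_{m_{12}})^b+(x_l,x_{m_{13}})^b+(x_l,x_{m_{23}})^b-(x_l,x_{m_4})^b,
\end{align*}
and by Lemma \ref{Lemma 2.5} the denominator $\alpha_{\xi_a}(x_m)$ is precisely the analogous alternating sum of the $(x_m,y_{k,i_1},\cdots)^a$, which under condition $\mathcal{G}$ simplifies to the expression computed in Lemma \ref{Lemma 2.14}(i). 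The task thus reduces to showing that this $a$-power alternating sum divides the corresponding $b$-power alternating sum, now with each power base replaced by a gcd involving the auxiliary element $x_l$.

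First I would set $z:=(x_l,x_m)$ and split into cases according to the divisibility relations between $z$ and the greatest-type divisors $x_{m_1},x_{m_2},x_{m_3}$; by the symmetry of the condition $\mathcal{G}$ configuration, these reduce to a handful of representative cases. The cleanest case is when $z=x_m$ or $z\in\{x_{m_{ij}}\}$ or $z=x_{m_4}$, i.e.\ when $z$ lies in the sublattice, since then all the $(x_l,\cdot)$ gcds collapse and the numerator alternating sum either equals the $b$-power version of the denominator (giving Lemma \ref{Lemma 2.14}(i) directly) or vanishes termwise. The substantive cases are those in which $z$ divides exactly one or exactly two of the $x_{m_i}$: here the coincidences \eqref{eq2.12}--\eqref{eq2.16} of Lemma \ref{Lemma 2.16} (and their analogues obtained by permuting the indices) are the essential tool, since they identify pairs among the eight gcd terms $(x_l,x_{m_\bullet})$ that become equal, allowing the eight-term numerator to telescope into a product of factors of the shape $((x_l,A)^b-(x_l,B)^b)$ where $A|B$ with $B/A$ matching a ratio appearing in the denominator factorization.

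The key reduction is to recognize that after this telescoping the numerator factors as a product of differences of $b$-th powers whose denominators are the corresponding differences of $a$-th powers, so that each factor is divisible by its counterpart via the elementary fact that $X^a-Y^a \mid X^b-Y^b$ when $a|b$ (equivalently via Lemma \ref{Lemma 2.12}(i) applied to the single-greatest-type-divisor configurations induced on the sublattice). Concretely, I expect the denominator factorization from Lemma \ref{Lemma 2.14}(i),
$$
\Big(\big(\tfrac{x_m}{x_{m_2}}\big)^a-1\Big)\Big(\big(\tfrac{x_m}{x_{m_1}}\big)^a-1\Big)\Big(\big(\tfrac{x_{m_1}x_{m_2}}{x_m}\big)^a-x_{m_4}^a\Big),
$$
to match, ratio-by-ratio, a parallel factorization of the numerator in each case, so that the quotient $f(l,m)$ is a product of three integer ratios of the form $(U^b-V^b)/(U^a-V^a)$.

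The main obstacle will be the bookkeeping in the mixed cases where $z$ divides exactly two of the $x_{m_i}$: there one must verify that the five gcd identities of Lemma \ref{Lemma 2.16}, together with the lattice relations \eqref{eq2.8}--\eqref{eq2.11}, really do force the numerator to factor in alignment with the denominator rather than leaving an uncancelled term. For $g(l,m)$ the strategy is identical but uses the lcm identities \eqref{eq2.13}--\eqref{eq2.16} of Lemma \ref{Lemma 2.16} in place of the gcd identity \eqref{eq2.12}; I expect the lcm case to be slightly more delicate because lcms grow rather than shrink, so the analogue of Lemma \ref{Lemma 2.14}(i) must be applied with the roles of the interior and exterior elements exchanged, and I would lean on Lemma \ref{Lemma 2.13}(i) to handle the induced two-greatest-type-divisor sub-configurations cleanly.
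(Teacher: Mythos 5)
Your skeleton is the paper's: reduce to the eight-term sums \eqref{eq3.6} and \eqref{eq3.7} via Lemma \ref{Lemma 2.7}, express $\alpha_{\xi_a}(x_m)$ via Lemma \ref{Lemma 2.5}, and split into cases according to how $(x_l,x_m)$ sits relative to $x_{m_1},x_{m_2},x_{m_3}$. But the central mechanism you propose for the ``substantive'' cases is not what happens, and trying to execute it literally would stall. When $(x_l,x_m)$ divides exactly one or exactly two of the $x_{m_i}$, there is no ratio-by-ratio matching between a factorization of the numerator and the three-factor factorization of $\alpha_{\xi_a}(x_m)$, and no divisibility fact of the form $X^a-Y^a\mid X^b-Y^b$ is invoked at all: the coincidences among the gcds $(x_l,x_{m_\bullet})$ and among the lcms $[x_l,x_{m_\bullet}]$ pair the eight terms into equal terms of opposite sign, so both sums vanish identically and $f(l,m)=g(l,m)=0$. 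The only places where Lemma \ref{Lemma 2.14} (your ``product of three integer ratios'') is actually needed are the two extreme cases: $f(l,m)$ when $x_m\mid x_l$, where the numerator is the genuine $b$-power alternating sum, and $g(l,m)$ when $(x_l,x_m)\mid x_{m_4}$, where the lcm sum equals $x_l^b/(x_l,x_m)^b$ times that alternating sum. So your picture of a uniform matched factorization is realized only in these degenerate-free cases; elsewhere the correct statement is simply cancellation, which your own pairing idea would reveal upon computation.

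Two further tool-level gaps. First, your citation of Lemma \ref{Lemma 2.16} ``and its permuted analogues'' cannot cover the case where $(x_l,x_m)$ divides exactly two of the $x_{m_i}$: that lemma's hypotheses are precisely the exactly-one configuration, and no permutation of indices converts it into the exactly-two one. For the exactly-two case the needed lcm identities (e.g.\ $[x_l,x_{m_1}]=[x_l,x_{m_{13}}]$, $[x_l,x_m]=[x_l,x_{m_3}]$) come from Lemma \ref{Lemma 2.10}, applied to the set $A_S((x_l,x_m),x_{m_1})$, which you never invoke; likewise your appeal to Lemma \ref{Lemma 2.13}(i) is misplaced, since that lemma concerns $|G_S(x_m)|=2$ and enters only in the other cases of the proof of Theorem \ref{Theorem 1.2}, not in this lemma. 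Second, writing \eqref{eq3.1} and \eqref{eq3.2} as eight distinct terms with the stated sign pattern presupposes that $x_m$, the $x_{m_i}$, the $x_{m_{ij}}$ and $x_{m_4}$ are pairwise distinct; the paper first proves the claim $x_{m_4}\ne x_{m_{ij}}$ (using $[x_{m_i},x_{m_j}]=x_m$ and the lcm formula for three integers), and your sketch omits this verification. With the factor-matching step replaced by the cancellation argument, Lemma \ref{Lemma 2.10} added to your toolkit, and the distinctness claim supplied, your outline becomes the paper's proof.
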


\begin{proof}
Let
$G_S(x_m)=\{x_{m_1},x_{m_2},x_{m_3}\},
x_{m_{ij}}=(x_{m_i},x_{m_j})\ \hbox{for}\ 1\le i<j\le 3,
x_{m_4}=(x_{m_1},x_{m_2},x_{m_3}).$
By Lemma \ref{Lemma 2.5}, one has
\begin{align}\label{eq3.4}
\alpha_{\xi_a}(x_m)=x_m^a-x_{m_1}^a-x_{m_2}^a-x_{m_3}^a
+x_{m_{12}}^a+x_{m_{13}}^a+x_{m_{23}}^a-x_{m_4}^a.
\end{align}
Since $x_m$ satisfies the condition $\mathcal G$,
then for any integers $i$ and $j$
with $1\le i<j\le 3$, we have $[x_{m_i},x_{m_j}]=x_m$.
Now we claim that
\begin{align*}
x_{m_4}\ne x_{m_{ij}}\ \hbox{for}\ 1\le i<j\le 3.
\end{align*}
Assume that $x_{m_4}=x_{m_{ij}}$. Without loss of generality,
we may let $x_{m_{12}}=x_{m_4}$. It follows that
\begin{align*}
x_m=&[x_{m_1},x_{m_2},x_{m_3}]\\
=&\frac{x_{m_1}x_{m_2}x_{m_3}(x_{m_1},x_{m_2},x_{m_3})}
{(x_{m_1},x_{m_2})(x_{m_1},x_{m_3})(x_{m_2},x_{m_3})}\\
=&\frac{x_{m_1}x_{m_2}x_{m_3}x_{m_4}}{x_{m_{12}}x_{m_{13}}x_{m_{23}}}
=\frac{x_{m_1}x_{m_2}x_{m_3}}{x_{m_{13}}x_{m_{23}}}\\
=&\frac{x_{m_2}}{x_{m_{23}}}[x_{m_1},x_{m_3}]
=\frac{x_{m_2}x_m}{x_{m_{23}}}.
\end{align*}
Thus $x_{m_2}=x_{m_{23}}$. This yields $x_{m_2}|x_{m_3}$.
It contradicts with the facts that
$G_S(x_m)=\{x_{m_1},x_{m_2},x_{m_3}\}$ and $x_{m_2}\ne x_{m_3}$.
The claim is proved. From the claim and Lemma \ref{Lemma 2.7}, we have
\begin{align}
\sum\limits_{x_r|x_m}c_{rm}(x_l,x_r)^b=&(x_l,x_m)^b
-(x_l,x_{m_1})^b-(x_l,x_{m_2})^b-(x_l,x_{m_3})^b\notag\\
&+(x_l,x_{m_{12}})^b+(x_l,x_{m_{13}})^b+(x_l,x_{m_{23}})^b
-(x_l,x_{m_4})^b,\label{eq3.6}\\
\sum\limits_{x_r|x_m}c_{rm}[x_l,x_r]^b=&[x_l,x_m]^b-[x_l,
x_{m_1}]^b-[x_l,x_{m_2}]^b-[x_l,x_{m_3}]^b\notag\\
&+[x_l,x_{m_{12}}]^b+[x_l,x_{m_{13}}]^b+[x_l,x_{m_{23}}]^b
-[x_l,x_{m_4}]^b.\label{eq3.7}
\end{align}
We only need to consider the following four cases
corresponding to the four divisibility diagrams in Fig. 1.

$$
\hbox{{\bf Fig. 1.} Divisibility relations among}\
x_m,x_l,x_{m_i}(1\le i\le 4)\ \hbox{and}\ x_{m_{ij}}
(1\le i,j\le 3).
$$

$$\small{\xymatrix{
  &x_{m_1} \ar[r] \ar[dr] & x_{m_{12}}\ar[dr]&(x_l,x_m)\\
  x_{m}  \ar[dr] \ar[r] \ar[ur]& x_{m_2} \ar[ur] \ar[dr]&x_{m_{13}}\ar[r]&
  \ar[u]x_{m_4};\\
  &x_{m_3}\ar[r]\ar[ur]&x_{m_{23}}\ar[ur]& \\
  & \text{\rm \ \ \ \  Case \ 1}
  }
\xymatrix{
  &x_{m_1} \ar[r]\ar[dr] & x_{m_{12}}\ar[dr]\ar[r]&(x_l,x_m) \\
  x_{m}  \ar[dr] \ar[r] \ar[ur]& x_{m_2} \ar[ur] \ar[dr]&x_{m_{13}}\ar[r]&x_{m_4},\\
  &x_{m_3}\ar[r]\ar[ur]&x_{m_{23}}\ar[ur]&\\
  & \text{\rm \ \ \ \  Case \ 2}
   }}
$$
$$\small{\xymatrix{
  (x_l,x_m)&x_{m_1}\ar[l] \ar[r]\ar[dr] & x_{m_{12}}\ar[dr]&\\
  x_{m}  \ar[dr] \ar[r] \ar[ur]& x_{m_2} \ar[ur] \ar[dr]&x_{m_{13}}\ar[r]&
  x_{m_4},\\
  &x_{m_3}\ar[r]\ar[ur]&x_{m_{23}}\ar[ur]& \\
  & \text{\rm \ \ \ \  Case \ 3}
  }
\xymatrix{
 x_l\ar[d] &x_{m_1} \ar[r]\ar[dr] & x_{m_{12}}\ar[dr]& \\
  x_{m}  \ar[dr] \ar[r] \ar[ur]& x_{m_2} \ar[ur] \ar[dr]&x_{m_{13}}\ar[r]&x_{m_4},\\
  &x_{m_3}\ar[r]\ar[ur]&x_{m_{23}}\ar[ur]&\\
  & \text{\rm \ \ \ \  Case \ 4}
   }}
$$

{\sc Case 1}. $(x_l, x_m)|x_{m_4}$. In this case,
for any $i$ and $j$ with $1\le i<j\le 3$, we have
$$
(x_l, x_m)|(x_l, x_{m_4})|(x_l, x_{m_{ij}})|(x_l, x_{m_i})|(x_l, x_m).
$$
So
\begin{align}\label{eq3.8}
(x_l, x_m)=(x_l, x_{m_4})=(x_l, x_{m_{ij}})=(x_l, x_{m_i}).
\end{align}
The identities (\ref{eq3.6}) and (\ref{eq3.8}) yield that $\sum\limits_{x_r|x_m}c_{rm}(x_l,x_r)^b=0$.
Hence by (\ref{eq3.1}), we have $f(l,m)=0$.

On the other hand, from \eqref{eq3.7} and (\ref{eq3.8}), one has
\begin{align}
&\sum\limits_{x_r|x_m}c_{rm}[x_l,x_r]^b\notag\\
=&\frac{x_l^bx_m^b}{(x_l,x_m)^b}-\frac{x_l^bx_{m_1}^b}{(x_l,x_{m_1})^b}
-\frac{x_l^bx_{m_2}^b}{(x_l,x_{m_2})^b}-\frac{x_l^bx_{m_3}^b}{(x_l,x_{m_3})^b}\notag\\
&+\frac{x_l^bx_{m_{12}}^b}{(x_l,x_{m_{12}})^b}+\frac{x_l^bx_{m_{13}}^b}{(x_l,x_{m_{13}})^b}
+\frac{x_l^bx_{m_{23}}^b}{(x_l,x_{m_{23}})^b}-\frac{x_l^bx_{m_4}^b}{(x_l,x_{m_4})^b}\notag\\
=&\frac{x_l^b}{(x_l,x_m)^b}(x_m^b-x_{m_1}^b-x_{m_2}^b-x_{m_3}^b+x_{m_{12}}^b+x_{m_{13}}^b+x_{m_{23}}^b-x_{m_4}^b).\label{eq3.9}
\end{align}
It follows from (\ref{eq3.2}), \eqref{eq3.4}, \eqref{eq3.7}, (\ref{eq3.9}) and Lemma \ref{Lemma 2.14} (i) that
$$
g(l,m)=\frac{x_l^b}{(x_l,x_m)^b}
\frac{x_m^b-x_{m_1}^b-x_{m_2}^b-x_{m_3}^b+x_{m_{12}}^b
+x_{m_{13}}^b+x_{m_{23}}^b-x_{m_4}^b}
{x_m^a-x_{m_1}^a-x_{m_2}^a-x_{m_3}^a+x_{m_{12}}^a
+x_{m_{13}}^a+x_{m_{23}}^a-x_{m_4}^a}\in \mathbb Z
$$
as required. Case 1 is proved.

{\sc Case 2}. $(x_l, x_m)|x_{m_{12}}$,
$(x_l, x_m)\nmid x_{m_{13}}$ and $(x_l, x_m)\nmid x_{m_{23}}$.
Clearly, for any $i\in\{1,2\}$, we have
$(x_l, x_m)|(x_l, x_{m_{12}})|(x_l, x_{m_i})|(x_l, x_m)$.
Thus
\begin{align}\label{eq 3.10}
(x_l, x_m)=(x_l, x_{m_{12}})=(x_l, x_{m_i}) (i\in\{1,2\}).
\end{align}
Meanwhile, we have $(x_l, x_m)\nmid x_{m_3}$
and $(x_l, x_m)\nmid x_{m_4}$.
Since $A_S\big((x_l, x_m),x_{m_1}\big)(\subseteq S)$
satisfies the condition $\mathcal G$,
$x_{m_{13}}\in G_S(x_{m_1})$ and $(x_l, x_m)
\nmid x_{m_{13}}$, by Lemma \ref{Lemma 2.10} one has
$[(x_l, x_m), x_{m_{13}}]=x_{m_1}$. So
\begin{align}\label{eq 3.11}
[x_l, x_{m_1}]=[x_l, [(x_l, x_m), x_{m_{13}}]]=[x_l, x_{m_{13}}].
\end{align}
Similarly, we can prove that
\begin{align}
&[x_l, x_m]=[x_l, x_{m_3}],\label{eq 3.12}\\
&[x_l, x_{m_2}]=[x_l, x_{m_{23}}],\label{eq 3.13}\\
&[x_l, x_{m_{12}}]=[x_l, x_{m_4}].\label{eq 3.14}
\end{align}
By (\ref{eq3.6}) and \eqref{eq 3.10} to \eqref{eq 3.14}, we have
\begin{align}
&\sum\limits_{x_r|x_m}c_{rm}(x_l,x_r)^b\notag\\
=&(x_l,x_m)^b
-(x_l,x_{m_1})^b-(x_l,x_{m_2})^b-(x_l,x_{m_3})^b\notag\\
&+(x_l,x_{m_{12}})^b+(x_l,x_{m_{13}})^b+(x_l,x_{m_{23}})^b
-(x_l,x_{m_4})^b\notag\\
=&-(x_l,x_{m_3})^b+(x_l,x_{m_{13}})^b+(x_l,x_{m_{23}})^b-(x_l,x_{m_4})^b\notag\\
=&-\frac{x_l^bx_{m_3}^b}{[x_l,x_{m_3}]^b}+\frac{x_l^bx_{m_{13}}^b}{[x_l,x_{m_{13}}]^b}
+\frac{x_l^bx_{m_{23}}^b}{[x_l,x_{m_{23}}]^b}-\frac{x_l^bx_{m_4}^b}{[x_l,x_{m_4}]^b}\notag\\
=&-\frac{x_l^bx_{m_3}^b}{[x_l,x_m]^b}+\frac{x_l^bx_{m_{13}}^b}{[x_l,x_{m_1}]^b}
+\frac{x_l^bx_{m_{23}}^b}{[x_l,x_{m_2}]^b}-\frac{x_l^bx_{m_4}^b}{[x_l,x_{m_{12}}]^b}\notag\\
=&-\frac{x_l^bx_{m_3}^b(x_l,x_m)^b}{x_l^bx_m^b}+\frac{x_l^bx_{m_{13}}^b(x_l,x_{m_1})^b}{x_l^bx_{m_1}^b}
+\frac{x_l^bx_{m_{23}}^b(x_l,x_{m_2})^b}{x_l^bx_{m_2}^b}-\frac{x_l^bx_{m_4}^b(x_l,x_{m_{12}})^b}{x_l^bx_{m_{12}}^b}\notag\\
=&(x_l,x_m)^b\Big(-\big(\frac{x_{m_3}}{x_m}\big)^b+\big(\frac{x_{m_{13}}}{x_{m_1}}\big)^b
+\big(\frac{x_{m_{23}}}{x_{m_2}}\big)^b-\big(\frac{x_{m_4}}{x_{m_{12}}}\big)^b\Big).\label{eq 3.15}
\end{align}
Notice that both of $x_m$ and $x_{m_2}$ satisfy the condition $\mathcal G$,
$\{x_{m_1},x_{m_3}\}\subseteq G_S(x_m)$ and $\{x_{m_{12}},x_{m_{23}}\}\subseteq G_S(x_{m_2})$.
Then we have $[x_{m_1},x_{m_3}]=x_m$ and $[x_{m_{12}},x_{m_{23}}]=x_{m_2}$. So
\begin{align}\label{eq 3.16}
\frac{x_{m_3}}{x_m}=\frac{x_{m_{13}}}{x_{m_1}}
\end{align}
and
\begin{align}\label{eq 3.17}
\frac{x_{m_{23}}}{x_{m_2}}=\frac{x_{m_4}}{x_{m_{12}}}.
\end{align}
The identities (\ref{eq 3.15}) to (\ref{eq 3.17}) tell us that
\begin{align}
\sum\limits_{x_r|x_m}c_{rm}(x_l,x_r)^b=0.\label{eq 3.18}
\end{align}
It follows from (\ref{eq3.1}) and (\ref{eq 3.18}) that
$f(l,m)=0$.

On the other hand, it follows from (\ref{eq3.7}), and \eqref{eq 3.11} to \eqref{eq 3.14} that
\begin{align}
\sum\limits_{x_r|x_m}c_{rm}[x_l,x_r]^b=&[x_l,x_m]^b-[x_l,
x_{m_1}]^b-[x_l,x_{m_2}]^b-[x_l,x_{m_3}]^b\notag\\
&+[x_l,x_{m_{12}}]^b+[x_l,x_{m_{13}}]^b+[x_l,x_{m_{23}}]^b
-[x_l,x_{m_4}]^b\notag\\
=&0.\label{eq 3.19}
\end{align}
So by (\ref{eq3.2}) and (\ref{eq 3.19}), we have $g(l,m)=0$.
Case 2 is proved.

{\sc Case 3}. $(x_l, x_m)|x_{m_1}$, $(x_l, x_m)\nmid x_{m_2}$ and $(x_l, x_m)\nmid x_{m_3}$.
Since both of $x_{m_1}$ and $x_{m_3}$ satisfy the condition $\mathcal G$,
$\{x_{m_{12}}, x_{m_{13}}\}\subseteq G_S(x_{m_1})$ and
$\{x_{m_{13}}, x_{m_{23}}\}\subseteq G_S(x_{m_3})$,
we have $[x_{m_{12}}, x_{m_{13}}]=x_{m_1}$ and
$[x_{m_{13}}, x_{m_{23}}]=x_{m_3}$. So
\begin{align}
\frac{x_{m_1}}{x_{m_{12}}}=\frac{x_{m_{13}}}{x_{m_4}}
=\frac{x_{m_3}}{x_{m_{23}}}.\label{eq 3.20}
\end{align}
Consider the following two subcases:

{\sc Subcases 3-1}. $(x_l, x_m)\ne x_{m_1}$. Obviously,
$(x_l, x_m)|(x_l, x_{m_1})|(x_l, x_m)$.
Thus
\begin{align}
(x_l, x_m)=(x_l, x_{m_1}).\label{eq 3.21}
\end{align}
Moreover, it follows from \eqref{eq2.13} and \eqref{eq2.14} that
\begin{align}\label{eq 3.22}
(x_l, x_{m_2})=\dfrac{x_lx_{m_2}}{[x_l, x_{m_2}]}=\frac{x_lx_{m_2}}{[x_l, x_m]}
=\dfrac{x_lx_{m_2}(x_l, x_m)}{x_lx_m}=\frac{x_{m_2}(x_l, x_m)}{x_m}
\end{align}
and
\begin{align}\label{eq 3.23}
(x_l, x_{m_{12}})=\frac{x_lx_{m_{12}}}{[x_l, x_{m_{12}}]}=\frac{x_lx_{m_{12}}}{[x_l, x_{m_1}]}
=\frac{x_lx_{m_{12}}(x_l, x_{m_1})}{x_lx_{m_1}}=\frac{x_{m_{12}}(x_l, x_{m_1})}{x_{m_1}}.
\end{align}
Note that $\dfrac{x_{m_2}}{x_m}=\dfrac{x_{m_{12}}}{x_{m_1}}$.  By (\ref{eq 3.21}) to \eqref{eq 3.23}, one has
\begin{align}\label{eq 3.24}
(x_l, x_{m_2})=(x_l, x_{m_{12}}).
\end{align}
On the other hand, from (\ref{eq 3.21}), we have
\begin{small}
\begin{align}\label{eq 3.25}
(x_l, x_{m_4})=(x_l, x_{m_1}, x_{m_2}, x_{m_3})
=(x_l, x_m, x_{m_2}, x_{m_3})=(x_l, x_m, x_{m_{23}})
=(x_l, x_{m_{23}}).
\end{align}
\end{small}
It follows from (\ref{eq3.6}), \eqref{eq2.12}, \eqref{eq 3.21},
\eqref{eq 3.24} and (\ref{eq 3.25}) that
$\sum_{x_r|x_m}c_{rm}(x_l,x_r)^b=0$. Thus $f(l,m)=0$.
By (\ref{eq3.7}), and \eqref{eq2.13} to \eqref{eq2.16},
we have $\sum_{x_r|x_m}c_{rm}[x_l,x_r]^b=0$
and so $g(l,m)=0$.

{\sc Subcases 3-2}. $(x_l, x_m)=x_{m_1}$. Then
$x_{m_1}|x_l$ and so $x_{m_{12}}|x_l$,
$x_{m_{13}}|x_l$ and $x_{m_4}|x_l$. It is clear that
\begin{align}
&(x_l,x_{m_1})=x_{m_1},\label{eq 3.26}\\
&(x_l,x_{m_{12}})=x_{m_{12}},\label{eq 3.27}\\
&(x_l,x_{m_{13}})=x_{m_{13}},\label{eq 3.28}\\
&(x_l,x_{m_4})=x_{m_4}\label{eq 3.29}
\end{align}
and
\begin{align}\label{eq 3.30}
[x_l,x_{m_1}]=[x_l,x_{m_{12}}]=[x_l,x_{m_{13}}]
=[x_l,x_{m_4}]=x_l.
\end{align}
Moreover, we can obtain that
\begin{align}
&(x_l, x_{m_3})=(x_l, x_m, x_{m_3})=(x_{m_1}, x_{m_3})=x_{m_{13}},\label{eq 3.31}\\
&(x_l, x_{m_{23}})=(x_l, x_m, x_{m_{23}})=(x_{m_1}, x_{m_{23}})=x_{m_4},\label{eq 3.32}\\
&(x_l,x_{m_2})=(x_l, x_m, x_{m_2})=(x_{m_1}, x_{m_2})=x_{m_{12}}.\label{eq 3.33}
\end{align}
So from \eqref{eq 3.26} to \eqref{eq 3.29} and from \eqref{eq 3.31} to \eqref{eq 3.33}, we obtain that
$$(x_l, x_m)=(x_l,x_{m_1}),$$
$$(x_l,x_{m_{12}})=(x_l,x_{m_2}),$$
$$(x_l,x_{m_{13}})=(x_l, x_{m_3}),$$
and
$$(x_l,x_{m_4})=(x_l, x_{m_{23}}).$$
These together with (\ref{eq3.6}) imply that
$$
\sum\limits_{x_r|x_m}c_{rm}(x_l,x_r)^b=0
$$
and so $f(l,m)=0$.

Since $x_m$ satisfies the condition $\mathcal G$ and
$G_S(x_m)=\{x_{m_1}, x_{m_2}, x_{m_3}\}$.
one has $[x_{m_1}, x_{m_2}]=x_m$. This together with
$x_{m_1}|x_l$ yields that
\begin{align}\label{eq 3.34}
[x_l,x_m]=[x_l,[x_{m_1}, x_{m_2}]]=[x_l,x_{m_2}].
\end{align}
Meanwhile, by \eqref{eq 3.20}, \eqref{eq 3.31} and \eqref{eq 3.32} we have
\begin{small}
\begin{align}\label{eq 3.35}
[x_l, x_{m_3}]=\frac{x_l x_{m_3}}{(x_l, x_{m_3})}
=\frac{x_l x_{m_3}}{x_{m_{13}}}
=\frac{x_l x_{m_{23}}}{x_{m_4}}=\frac{x_l x_{m_{23}}}{(x_l, x_{m_{23}})}=[x_l, x_{m_{23}}].
\end{align}
\end{small}
It then follows from (\ref{eq3.7}), (\ref{eq 3.30}), (\ref{eq 3.34})
and (\ref{eq 3.35}) that
$$\sum\limits_{x_r|x_m}c_{rm}[x_l,x_r]^b=0.$$
This yields $g(l,m)=0$ as desired. Case 3 is proved.

{\sc Case 4}. $x_m|x_l$. It is easy to check that
$(x_l, x_m)=x_m$, and for any integers $r,i$ and $j$
with $1\le r\le 4$ and $1\le i<j\le 3$, one has
$(x_l, x_{m_r})=x_{m_r}$, $(x_l, x_{m_{ij}})=x_{m_{ij}}$
and $[x_l, x_m]=[x_l, x_{m_r}]=[x_l, x_{m_{ij}}]=x_l.$
So by (\ref{eq3.6}) and (\ref{eq3.7}), one derives that
\begin{align}\label{eq 3.36}
\sum\limits_{x_r|x_m}c_{rm}(x_l,x_r)^b=
x_m^b-x_{m_1}^b-x_{m_2}^b-x_{m_3}^b+x_{m_{12}}^b
+x_{m_{13}}^b+x_{m_{23}}^b-x_{m_4}^b
\end{align}
and
\begin{align}\label{eq 3.37}
\sum\limits_{x_r|x_m}c_{rm}[x_l,x_r]^b=0.
\end{align}
Thus from Lemma \ref{Lemma 2.14} (i), (\ref{eq3.1}), \eqref{eq3.4} and (\ref{eq 3.36}),
it deduces that
$$
f(l,m)=\frac{x_m^b-x_{m_1}^b-x_{m_2}^b-x_{m_3}^b
+x_{m_{12}}^b+x_{m_{13}}^b+x_{m_{23}}^b-x_{m_4}^b}
{x_m^a-x_{m_1}^a-x_{m_2}^a-x_{m_3}^a+x_{m_{12}}^a
+x_{m_{13}}^a+x_{m_{23}}^a-x_{m_4}^a}\in\mathbb Z.
$$
By (\ref{eq3.2}) and (\ref{eq 3.37}), we have
$g(l,m)=0$. Case 4 is proved and so is Lemma \ref{Lemma 2.17}.
\end{proof}

{\it Proof of Theorem \ref{Theorem 1.2}.}
Let $S$ be a gcd-closed set satisfying the condition
$\mathcal G$ and $\max_{x\in S}|G_S(x)|=3$.
For any integers $l$ and $j$ with $1\le l,j\le n$,
from Lemma \ref{Lemma 2.2}, we have
\begin{align*}
\left((S^b)(S^a)^{-1}\right)_{lj}=&\sum_{r=1}^n(x_l, x_r)^b\sum\limits_{x_r|x_m\atop x_j|x_m}\frac{c_{rm}c_{jm}}{\alpha_{\xi_a}(x_m)}\\
=&\sum\limits_{x_j|x_m}c_{jm}\sum\limits_{x_r|x_m}\frac{c_{rm}(x_l, x_r)^b}{\alpha_{\xi_a}(x_m)}\\
=&\sum\limits_{x_j|x_m}c_{jm}\frac{1}{\alpha_{\xi_a}(x_m)}\sum\limits_{x_r|x_m}c_{rm}(x_l, x_r)^b\\
=&\sum\limits_{x_j|x_m}c_{jm}f(l,m)
\end{align*}
and
\begin{align*}
\left([S^b](S^a)^{-1}\right)_{lj}
=&\sum_{r=1}^n[x_l, x_r]^b\sum\limits_{x_r|x_m\atop x_j|x_m}\frac{c_{rm}c_{jm}}{\alpha_{\xi_a}(x_m)}\\
=&\sum\limits_{x_j|x_m}c_{jm}\sum\limits_{x_r|x_m}
\frac{c_{rm}[x_l, x_r]^b}{\alpha_{\xi_a}(x_m)}\\
=&\sum\limits_{x_j|x_m}c_{jm}\frac{1}{\alpha_{\xi_a}(x_m)}
\sum\limits_{x_r|x_m}c_{rm}[x_l, x_r]^b\\
=&\sum\limits_{x_j|x_m}c_{jm}g(l,m),
\end{align*}
where $f(l,m)$ and $g(l,m)$ are defined as in
(\ref{eq3.1}) and  (\ref{eq3.2}), respectively.
By the definition of $c_{jm}$, we know that $c_{jm}\in\mathbb Z$.
So in order to show $\left((S^b)(S^a)^{-1}\right)_{lj}\in\mathbb Z$
and $\left([S^b](S^a)^{-1}\right)_{lj}\in\mathbb Z$,
we only need to prove that $f(l,m)\in\mathbb Z$ and $g(l,m)\in\mathbb Z$
for all $1\le l,m\le n$. Consider the following three cases:

{\sc Case 1}. $|G_S(x_m)|=1$. Let $G_S(x_m)=\{x_{m_0}\}$.
By Lemmas \ref{Lemma 2.5}, \ref{Lemma 2.6} and \ref{Lemma 2.12} (i),
we can arrive at that
$$
f(l, m)=\frac{(x_l, x_m)^b-(x_l, x_{m_0})^b}{x_m^a-x_{m_0}^a}\in\mathbb Z
$$
and
$$
g(l, m)=\frac{[x_l, x_m]^b-[x_l, x_{m_0}]^b}{x_m^a-x_{m_0}^a}\in\mathbb Z.
$$

{\sc Case 2}. $|G_S(x_m)|=2$. Let $G_S(x_m)=\{x_{m_{01}}, x_{m_{02}}\}$ and
$(x_{m_{01}}, x_{m_{02}})=x_{m_{03}}$.
It follows from Lemmas \ref{Lemma 2.5}, \ref{Lemma 2.6} and \ref{Lemma 2.13} (i) that
$$
f(l, m)=\frac{(x_l, x_m)^b-(x_l, x_{m_{01}})^b-(x_l, x_{m_{02}})^b+(x_l, x_{m_{03}})^b}
{x_m^a-x_{m_{01}}^a-x_{m_{02}}^a+x_{m_{03}}^a}\in\mathbb Z
$$
and
$$
g(l, m)=\frac{[x_l, x_m]^b-[x_l, x_{m_{01}}]^b-[x_l, x_{m_{02}}]^b+[x_l, x_{m_{03}}]^b}
{x_m^a-x_{m_{01}}^a-x_{m_{02}}^a+x_{m_{03}}^a}\in\mathbb Z.
$$

{\sc Case 3}. $|G_S(x_m)|=3$. Then from Lemma \ref{Lemma 2.17}, we know that
both of $f(l,m)$ and $g(l,m)$ are integers.

This completes the proof of Theorem \ref{Theorem 1.2}.
\qed

\section{Proof of Theorem \ref{Theorem 1.3}}

In this section, with help of the lemmas presented in Section 2,
we show Theorem \ref{Theorem 1.3}. For this purpose, for arbitrary
integers $l$, $m$ and $s$ with $1\le l, m, s\le n$ and $x_s|x_m$,
we define the function $h(l,m)$ as follows:
\begin{align}
h(l,m):=\frac{1}{x_s^a\alpha_{\frac{1}{\xi_a}}(x_m)}\sum\limits_{x_r|x_m}
\frac{c_{rm}[x_l,x_r]^b}{x_r^a}.\label{eq3.3}
\end{align}
We have the following result.

\begin{lemma}\label{Lemma 2.18}
Let $S$ be a gcd-closed set satisfying the condition
$\mathcal G$ and let $x_l,x_m\in S$ with $|G_S(x_m)|=3$.
Let $a$ and $b$ be positive integers with $a|b$.
Then $h(l,m)\in \mathbb Z$.
\end{lemma}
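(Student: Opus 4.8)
The plan is to mimic the structure of the proof of Lemma \ref{Lemma 2.17}, replacing the GCD-based quantities $(x_l,x_r)^b$ with the LCM-weighted quantities $[x_l,x_r]^b/x_r^a$ that appear in \eqref{eq3.3}, and to organize the argument around the very same four divisibility configurations (Cases~1--4 of Fig.~1) governing the relative position of $(x_l,x_m)$ with respect to $x_{m_4}$ and the pairwise gcd's $x_{m_{ij}}$. By Lemma \ref{Lemma 2.5} the denominator $\alpha_{\frac{1}{\xi_a}}(x_m)$ equals $x_m^{-a}-x_{m_1}^{-a}-x_{m_2}^{-a}-x_{m_3}^{-a}+x_{m_{12}}^{-a}+x_{m_{13}}^{-a}+x_{m_{23}}^{-a}-x_{m_4}^{-a}$, and by Lemma \ref{Lemma 2.7} the coefficients $c_{rm}$ are supported exactly on $r\in\{m,m_i,m_{ij}\}$ with the stated signs, so the numerator sum collapses to the eight-term alternating expression
\begin{align*}
\sum_{x_r|x_m}\frac{c_{rm}[x_l,x_r]^b}{x_r^a}
=&\frac{[x_l,x_m]^b}{x_m^a}-\frac{[x_l,x_{m_1}]^b}{x_{m_1}^a}-\frac{[x_l,x_{m_2}]^b}{x_{m_2}^a}-\frac{[x_l,x_{m_3}]^b}{x_{m_3}^a}\\
&+\frac{[x_l,x_{m_{12}}]^b}{x_{m_{12}}^a}+\frac{[x_l,x_{m_{13}}]^b}{x_{m_{13}}^a}+\frac{[x_l,x_{m_{23}}]^b}{x_{m_{23}}^a}-\frac{[x_l,x_{m_4}]^b}{x_{m_4}^a}.
\end{align*}

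First I would dispose of Cases~2 and~3 (i.e.\ $(x_l,x_m)\mid x_{m_{12}}$ with $(x_l,x_m)\nmid x_{m_{13}},x_{m_{23}}$, and $(x_l,x_m)\mid x_{m_1}$ with $(x_l,x_m)\nmid x_{m_2},x_{m_3}$), where I expect the numerator to vanish outright. In these configurations the LCM-collapse identities are already in hand: Case~2 supplies \eqref{eq 3.11}--\eqref{eq 3.14}, pairing $(m,m_3)$, $(m_1,m_{13})$, $(m_2,m_{23})$, $(m_{12},m_4)$, while Case~3 supplies \eqref{eq2.13}--\eqref{eq2.16} from Lemma \ref{Lemma 2.16}. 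Using these LCM-equalities together with the gcd-ratio identities (which force the corresponding pairs of indices to have equal ratios, e.g.\ $x_{m_3}/x_m=x_{m_{13}}/x_{m_1}$), the four matched terms cancel in pairs and I get $h(l,m)=0$. The subcase $(x_l,x_m)=x_{m_1}$ (Subcase~3-2) is handled separately just as in Lemma \ref{Lemma 2.17}, via the explicit values in \eqref{eq 3.30}, \eqref{eq 3.34}, \eqref{eq 3.35}.

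The substantive cases are Case~1 ($(x_l,x_m)\mid x_{m_4}$) and Case~4 ($x_m\mid x_l$), where $h(l,m)$ will be a genuine nonzero integer rather than zero. In Case~1 the identity \eqref{eq3.8} gives $[x_l,x_r]=x_lx_r/(x_l,x_m)$ uniformly for each relevant $r$, so the numerator factors as $\bigl(x_l^b/(x_l,x_m)^b\bigr)\cdot\bigl(x_m^{b-a}-x_{m_1}^{b-a}-\cdots-x_{m_4}^{b-a}\bigr)$ after pulling the common $x_r^b/x_r^a=x_r^{b-a}$ out of each term; in Case~4 one has $[x_l,x_r]=x_l$ for all $r$, so the numerator is $x_l^b\bigl(x_m^{-a}-x_{m_1}^{-a}-\cdots-x_{m_4}^{-a}\bigr)$-type sum with an extra $x_r^{-a}$, again reducing to the $(b-a)$-exponent alternating expression. \textbf{The crux} is then to divide this $(b-a)$-power numerator by $x_s^a\,\alpha_{\frac{1}{\xi_a}}(x_m)$ and obtain an integer; this is precisely what Lemma \ref{Lemma 2.14}(ii) delivers, since its displayed quotient is built from exactly the $(b-a)$-exponent numerator over $x_t^a$ times the $-a$-exponent denominator and was shown to lie in $\mathbb Z$. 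Thus the hardest step is purely bookkeeping: verifying that after the LCM-to-product substitution the numerator of $h(l,m)$ matches the numerator of Lemma \ref{Lemma 2.14}(ii) up to a manifestly integral factor $\bigl(x_l/(x_l,x_m)\bigr)^b$ (Case~1) or $(x_l/x_s)^a x_l^{b-a}$-type factor (Case~4), after which integrality is immediate from that lemma. I therefore expect no new number-theoretic obstacle beyond assembling these pieces in the correct order, which completes the proof of Lemma \ref{Lemma 2.18}.
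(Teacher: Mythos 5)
Your Cases 1 and 4 do match the paper: in Case 1, \eqref{eq3.8} reduces the numerator to $x_l^b/(x_l,x_m)^b$ times the $(b-a)$-exponent alternating sum, and Lemma \ref{Lemma 2.14}(ii) gives integrality; in Case 4 the numerator equals $x_l^b\,\alpha_{\frac{1}{\xi_a}}(x_m)$ exactly, so $h(l,m)=x_l^b/x_s^a\in\mathbb Z$ directly (no appeal to Lemma \ref{Lemma 2.14}(ii) is needed or even possible there, but that inaccuracy is harmless).

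The genuine gap is your treatment of Cases 2 and 3, where you claim the numerator ``vanishes outright'' by pairing terms through the LCM-equalities, as happened for $g(l,m)$ in Lemma \ref{Lemma 2.17}. That cancellation fails for $h(l,m)$ precisely because of the weights $1/x_r^a$ in \eqref{eq3.3}: paired indices carry different weights. For example, in Case 2 one has $[x_l,x_m]=[x_l,x_{m_3}]$ but
\begin{align*}
\frac{[x_l,x_m]^b}{x_m^a}-\frac{[x_l,x_{m_3}]^b}{x_{m_3}^a}
=[x_l,x_m]^b\Big(\frac{1}{x_m^a}-\frac{1}{x_{m_3}^a}\Big)\neq 0,
\end{align*}
since $x_{m_3}<x_m$. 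Carrying out all four pairings, with $\lambda:=x_m/x_{m_3}=x_{m_1}/x_{m_{13}}=x_{m_2}/x_{m_{23}}=x_{m_{12}}/x_{m_4}$ and $\delta:=x_m/x_{m_1}=x_{m_2}/x_{m_{12}}$, the numerator equals
\begin{align*}
\frac{x_l^b}{(x_l,x_m)^b}\,(1-\lambda^a)\,(\delta^{b-a}-1)\,
\big(x_{m_1}^{b-a}-x_{m_{12}}^{b-a}\big),
\end{align*}
which is nonzero whenever $b>a$; this is exactly \eqref{eq 3.43} in the paper. Likewise in Case 3 (both subcases) the paper arrives at $h(l,m)=x_{m_1}^a(x_m/x_s)^a\big(x_l/(x_l,x_m)\big)^b\,(x_m^{b-a}-x_{m_1}^{b-a})/(x_{m_1}^a-x_m^a)$, again nonzero in general. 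So in Cases 2 and 3 the quantity $h(l,m)$ is a genuine nonzero integer, and proving its integrality is where the real work of the lemma lies: one must factor $\alpha_{\frac{1}{\xi_a}}(x_m)$ compatibly with the numerator (as in \eqref{eq 3.44} and \eqref{eq3.53}) and invoke $a\mid b-a$ so that differences of $a$-th powers divide the corresponding differences of $(b-a)$-th powers. Your proposal, as written, establishes only Cases 1 and 4.
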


\begin{proof}
Let
\begin{align*}
&G_S(x_m)=\{x_{m_1},x_{m_2},x_{m_3}\},\\
&x_{m_{ij}}=(x_{m_i},x_{m_j})\ \hbox{for}\ 1\le i<j\le 3,\\
&x_{m_4}=(x_{m_1},x_{m_2},x_{m_3}).
\end{align*}
By Lemma \ref{Lemma 2.5}, one has
\begin{align}\label{eq 2.40}
\alpha_{\frac{1}{\xi_a}}(x_m)=x_m^{-a}-x_{m_1}^{-a}-x_{m_2}^{-a}
-x_{m_3}^{-a}+x_{m_{12}}^{-a}+x_{m_{13}}^{-a}
+x_{m_{23}}^{-a}-x_{m_4}^{-a}.
\end{align}

From Lemma \ref{Lemma 2.7}, we have
\begin{align}
\sum\limits_{x_r|x_m}\frac{c_{rm}[x_l,x_r]^b}{x_r^a}
=&\frac{[x_l,x_m]^b}{x_m^a}-\frac{[x_l,x_{m_1}]^b}{x_{m_1}^a}
-\frac{[x_l,x_{m_2}]^b}{x_{m_2}^a}-\frac{[x_l,x_{m_3}]^b}{x_{m_3}^a}\notag\\
&+\frac{[x_l,x_{m_{12}}]^b}{x_{m_{12}}^a}+\frac{[x_l,x_{m_{13}}]^b}{x_{m_{13}}^a}
+\frac{[x_l,x_{m_{23}}]^b}{x_{m_{23}}^a}-\frac{[x_l,x_{m_4}]^b}{x_{m_4}^a}.\label{eq3.38}
\end{align}
We only need to consider the following four cases as
the proof of Lemma \ref{Lemma 2.17}.

{\sc Case 1}. $(x_l, x_m)|x_{m_4}$.
From (\ref{eq3.8}) and (\ref{eq3.38}), one has
\begin{align}
&\sum\limits_{x_r|x_m}\frac{c_{rm}[x_l,x_r]^b}{x_r^a}\notag\\
=&\frac{x_l^bx_m^{b-a}}{(x_l,x_m)^b}-\frac{x_l^bx_{m_1}^{b-a}}{(x_l,x_{m_1})^b}
-\frac{x_l^bx_{m_2}^{b-a}}{(x_l,x_{m_2})^b}-\frac{x_l^bx_{m_3}^{b-a}}{(x_l,x_{m_3})^b}\notag\\
&+\frac{x_l^bx_{m_{12}}^{b-a}}{(x_l,x_{m_{12}})^b}+\frac{x_l^bx_{m_{13}}^{b-a}}{(x_l,x_{m_{13}})^b}
+\frac{x_l^bx_{m_{23}}^{b-a}}{(x_l,x_{m_{23}})^b}-\frac{x_l^bx_{m_4}^{b-a}}{(x_l,x_{m_4})^b}\notag\\
=&\frac{x_l^b}{(x_l,x_m)^b}(x_m^{b-a}-x_{m_1}^{b-a}-x_{m_2}^{b-a}-x_{m_3}^{b-a}+x_{m_{12}}^{b-a}
+x_{m_{13}}^{b-a}+x_{m_{23}}^{b-a}-x_{m_4}^{b-a}).\label{eq 2.43}
\end{align}
It follows from (\ref{eq3.3}), \eqref{eq 2.40}, (\ref{eq 2.43}) and Lemma \ref{Lemma 2.14} (ii) that
$$
h(l,m)=\frac{x_l^b}{(x_l,x_m)^b}
\frac{x_m^{b-a}-x_{m_1}^{b-a}-x_{m_2}^{b-a}-x_{m_3}^{b-a}+x_{m_{12}}^{b-a}
+x_{m_{13}}^{b-a}+x_{m_{23}}^{b-a}-x_{m_4}^{b-a}}
{x_s^a(x_m^{-a}-x_{m_1}^{-a}-x_{m_2}^{-a}-x_{m_3}^{-a}+x_{m_{12}}^{-a}+x_{m_{13}}^{-a}+x_{m_{23}}^{-a}-x_{m_4}^{-a})}\in \mathbb Z
$$
as expected.

{\sc Case 2}. $(x_l, x_m)|x_{m_{12}}$, $(x_l, x_m)\nmid x_{m_{13}}$ and $(x_l, x_m)\nmid x_{m_{23}}$.
By (\ref{eq3.38}) and \eqref{eq 3.10} to \eqref{eq 3.14}, we have
\begin{small}
\begin{align}
&\sum\limits_{x_r|x_m}\frac{c_{rm}[x_l,x_r]^b}{x_r^a}\notag\\
=&\frac{[x_l,x_m]^b}{x_m^a}-\frac{[x_l,x_{m_1}]^b}{x_{m_1}^a}
-\frac{[x_l,x_{m_2}]^b}{x_{m_2}^a}-\frac{[x_l,x_m]^b}{x_{m_3}^a}\notag\\
&+\frac{[x_l,x_{m_{12}}]^b}{x_{m_{12}}^a}+\frac{[x_l,x_{m_1}]^b}{x_{m_{13}}^a}
+\frac{[x_l,x_{m_2}]^b}{x_{m_{23}}^a}-\frac{[x_l,x_{m_{12}}]^b}{x_{m_4}^a}\notag\\
=&[x_l,x_m]^b\big(\frac{1}{x_m^a}-\frac{1}{x_{m_3}^a}\big)+[x_l, x_{m_1}]^b
\big(\frac{1}{x_{m_{13}}^a}-\frac{1}{x_{m_1}^a}\big)
+[x_l, x_{m_2}]^b\big(\frac{1}{x_{m_{23}}^a}-\frac{1}{x_{m_2}^a}\big)\notag\\
&+[x_l, x_{m_{12}}]^b\big(\frac{1}{x_{m_{12}}^a}-\frac{1}{x_{m_4}^a}\big)\notag\\
=&\frac{x_l^bx_m^b}{(x_l,x_m)^b}\big(\frac{1}{x_m^a}-\frac{1}{x_{m_3}^a}\big)+\frac{x_l^bx_{m_1}^b}{(x_l, x_{m_1})^b}
\big(\frac{1}{x_{m_{13}}^a}-\frac{1}{x_{m_1}^a}\big)
+\frac{x_l^bx_{m_2}^b}{(x_l, x_{m_2})^b}\big(\frac{1}{x_{m_{23}}^a}-\frac{1}{x_{m_2}^a}\big)\notag\\
&+\frac{x_l^bx_{m_{12}}^b}{(x_l, x_{m_{12}})^b}\big(\frac{1}{x_{m_{12}}^a}-\frac{1}{x_{m_4}^a}\big)\notag\\
=&\frac{x_l^b}{(x_l,x_m)^b}\big(x_m^{b-a}-x_{m_1}^{b-a}-x_{m_2}^{b-a}+x_{m_{12}}^{b-a}
-\frac{x_m^b}{x_{m_3}^a}+\frac{x_{m_1}^b}{x_{m_{13}}^a}+\frac{x_{m_2}^b}{x_{m_{23}}^a}-\frac{x_{m_{12}}^b}{x_{m_4}^a}\big)
.\label{eq 3.40}
\end{align}
\end{small}
Since $S$ satisfies the condition $\mathcal G$, we have
$[x_{m_i}, x_{m_j}]=x_m$ for $1\le i<j\le 3$, $\{x_{m_{12}}, x_{m_{13}}\}\subseteq G_S(x_{m_1})$
and $\{x_{m_{13}}, x_{m_{23}}\}\subseteq G_S(x_{m_3})$.
This implies that
\begin{align}\label{eq 3.41}
\frac{x_{m_2}}{x_{m_{23}}}=\frac{x_m}{x_{m_3}}
=\frac{x_{m_1}}{x_{m_{13}}}=\frac{x_{m_{12}}}{x_{m_4}}:=\lambda
\end{align}
and
\begin{align}\label{eq 3.42}
\frac{x_m}{x_{m_1}}=\frac{x_{m_2}}{x_{m_{12}}}
=\frac{x_{m_3}}{x_{m_{13}}}=\frac{x_{m_{23}}}{x_{m_4}}:=\delta.
\end{align}
From (\ref{eq 3.40}) to (\ref{eq 3.42}), we obtain that
\begin{small}
\begin{align}
\sum\limits_{x_r|x_m}\frac{c_{rm}[x_l,x_r]^b}{x_r^a}=&
\frac{x_l^b}{(x_l,x_m)^b}\big(x_m^{b-a}-x_{m_1}^{b-a}-x_{m_2}^{b-a}+x_{m_{12}}^{b-a}
-\frac{x_m^b}{x_{m_3}^a}+\frac{x_{m_1}^b}{x_{m_{13}}^a}+\frac{x_{m_2}^b}{x_{m_{23}}^a}-\frac{x_{m_{12}}^b}{x_{m_4}^a}\big)\notag\\
=&\frac{x_l^b}{(x_l,x_m)^b}\big(x_m^{b-a}-x_{m_1}^{b-a}-x_{m_2}^{b-a}+x_{m_{12}}^{b-a}
-\lambda ^ax_m^{b-a}+\lambda ^ax_{m_1}^{b-a}+\lambda ^ax_{m_2}^{b-a}-\lambda ^ax_{m_{12}}^{b-a}\big)\notag\\
=&\frac{x_l^b}{(x_l,x_m)^b}(1-\lambda^a)\big(x_m^{b-a}-x_{m_1}^{b-a}-x_{m_2}^{b-a}+x_{m_{12}}^{b-a}\big)\notag\\
=&\frac{x_l^b}{(x_l,x_m)^b}(1-\lambda^a)\big(x_m^{b-a}-\big(\frac{x_m}{\delta}\big)^{b-a}-x_{m_2}^{b-a}+\big(\frac{x_{m_2}}{\delta}\big)^{b-a}\big)\notag\\
=&\frac{x_l^b}{(x_l,x_m)^b}(1-\lambda^a)(\delta^{b-a}-1)\Big(\big(\frac{x_m}{\delta}\big)^{b-a}-\big(\frac{x_{m_2}}{\delta}\big)^{b-a}\big).\label{eq 3.43}
\end{align}
\end{small}
By (\ref{eq 2.40}), (\ref{eq 3.41}) and (\ref{eq 3.42}), one has
\begin{align}\label{eq 3.44}
\alpha_{\frac{1}{\xi_a}}(x_m)=(1-\lambda^a)\delta^a(\delta^a-1)\frac{\big(\frac{x_m}{\delta}\big)^a-\big(\frac{x_{m_2}}{\delta}\big)^a}{x_m^ax_{m_2}^a}.
\end{align}
It follows from (\ref{eq 3.43}) and (\ref{eq 3.44}) that
\begin{align*}
h(l,m)=\frac{x_m^a}{x_s^a}\frac{x_{m_2}^a}{\delta^a}\frac{x_l^b}{(x_l,x_m)^b}\frac{\big(\frac{x_m}{\delta}\big)^{b-a}
-\big(\frac{x_{m_2}}{\delta}\big)^{b-a}}{\big(\frac{x_m}{\delta}\big)^a
-\big(\frac{x_{m_2}}{\delta}\big)^a}\frac{\delta^{b-a}-1}{\delta^a-1}\in\mathbb Z
\end{align*}
as desired.

{\sc Case 3}. $(x_l, x_m)|x_{m_1}$, $(x_l, x_m)\nmid x_{m_2}$ and $(x_l, x_m)\nmid x_{m_3}$.
We should consider the following two cases:

{\sc Subcases 3-1}. $(x_l, x_m)\ne x_{m_1}$. Obviously, $(x_l, x_m)|(x_l, x_{m_1})|(x_l, x_m)$.
Thus
\begin{align}
(x_l, x_m)=(x_l, x_{m_1}).\label{eq3.45}
\end{align}
Notice that $(x_l, x_m)\nmid x_{m_2}$, $(x_l, x_m)\nmid x_{m_3}$,
$(x_l, x_m)\nmid x_{m_{12}}$, $(x_l, x_m)\nmid x_{m_{13}}$, $\{x_{m_2},x_{m_3}\}\subseteq G_S(x_m)$ and
$\{x_{m_{12}},x_{m_{13}}\}\subseteq G_S(x_{m_1})$. Since $A_S((x_l,x_m),x_m)(\subseteq S)$ satisfies
the condition $\mathcal G$, from Lemma \ref{Lemma 2.10}, we have
\begin{align}\label{eq3.46}
[(x_l, x_m),x_{m_2}]=x_m=[(x_l, x_m),x_{m_3}]
\end{align}
and
\begin{align}\label{eq3.47}
[(x_l, x_m),x_{m_{12}}]=x_{m_1}=[(x_l, x_m),x_{m_{13}}].
\end{align}
So from \eqref{eq3.46} and \eqref{eq3.47}, we get that
\begin{align}\label{eq3.48}
[x_l, x_{m_2}]=[x_l,(x_l, x_m),x_{m_2}]=[x_l,x_m]=[x_l,(x_l, x_m),x_{m_3}]=[x_l, x_{m_3}]
\end{align}
and
\begin{align}\label{eq3.49}
[x_l, x_{m_{12}}]=[x_l,(x_l, x_m),x_{m_{12}}]=[x_l,x_{m_1}]=[x_l,(x_l, x_m),x_{m_{13}}]=[x_l, x_{m_{13}}].
\end{align}
Hence by \eqref{eq2.15}, \eqref{eq2.16} \eqref{eq3.48} and \eqref{eq3.49}, we have
\begin{align}
&[x_l, x_{m_2}]=[x_l, x_m]=[x_l, x_{m_3}]=[x_l, x_{m_{23}}],\label{eq3.50}\\
&[x_l, x_{m_{12}}]=[x_l, x_{m_1}]=[x_l, x_{m_{13}}]=[x_l, x_{m_4}].\label{eq3.51}
\end{align}
It then follows from (\ref{eq3.38}), \eqref{eq 3.42} , \eqref{eq3.45}, \eqref{eq3.50} and \eqref{eq3.51} that
\begin{small}
\begin{align}
&\sum\limits_{x_r|x_m}\frac{c_{rm}[x_l,x_r]^b}{x_r^a}\notag\\
=&\frac{[x_l, x_m]^b}{x_m^a}-\frac{[x_l, x_{m_1}]^b}{x_{m_1}^a}
-\frac{[x_l, x_m]^b}{x_{m_2}^a}-\frac{[x_l, x_m]^b}{x_{m_3}^a}\notag\\
&+\frac{[x_l, x_{m_1}]^b}{x_{m_{12}}^a}+\frac{[x_l, x_{m_1}]^b}{x_{m_{13}}^a}
+\frac{[x_l, x_m]^b}{x_{m_{23}}^a}-\frac{[x_l, x_{m_1}]^b}{x_{m_4}^a}\notag\\
=&[x_l, x_m]^b\big(x_m^{-a}-x_{m_2}^{-a}-x_{m_3}^{-a}+x_{m_{23}}^{-a}\big)
+[x_l, x_{m_1}]^b\big(-x_{m_1}^{-a}+x^{-a}_{m_{12}}+x^{-a}_{m_{13}}-x^{-a}_{m_4}\big)\notag\\
=&\frac{x_l^bx_m^b}{(x_l, x_m)^b}\big(x_m^{-a}-x_{m_2}^{-a}-x_{m_3}^{-a}+x_{m_{23}}^{-a}\big)
+\frac{x_l^bx_{m_1}^b}{(x_l, x_{m_1})^b}\big(-x_{m_1}^{-a}+x^{-a}_{m_{12}}+x^{-a}_{m_{13}}-x^{-a}_{m_4}\big)\notag\\
=&\frac{x_l^b}{(x_l, x_m)^b}\big(x_m^b(x_m^{-a}-x_{m_2}^{-a}-x_{m_3}^{-a}+x_{m_{23}}^{-a})
+x_{m_1}^b(-x_{m_1}^{-a}+x^{-a}_{m_{12}}+x^{-a}_{m_{13}}-x^{-a}_{m_4})\big)\notag\\
=&\frac{x_l^b}{(x_l, x_m)^b}\big(x_m^b(x_m^{-a}-x_{m_2}^{-a}-x_{m_3}^{-a}+x_{m_{23}}^{-a})
+x_{m_1}^b\delta^a(-x_m^{-a}+x^{-a}_{m_2}+x^{-a}_{m_3}-x^{-a}_{m_{23}})\big)\notag\\
=&\frac{x_l^b}{(x_l, x_m)^b}(x_m^b-x^b_{m_1}\delta^a)(x_m^{-a}-x_{m_2}^{-a}-x_{m_3}^{-a}+x_{m_{23}}^{-a})\notag\\
=&\frac{x_m^ax_l^b}{(x_l, x_m)^b}(x_m^{b-a}-x_{m_1}^{b-a})(x_m^{-a}-x_{m_2}^{-a}-x_{m_3}^{-a}+x_{m_{23}}^{-a}).
\label{eq3.52}
\end{align}
\end{small}
Using (\ref{eq 2.40}) and (\ref{eq 3.42})  gives us that
\begin{align}\label{eq3.53}
\alpha_{\frac{1}{\xi_a}}(x_m)=x^{-a}_{m_1}(x^a_{m_1}-x_m^a)(x_m^{-a}-x_{m_2}^{-a}-x_{m_3}^{-a}+x_{m_{23}}^{-a}).
\end{align}
It follows from (\ref{eq3.3}), (\ref{eq3.52}) and (\ref{eq3.53}) that
\begin{align*}
h(l,m)=x^a_{m_1}\big(\frac{x_m}{x_s}\big)^a\big(\frac{x_l}{(x_l,x_m)}\big)^b\frac{x_m^{b-a}-x_{m_1}^{b-a}}{x_{m_1}^a-x_m^a}\in\mathbb Z
\end{align*}
as asserted.

{\sc Subcases 3-2}. $(x_l, x_m)=x_{m_1}$. Then $x_{m_1}|x_l$ and so $x_{m_{12}}|x_l$,
$x_{m_{13}}|x_l$ and $x_{m_4}|x_l$. Since $x_m$ satisfies the condition $\mathcal G$,
we have $[x_{m_1}, x_{m_3}]=x_m$. It follows that
$$
\frac{x_m}{x_{m_1}}=\frac{x_{m_3}}{x_{m_{13}}}.
$$
Thus from \eqref{eq 3.31}, we get that
\begin{align}
[x_l,x_m]=\frac{x_lx_m}{(x_l,x_m)}=\frac{x_lx_m}{x_{m_1}}
=\frac{x_lx_{m_3}}{x_{m_{13}}}=\frac{x_lx_{m_3}}{(x_l,x_{m_3})}=[x_l,x_{m_3}].\label{eq3.54}
\end{align}
Therefore by \eqref{eq 3.34}, \eqref{eq 3.35} and \eqref{eq3.54}, one has
\begin{align}\label{eq3.55}
[x_l,x_{m_2}]=[x_l,x_m]=[x_l,x_{m_3}]=[x_l, x_{m_{23}}].
\end{align}
One can deduce from \eqref{eq 3.30}, (\ref{eq3.38}), \eqref{eq 3.42} and \eqref{eq3.55} that
\begin{align}
&\sum\limits_{x_r|x_m}\frac{c_{rm}[x_l,x_r]^b}{x_r^a}\notag\\
=&\frac{[x_l,x_m]^b}{x_m^a}-\frac{x_l^b}{x_{m_1}^a}-\frac{[x_l,x_m]^b}{x_{m_2}^a}-\frac{[x_l,x_m]^b}{x_{m_3}^a}
+\frac{x_l^b}{x_{m_{12}}^a}+\frac{x_l^b}{x_{m_{13}}^a}+\frac{[x_l,x_m]^b}{x_{m_{23}^a}}-\frac{x_l^b}{x_{m_4}^a}\notag\\
=&[x_l,x_m]^b(x_m^{-a}-x_{m_2}^{-a}-x_{m_3}^{-a}+x_{m_{23}}^{-a})
+x_l^b(x^{-a}_{m_{12}}+x_{m_{13}}^{-a}-x_{m_1}^{-a}-x_{m_4}^{-a}\big)\notag\\
=&\frac{x_l^bx_m^b}{(x_l,x_m)^b}\big(x_m^{-a}-x_{m_2}^{-a}-x_{m_3}^{-a}+x_{m_{23}}^{-a}\big)
+x_l^b\big(x^{-a}_{m_{12}}+x_{m_{13}}^{-a}-x_{m_1}^{-a}-x_{m_4}^{-a}\big)\notag\\
=&\frac{x_l^bx_m^b}{x_{m_1}^b}(x_m^{-a}-x_{m_2}^{-a}-x_{m_3}^{-a}+x_{m_{23}}^{-a})
+x_l^b\big(x^{-a}_{m_{12}}+x_{m_{13}}^{-a}-x_{m_1}^{-a}-x_{m_4}^{-a}\big)\notag\\
=&\frac{x_l^bx_m^b}{x_{m_1}^b}(x_m^{-a}-x_{m_2}^{-a}-x_{m_3}^{-a}+x_{m_{23}}^{-a})
-x_l^b\delta^a(x_m^{-a}-x_{m_2}^{-a}-x_{m_3}^{-a}+x_{m_{23}}^{-a})\notag\\
=&x_l^b\cdot\frac{x_m^b-x_m^ax_{m_1}^{b-a}}{x_{m_1}^b}
(x_m^{-a}-x_{m_2}^{-a}-x_{m_3}^{-a}+x_{m_{23}}^{-a}).\label{eq3.56}
\end{align}
Combining \eqref{eq3.53} and \eqref{eq3.56} gives us that
\begin{align*}
h(l,m)=x_{m_1}^a\Big(\frac{x_m}{x_s}\Big)^a\Big(\frac{x_l}{(x_l,x_m)}\Big)^b
\frac{x_m^{b-a}-x_{m_1}^{b-a}}{x_{m_1}^a-x_m^a}\in\mathbb Z
\end{align*}
as claimed.

{\sc Case 4}. $x_m|x_l$. It is easy to check that $(x_l, x_m)=x_m$, and for any integers
$r,i$ and $j$ with $1\le r\le 4$ and $1\le i<j\le 3$, one has $(x_l, x_{m_r})=x_{m_r}$,
$(x_l, x_{m_{ij}})=x_{m_{ij}}$ and
$$
[x_l, x_m]=[x_l, x_{m_r}]=[x_l, x_{m_{ij}}]=x_l.
$$
So by (\ref{eq3.38}), one derives that
\begin{align}\label{eq3.57}
\sum\limits_{x_r|x_m}\frac{c_{rm}[x_l,x_r]^b}{x_r^a}=x_l^b
(x_m^{-a}-x_{m_1}^{-a}-x_{m_2}^{-a}-x_{m_3}^{-a}
+x_{m_{12}}^{-a}+x_{m_{13}}^{-a}+x_{m_{23}}^{-a}-x_{m_4}^{-a}).
\end{align}
Notice that $x_s|x_m$ and $x_m|x_l$. Thus $x_s|x_l$.
From (\ref{eq3.3}), (\ref{eq 2.40}) and (\ref{eq3.57}),
it deduces that
$$
h(l,m)=\frac{x_l^b}{x_s^a}=x_l^{b-a}\Big(\frac{x_l}{x_s}\Big)
^a\in\mathbb Z
$$
as desired. This finishes the proof of Lemma \ref{Lemma 2.18}.
\end{proof}

Finally, we present the proof of Theorem \ref{Theorem 1.3}.

{\it Proof of Theorem \ref{Theorem 1.3}.} Let $S$ be a gcd-closed
set satisfying the condition $\mathcal G$ and $\max_{x\in S}|G_S(x)|=3$.
For arbitrary integers $l$ and $s$ with $1\le l,s\le n$,
from Lemma \ref{Lemma 2.9}, we have
\begin{align*}
\left([S^b][S^a]^{-1}\right)_{ls}=&\sum_{r=1}^n[x_l, x_r]^b\frac{1}{x_r^ax_s^a}
\sum\limits_{x_r|x_m\atop x_s|x_m}\frac{c_{rm}c_{sm}}{\alpha_{\frac{1}{\xi_a}}(x_m)}\\
=&\sum\limits_{x_s|x_m}c_{sm}\frac{1}{x_s^a\alpha_{\frac{1}{\xi_a}}(x_m)}\sum\limits_{x_r|x_m}\frac{c_{rm}[x_l, x_r]^b}{x_r^a}\\
=&\sum\limits_{x_s|x_m}c_{sm}h(l,m),
\end{align*}
where $h(l,m)$ is defined as in (\ref{eq3.3}).
To show the required result, it suffices to prove that for any positive integers $m$ such
that $x_s|x_m$, one has $h(l,m)\in\mathbb Z$ which will be done as follows.
Consider the following three cases:

{\sc Case 1}. $|G_S(x_m)|=1$. Let $G_S(x_m)=\{x_{m_0}\}$.
By Lemmas \ref{Lemma 2.5}, \ref{Lemma 2.6} and \ref{Lemma 2.12} (ii),
we can arrive at that
$$
h(l, m)=\frac{x^a_{m_0}[x_l, x_m]^b-x_m^a[x_l, x_{m_0}]^b}{x_s^a(x_{m_0}^a-x_m^a)}\in\mathbb Z.
$$

{\sc Case 2}. $|G_S(x_m)|=2$. Let $G_S(x_m)=\{x_{m_{01}}, x_{m_{02}}\}$ and
$(x_{m_{01}}, x_{m_{02}})=x_{m_{03}}$.
It follows from Lemmas \ref{Lemma 2.5}, \ref{Lemma 2.6} and
\ref{Lemma 2.13} (ii) that
\begin{align*}
h(l,m)
=&\frac{\frac{[x_l,x_m]^b}{x_m^a}-\frac{[x_l,x_{m_{01}}]^b}
{x_{m_{01}}^a}-\frac{[x_l,x_{m_2}]^b}{x_{m_{02}}^a}+
\frac{[x_l,x_{m_{03}}]^b}{x_{m_{03}}^a}}{x_s^a\big(\frac{1}{x_m^a}
-\frac{1}{x_{m_{01}}^a}-\frac{1}{x_{m_{02}}^a}+\frac{1}{x_{m_{03}}^a}\big)}\\
=&\frac{x_m^a[x_l,x_{m_{03}}]^b+x_{m_{03}}^a[x_l,x_m]^b-x_{m_{02}}^a
[x_l,x_{m_{01}}]^b-x_{m_{01}}^a[x_l,x_{m_{02}}]^b}
{x_s^a({x_m^a+x_{m_{03}}^a-x_{m_{01}}^a-x_{m_{02}}^a})}\in {\mathbb Z}.
\end{align*}

{\sc Case 3}. $|G_S(x_m)|=3$. Then by Lemma \ref{Lemma 2.18},
we know that $h(l,m)$ is an integer.

This concludes the proof of Theorem \ref{Theorem 1.3}.
\qed


\end{document}